\providecommand{\U}[1]{\protect\rule{.1in}{.1in}}
\newtheorem{theorem}{Theorem}
\newtheorem{algorithm}[theorem]{Algorithm}
\newcounter{lemmac}
\newtheorem{lemma}[lemmac]{Lemma}
\newtheorem{lem}{Lemma}[section]
\newcounter{propos}
\newtheorem{proposition}[propos]{Proposition}
\newcounter{assumec}
\newtheorem{assume}[assumec]{Assumption}
\renewcommand{\theassume}{\Alph{assume}}
\newenvironment{proof}[1][Proof]{\textbf{#1.} }{\ \rule{0.5em}{0.5em}}
\begin{document}

\title{ Dynamic Spatial Panel Models: Networks, Common Shocks, and Sequential Exogeneity}
\author{Guido M. Kuersteiner\thanks{Department of Economics, University of Maryland,
College Park, MD 20742, e-mail: kuersteiner@econ.umd.edu} and Ingmar R.
Prucha\thanks{Department of Economics, University of Maryland, College Park,
MD 20742, e-mail: prucha@econ.umd.edu}}
\date{First Version: July 17, 2015; Revised: February 3, 2018}
\maketitle

\begin{abstract}
This paper considers a class of GMM estimators for general dynamic panel
models, allowing for weakly exogenous covariates and cross sectional
dependence due to spatial lags, unspecified common shocks and time-varying
interactive effects. We significantly expand the scope of the existing
literature by allowing for endogenous spatial weight matrices without imposing
any restrictions on how the weights are generated. An important area of
application is in social interaction and network models where our
specification can accommodate data dependent network formation. We consider an
exemplary social interaction model and show how identification of the
interaction parameters is achieved through a combination of linear and
quadratic moment conditions. For the general setup we develop an orthogonal
forward differencing transformation to aid in the estimation of factor
components while maintaining orthogonality of moment conditions. This is an
important ingredient to a tractable asymptotic distribution of our estimators.
In general, the asymptotic distribution of our estimators is found to be mixed
normal due to random norming. However, the asymptotic distribution of our test
statistics is still chi-square.

\end{abstract}

\newpage

\section{Introduction\protect\footnote{We gratefully acknowledge financial support
from the National Institute of Health through the SBIR grant R43 AG027622 and
R44 AG027622. We thank David M. Drukker, Stata, for his very helpful
collaboration on computations issues. Earlier versions of the paper were
presented at the International Panel Data Conference 2013, London, the
Econometric Workshop 2104, Shanghai, Joint Statistical Meetings 2014, Boston,
Labor Workshop 2014, Laax, VII World Conference of the Spatial Econometrics
Association, 2014, Zurich, 14th International Workshop of Spatial Econometrics
and Statistics 2015, Paris, as well as at seminars at Michigan State
University, Penn State University, Columbia University, University of
Rochester, Chicago Booth, University of Michigan, Colorado University and
Harvard-MIT. We would like to thank the participants of those conferences and
seminars, as well as the editor and referees for their helpful comments.}}

Network and social interaction models have recently attracted attention both
in empirical work as well as in econometric theory. In this paper we develop
Generalized Methods of Moments (GMM) estimators for panel data with network
structure. Our focus is on estimating linear models for outcome variables that
may depend on outcomes and covariates of others in the network. We assume that
the network structure is observed but do not impose any explicit restrictions
on the process that generates the network. We allow for the network to change
dynamically and being formed endogenously. Implicit restrictions we impose are
in the form of high level assumptions about the convergence of sample moments.
These assumptions impose implicit restrictions on the amount of
cross-sectional dependence one can allow for in covariates and on how dense
the network can be. The assumptions are similar to high level assumptions
imposed in Kuersteiner and Prucha (2013). Recent work on the estimation of
models with endogenous weights includes Goldsmith-Pinkham and Imbens (2013),
Han and Lee (2016) who propose Bayesian methods, Xi and Lee (2015), Shi and
Lee (2017), Xi, Lee and Yu (2017) proposing quasi maximum likelihood
estimators, Kelejian and Piras (2014) proposing GMM and Johnson and Moon
(2017) using a control function approach. All these papers assume specific
generating mechanisms for the network formation process, while our approach
remains completely agnostic about the way the network is formed.

In addition to allowing for endogenous network formation our work extends the
estimation theory for dynamic panel data models with higher order spatial lags
to allow for interactive fixed effects, unobserved common factors affecting
covariates and error terms and sequentially (rather than only strictly)
exogenous regressors.\footnote{Endogenous regressors in addition to spatial
lags of the l.h.s. variable can in principle be accommodated as well, at the
cost of additional notation to separate covariates that can be used as
instruments from those that cannot. We do not explicitly account for this
possibility to save on notation.} Our treatment of common shocks, which are
accounted for by some underlying $\sigma$-field, but are otherwise left
unspecified is in line with Andrews (2005) and Ahn et al. (2013). However, in
contrast to those papers, and as in Kuersteiner and Prucha (2013), we do not
maintain that the data are conditionally i.i.d. The common shocks may effect
all variables, including the common factors appearing in the interactive fixed
effects. Our analysis assumes the availability of data indexed by
$i=1,\ldots,n$ in the cross sectional dimension and $t=1,\ldots,T$. Our focus
is on short panels with $T $ fixed. Our treatment of interactive effects is
related to the large literature on panel models including Phillips and Sul
(2003, 2007), Bai and Ng (2006a,b), Pesaran (2006), Bai (2009, 2013), Moon and
Weidner (2013a,b) and is most closely related to the fixed $T$ GMM\ estimators
of Ahn et al. (2013).

Our work also relates to the spatial literature dating back to Whittle (1954)
and Cliff and Ord (1973, 1981), and the GMM framework based on linear and
quadratic moment conditions introduced in Kelejian and Prucha (1998,1999).
Dynamic panel data models that allow for spatial interactions in terms of
spatial lags have recently been considered by Mutl (2006), and Yu, de Jong and
Lee (2008, 2012), Elhorst (2010), Lee and Yu (2014) and Su and Yang (2014).
Papers allowing for both cross sectional interactions in terms of spatial lags
and for common shocks include Chudik and Pesaran (2013), Bai and Li (2013),
and Pesaran and Torsetti (2011). All of these papers assume that both $n$ and
$T$ tend to infinity, and the latter two papers only consider a static setup.

With the data and multiplicative factors allowed to depend on common shocks,
our asymptotic theory needs to accommodate objective functions that are
stochastic in the limit. For that purpose we extend classical results on the
consistency of M-estimators in, e.g., Gallant and White (1988), Newey and
McFadden (1997) and Poetscher and Prucha (1997) to stochastic objective
functions. The CLT developed in this paper extends our earlier results in
Kuersteiner and Prucha (2013) to the case of linear-quadratic moment
conditions. Quadratic moments play a key role in identifying cross-sectional
interaction parameters but pose major challenges in terms of tractability of
the weight matrix which in general depends on hard to estimate cross-sectional
sums of moments. We achieve significant simplifications and tractability by
developing a quasi-forward differencing transformation to eliminate
interactive effects while ensuring orthogonality of the transformed moments.
This transformation contains the Helmert transformation as a special case. We
also provide general results regarding the variances and covariances of linear
quadratic forms of forward differences.

The paper is organized as follows. Section \ref{Example} illustrates the main
results of the paper, including identification, estimation and inference with
a simplified version of the model. Section \ref{Model} presents the models and
theoretical results at the full level of generality we allow for. Concluding
remarks are given in Section \ref{Conclusion}. Appendix
\ref{Formal Assumptions} contains formal assumptions, Appendix \ref{TRVCLQ}
develops efficient quasi forward differencing and derives sufficient
conditions for the diagonalization of the optimal weight matrix and Appendix
\ref{APPPR} contains proofs. A supplementary appendix available separately
provides additional details for the proofs.

\section{Example and Motivation\label{Example}}

In the following we specify an exemplary social interactions model, and
discuss identification and estimation strategies. The example is aimed at
motivating the general cross sectional interaction model considered in Section
3. This model covers both social interaction and spatial models as the leading cases.

We consider the following simple linear social interactions model for $n$
individuals and periods $t=1,\ldots,T$,
\begin{equation}
y_{t}=\lambda My_{t}+Z_{t}\beta+\varepsilon_{t}=W_{t}\delta+\varepsilon
_{t},\qquad\varepsilon_{t}=\mu+u_{t},\label{Model1}%
\end{equation}
where $Z_{t}=[z_{t}^{1},Mz_{t}^{1}]$ is an $n\times p_{z}$ matrix, $M$ is a
$n\times n$ network interaction matrix, $\varepsilon_{t}=\left[
\varepsilon_{1t},...,\varepsilon_{nt}\right]  ^{\prime}$ denotes the vector of
regression disturbances, $\mu=[\mu_{1},\ldots,\mu_{n}]^{\prime}$ denotes the
vector of unobserved unit specific effects, $u_{t}=\left[  u_{1t}%
,...,u_{nt}\right]  ^{\prime}$ denotes the vector of unobserved idiosyncratic
disturbances, $W_{t}=\left[  My_{t},Z_{t}\right]  $, and $\delta=\left[
\lambda,\beta^{\prime}\right]  ^{\prime}$ is the vector of unknown parameters
with $\left\vert \lambda\right\vert <1$. At times we will denote the true
parameter values more explicitly as $\delta_{0}=\left[  \lambda_{0},\beta
_{0}^{\prime}\right]  ^{\prime}$. Peer or network effects are captured by
$\lambda My_{t}$ while $Z_{t}\beta$ controls for exogenous characteristics.
Let $z_{t}=[z_{t}^{1},\zeta]$ by an $n\times k_{z}$ matrix where $z_{t}^{1}$
is a matrix of time varying and $\zeta$ is a matrix of time invariant strictly
exogenous variables. All variables are allowed to vary with the
cross-sectional sample size $n$, although we suppress this dependence for
notational convenience. In addition to $y_{t}$ and $z_{t}$ we observe
relationships between individuals through the indicator variable $d_{ij}$
where $d_{ij}=1$ if individuals $i$ and $j$ are related and $d_{ij}=0 $
otherwise. Examples of relationships include common group membership or
individual friendships. Let $\sum_{j=1}^{N}d_{ij}=n_{i}$ be the number of
relationships of $i$ and define the $n\times n$ matrix $M=\left(
m_{ij}\right)  $ with $m_{ij}=d_{ij}/n_{i}.$

To simplify the exposition we focus on the case where $T=2$. Our interest is
in the parameters of the outcome equation, not in the process that generates
the observed network interaction matrix $M$. Correspondingly our estimators
are invariant to the network formation process, provided certain regularity
conditions on $d_{ij}$ and $m_{ij}$ are satisfied. However, to be more
specific for this particular example the elements $d_{ij}$ of the relationship
matrix $D$ are taken to be functions of $\zeta$, $\mu$ and $\mathfrak{\upsilon
}$, where $\mathfrak{\upsilon}=(\mathfrak{\upsilon}_{ij})$ is unobserved.
Furthermore, to keep the example simple, we assume for now that conditionally
on $z_{1}$, $z_{2}$ and $\mu$\textbf{\ }the elements of $u=(u_{1}^{\prime
},u_{2}^{\prime})^{\prime}$ are mutually independent and identically
distributed $(0,\sigma^{2})$, but not necessarily independent of
$\mathfrak{\upsilon}$. The unit specific effects $\mu$ are left unspecified
and can depend on all other observed and unobserved variables in arbitrary ways.

Since the elements of $D$ and thus those of $M$ are allowed to depend on $\mu$
and $\mathfrak{\upsilon}$, the network interaction matrix $M$ is allowed to be
correlated with the model disturbances $\varepsilon_{1}$ and $\varepsilon_{2}%
$. Therefore $M$ may be endogenous. More specific specifications of $M$ will
be discussed below. Observe that our setup implies the following conditional
moment condition, which is critical for our identification
strategy:\footnote{The conditional i.i.d. assumption on the $u_{it}$ will be
relaxed in Section 3 in Assumption \ref{GA1}. For purposes of comparison note
that under the conditional i.i.d. assumption condition (\ref{Cond_Mom_Res}) is
equivalent to $E\left[  u_{it}|z_{1},z_{2},u_{t-1},\mu,u_{-i,t}\right]  =0$.}
\begin{equation}
E\left[  u_{it}|z_{1},z_{2},\mu\right]  =0.\label{Cond_Mom_Res}%
\end{equation}

Applying a Helmert transformation to (\ref{Model1}) to eliminate the
individual specific effects from the disturbance process yields%
\begin{equation}
y_{1}^{+}=\lambda My_{1}^{+}+Z_{1}^{+}\beta+u_{1}^{+}=W_{1}^{+}\delta
+u_{1}^{+},\label{Model2}%
\end{equation}
with $y_{1}^{+}=(y_{2}-y_{1})/\sqrt{2\sigma^{2}}$, etc., and $u_{1}%
^{+}=\varepsilon_{1}^{+}$. The existing literature on spatial panel data
models eliminates individual specific effects by subtracting unit sample
averages. As will be seen below, applying a Helmert transformation, or the
generalized Helmert transformation introduced below, greatly simplifies the
correlation structure between moment conditions. To keep the presentation of
the example simply, we take $\sigma^{2}=1$, and defer the discussion of the
general case to the next section. The reduced form of (\ref{Model2}) is given
by
\begin{equation}
y_{1}^{+}=(I-\lambda M)^{-1}[Z_{1}^{+}\beta+u_{1}^{+}].\label{Model2_Reduced}%
\end{equation}

\subsection{Moment Conditions}

We propose GMM estimators exploiting restrictions implied by
(\ref{Cond_Mom_Res}). Our estimators are based on both linear and quadratic
moment conditions. Results on the identification of the true parameters by
those moment conditions will be discussed below.

Let $h^{r}=(h_{i}^{r})$, $r=1,...,p$, be a set of $n\times1$ instrument
vectors, and let $A^{r}=(a_{ij}^{r})$, $r=1,...,q$, be a set of $n\times n$
symmetric matrices with zero diagonal elements, where the elements of $h^{r}$
and $A^{r}$ are measurable w.r.t. $z_{1},z_{2},\mu$. It follows from
(\ref{Cond_Mom_Res}) that
\begin{equation}
E\left[  h^{r\prime}u_{1}^{+}\right]  =0\text{,\qquad}E\left[  u_{1}^{+\prime
}A^{r}u_{1}^{+}\right]  =0\text{. }\label{Ortho}%
\end{equation}

Let $u_{1}^{+}(\delta)=y_{1}^{+}-W_{1}^{+}\delta$ denote the vector of
transformed model errors, and let $\overline{m}_{n,\mathfrak{l}}\left(
\delta\right)  =n^{-1/2}\left[  h^{1^{\prime}}u_{1}^{+}\left(  \delta\right)
,...,h^{p^{\prime}}u_{1}^{+}\left(  \delta\right)  \right]  $such that the
linear moment condition is $E\left[  \overline{m}_{n,\mathfrak{l}}\left(
\delta_{0}\right)  \right]  =0$. Similarly, let $\overline{m}_{n,\mathfrak{q}%
}\left(  \delta\right)  =n^{-1/2}\left[  u_{1}^{+}(\delta)^{\prime}A_{1}%
u_{1}^{+}(\delta),...,u_{1}^{+}(\delta)^{\prime}A_{q}u_{1}^{+}(\delta)\right]
^{\prime}$, leading to the quadratic moment conditions $E\left[  \overline
{m}_{n,\mathfrak{q}}\left(  \delta_{0}\right)  \right]  =0$. The linear and
quadratic moment functions can be stacked as $\overline{m}_{n}(\delta)=\left[
\overline{m}_{n,\mathfrak{l}}(\delta)^{\prime},\overline{m}_{n,\mathfrak{q}%
}(\delta)^{\prime}\right]  ^{\prime}$ and the moment conditions written more
compactly as
\begin{equation}
E\left[  \overline{m}_{n}(\delta_{0})\right]  =0.\label{Combined_Mom_Cond}%
\end{equation}
An important theoretical contribution of this paper is to derive conditions
under which the linear and quadratic moments are uncorrelated. This is
achieved, in particular, by using the adopted forward transformation and
matrices $A^{r}$ with zero diagonal elements. Let $V_{n}^{h}=n^{-1}\sum
_{i=1}^{n}h_{i}^{\prime}h_{i}$ with $h_{i}=[h_{i1},\ldots,h_{ip}]$ and
$V_{n}^{a}=n^{-1}\sum_{i=1}^{n}\sum_{j=1}^{n}{a}_{ij}{a}_{ij}^{\prime}$ with
$a_{ij}=[a_{ij,1},\ldots,a_{ij,q}]$. It can be shown that $E\left[
\overline{m}_{n}(\delta_{0})\overline{m}_{n}(\delta_{0})^{\prime}\right]
=\tilde{\Xi}_{n}$ where $\tilde{\Xi}_{n}=\operatorname*{diag}\left(  V_{n}%
^{h},2V_{n}^{a}\right)  $. The GMM estimator for $\delta_{0}$ is defined as
\begin{align}
\delta_{n}  &  =\arg\min_{\delta\in\underline{\Theta}_{\delta}}n^{-1}%
\overline{m}_{n}(\delta)^{\prime}\tilde{\Xi}_{n}\overline{m}_{n}%
(\delta)\label{GMM_Object}\\
&  =\arg\min_{\delta\in\underline{\Theta}_{\delta}}n^{-1}\left[  \overline
{m}_{n,\mathfrak{l}}(\delta)^{\prime}\left(  V_{n}^{h}\right)  ^{-1}%
\overline{m}_{n,\mathfrak{l}}(\delta)+\overline{m}_{n,\mathfrak{q}}%
(\delta)^{\prime}\left(  2V_{n}^{a}\right)  ^{-1}\overline{m}_{n,\mathfrak{q}%
}(\delta)\right]  ,\nonumber
\end{align}
where $\underline{\Theta}_{\delta}$ is a compact set.

\subsection{Identification}

Kelejian and Prucha (1998) discuss identification based on linear moment
restrictions for a cross sectional spatial model. In line with their
discussion we observe that identification fails if instruments for $My_{1}%
^{+}$ are collinear with $Z_{1}^{+}$. One situation where identification of
$\lambda$ fails is the case where $\beta=0$. Another situation where
identification via instrumentation in terms of neighbor's neighbor's,
characteristics fails may arise if there are $R$ groups of size $m_{g}$,
$g=1,\ldots,R$, and social interactions take place only within groups, and all
members of a group are friends of equal importance. If the calculation of
group means includes all members we have $M=diag_{g=1}^{R}(M_{m_{g}})$ with
$M_{m_{g}}=e_{m_{g}}e_{m_{g}}^{\prime}/m_{g}$, where $e_{m_{g}}$ denotes a
$m_{g}\times1$ vector of ones. If the calculation of group means affecting the
$i$-th member excludes the $i $-th member we have $M=diag_{g=1}^{R}(M_{m})$
with $M_{m_{g}}=(e_{m_{g}}e_{m_{g}}^{\prime}-I_{m_{g}})/(m_{g}-1)$. Both in
the first case and, provided that all groups are of the same size,
identification via instruments fails since in those cases $M(I-\lambda
M)^{-1}=c_{1}I+c_{2}M$ for some constants $c_{1}$ and $c_{2}$. However, in the
latter case identification is achievable if there is variation in the group
size. For a further discussion of these cases for cross sectional data see
Bramoulle, Djebbari and Fortin (2009) and de Paula (2016), and Kelejian and
Prucha (2002) and Kelejian et al. (2006) for an early discussion of
identification in case of equal weights.

Even if identification based on linear moment restrictions fails,
identification may still be possible based on the quadratic moment conditions.
We discuss high level conditions that ensure identification of $\delta$ based
on the linear and quadratic moment conditions (\ref{Combined_Mom_Cond}). We
emphasize that because of the adopted data transformation the objective
function of the GMM estimator (\ref{GMM_Object}) is additive in the linear and
quadratic moment condition. The derivation of the subsequent results depends
crucially on this additivity of the objective function, and the fact that in
the limit both terms are zero at the true parameter value.

It proves helpful to collect the instruments in the $n\times p$ matrix
$H=[h^{1},...,h^{p}]$ and to observe that $V_{n}^{h}=n^{-1}H^{\prime}H.$

\bigskip

\begin{assume}
\label{ID_High_Lev} Let $y$ be generated according to (\ref{Model1}), and
assume that the instruments $h^{r}$ and matrices $A^{r}$ satisfy the
conditions stated above. Let $\delta_{0}=\left(  \lambda_{0},\beta_{0}%
^{\prime}\right)  ^{\prime}$ where $\lambda_{0}\in$ $\Theta_{\lambda}$ with
$\Theta_{\lambda}=(-1,1)$ and $\beta_{0}\in\Theta_{\beta}$ where
$\Theta_{\beta}$ is an open and bounded subset of $\mathbb{R}^{k_{z}} $.
Furthermore assume that\newline(i) $n^{-1}H^{\prime}u_{1}^{+}=o_{p}(1)$,
$n^{-1}u_{1}^{+\prime}A^{r}u_{1}^{+}=o_{p}(1)$,\newline(ii)
$\operatorname*{plim}n^{-1}H^{\prime}My_{1}^{+}=\Gamma_{HMy}$,
$\operatorname*{plim}n^{-1}H^{\prime}Z_{1}^{+}=\Gamma_{HZ}$,
$\operatorname*{plim}n^{-1}W_{1}^{+\prime}A^{r}u_{1}^{+}=\Gamma_{WA_{r}u}$,
and $\operatorname*{plim}n^{-1}W_{1}^{+\prime}A^{r}W_{1}^{+}=\Gamma_{WA_{r}W}$
are finite for all $r=1,..,q$, \newline(iii) $\operatorname*{plim}V_{n}%
^{h}=V^{h}$ and $\operatorname*{plim}V_{n}^{a}=V^{a}$ are finite with $V^{h}$
and $V^{a}$ nonsingular.
\end{assume}

\bigskip

The postulated convergence assumptions are at the level typically assumed in a
general analysis of $M$-estimators; see e.g., Amemiya (1985, pp. 110). The
assumptions $n^{-1}H^{\prime}u_{1}^{+}=o_{p}(1)$, $n^{-1}u_{1}^{+\prime}%
A^{r}u_{1}^{+}=o_{p}(1)$ are the asymptotic analogue of the orthogonality
conditions (\ref{Ortho}). Let $\Gamma_{HW}=[\Gamma_{HMy},\Gamma_{HZ}]$, and
consider the $q\times2$ matrices $S=\operatorname*{plim}$ $S_{n} $
with\textbf{\ }
\[
S_{r,n}=n^{-1}\left[  y_{1}^{+\prime}M^{\prime}Q_{H}^{\prime}A_{r}Q_{H}%
y_{1}^{+},y_{1}^{+\prime}M^{\prime}Q_{H}^{\prime}A_{r}Q_{H}My_{1}^{+}\right]
\]
\textbf{\ }and $S_{n}=\left[  S_{1,n}^{\prime},...,S_{q,n}^{\prime}\right]
^{\prime}$ where $Q_{H}=I-Z_{1}^{+}(Z_{1}^{+\prime}P_{H}Z_{1}^{+})^{-1}%
Z_{1}^{+\prime}P_{H}$ with $P_{H}=H\left(  H^{\prime}H\right)  ^{-1}H^{\prime
}$. The following lemma establishes conditions for identification irrespective
of whether $M$ is endogenous or exogenous.

\bigskip

\begin{lemma}
\label{Consistency_Lemma1}Let Assumption \ref{ID_High_Lev} hold. Then,\newline
i) if $\Gamma_{HW}$ has full column rank, then $\operatorname*{plim}%
n^{-1/2}m_{n,\mathfrak{l}}\left(  \delta\right)  =0$ has a unique solution at
$\delta=\delta_{0}$, and the parameters are identifiable from the linear
moment condition alone.\newline ii) if $\Gamma_{HW}$ does not have full column
rank, but $\Gamma_{HZ}$ and $S $ have full column rank, then
$\operatorname*{plim}n^{-1/2}m_{n}\left(  \delta\right)  =0$ has a unique
solution at $\delta=\delta_{0}$ and the parameters are identifiable from the
linear and quadratic moment conditions.
\end{lemma}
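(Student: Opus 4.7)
The plan is to solve the limiting moment equations $\operatorname*{plim}n^{-1/2}m_{n}(\delta)=0$ jointly in $\delta$ and to show that $\delta=\delta_{0}$ is the unique solution. Using the identity $u_{1}^{+}(\delta)=u_{1}^{+}-W_{1}^{+}(\delta-\delta_{0})$, Assumption \ref{ID_High_Lev}(i),(ii) yield $\operatorname*{plim}n^{-1}H^{\prime}u_{1}^{+}(\delta)=-\Gamma_{HW}(\delta-\delta_{0})$. For part (i), full column rank of $\Gamma_{HW}$ immediately forces $\delta=\delta_{0}$.

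For part (ii), since $\Gamma_{HW}=[\Gamma_{HMy},\Gamma_{HZ}]$ is column-rank-deficient while $\Gamma_{HZ}$ has full column rank, $\Gamma_{HMy}$ must lie in the column span of $\Gamma_{HZ}$, so $\Gamma_{HMy}=\Gamma_{HZ}c$ for a unique $c\in\mathbb{R}^{k_{z}}$. The linear limiting equation collapses to $\Gamma_{HZ}[c(\lambda-\lambda_{0})+(\beta-\beta_{0})]=0$ and then, by full column rank of $\Gamma_{HZ}$, to $\beta-\beta_{0}=-c(\lambda-\lambda_{0})$. Writing $\gamma=(1,-c^{\prime})^{\prime}$ and $\kappa=\lambda-\lambda_{0}$, any candidate $\delta$ satisfying the linear moment obeys $\delta-\delta_{0}=\kappa\gamma$, so identification reduces to showing that the quadratic moment forces $\kappa=0$.

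Substituting this representation and expanding, the plim of $n^{-1}u_{1}^{+}(\delta)^{\prime}A^{r}u_{1}^{+}(\delta)$ equals $\kappa^{2}\gamma^{\prime}\Gamma_{WA_{r}W}\gamma-2\kappa\gamma^{\prime}\Gamma_{WA_{r}u}$ (the $n^{-1}u_{1}^{+\prime}A^{r}u_{1}^{+}$ piece vanishes by Assumption \ref{ID_High_Lev}(i)), so the $q$ quadratic equations become $\kappa[\kappa\gamma^{\prime}\Gamma_{WA_{r}W}\gamma-2\gamma^{\prime}\Gamma_{WA_{r}u}]=0$ for $r=1,\ldots,q$. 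To translate the full column rank of $S$ into nonexistence of a nonzero $\kappa$ satisfying these equations, I will compute $\operatorname*{plim}S_{r,n}$: using $Q_{H}Z_{1}^{+}=0$ and the structural equation $y_{1}^{+}=\lambda_{0}My_{1}^{+}+Z_{1}^{+}\beta_{0}+u_{1}^{+}$ to get $Q_{H}y_{1}^{+}=\lambda_{0}Q_{H}My_{1}^{+}+Q_{H}u_{1}^{+}$, together with $(Z_{1}^{+\prime}P_{H}Z_{1}^{+})^{-1}Z_{1}^{+\prime}P_{H}My_{1}^{+}\to c$ (from $\Gamma_{HMy}=\Gamma_{HZ}c$) and $(Z_{1}^{+\prime}P_{H}Z_{1}^{+})^{-1}Z_{1}^{+\prime}P_{H}u_{1}^{+}=o_{p}(1)$ (from Assumption \ref{ID_High_Lev}(i)), one obtains $\operatorname*{plim}S_{r,n}=[\lambda_{0}\gamma^{\prime}\Gamma_{WA_{r}W}\gamma+\gamma^{\prime}\Gamma_{WA_{r}u},\,\gamma^{\prime}\Gamma_{WA_{r}W}\gamma]$. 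Full column rank of $S$ is therefore equivalent to linear independence in $\mathbb{R}^{q}$ of $(\gamma^{\prime}\Gamma_{WA_{r}W}\gamma)_{r}$ and $(\gamma^{\prime}\Gamma_{WA_{r}u})_{r}$, which precludes any nonzero scalar $\kappa$ with $\kappa(\gamma^{\prime}\Gamma_{WA_{r}W}\gamma)_{r}=2(\gamma^{\prime}\Gamma_{WA_{r}u})_{r}$; hence $\kappa=0$ and $\delta=\delta_{0}$.

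The hard part is the clean identification of $\operatorname*{plim}S_{r,n}$ in terms of $\gamma$, $\Gamma_{WA_{r}W}$, and $\Gamma_{WA_{r}u}$: one must track the residual terms generated by replacing the sample projection coefficient by its probability limit $c$, showing that all arising cross terms are $o_{p}(1)$ via the boundedness supplied by Assumption \ref{ID_High_Lev}(ii) and the orthogonality $n^{-1}H^{\prime}u_{1}^{+}=o_{p}(1)$ from (i).
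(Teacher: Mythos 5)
Your proof is correct and follows essentially the same route as the paper: solve the linear moment equations to obtain $\beta(\lambda)-\beta_{0}=-c(\lambda-\lambda_{0})$, substitute $\delta-\delta_{0}=\kappa\gamma$ into the quadratic moments, and show via the projection identities $Q_{H}Z_{1}^{+}=0$, $Q_{H}My_{1}^{+}\approx W_{1}^{+}\gamma$ that the resulting coefficient vectors span the same space as the columns of $S$, so full column rank of $S$ forces $\kappa=0$. (Your sign $-2\kappa\gamma^{\prime}\Gamma_{WA_{r}u}$ on the linear-in-$\kappa$ term is the correct one --- consistent with the paper's later expansion $2(\lambda_{0}-\lambda)\gamma_{b}^{r}+(\lambda_{0}-\lambda)^{2}\gamma_{c}^{r}$, while the displayed equation (\ref{Lim_quad}) carries the opposite sign --- but this is immaterial to the uniqueness argument.)
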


\bigskip

Part (i) of the lemma maintains that $\Gamma_{HW}$ has full column rank. This
condition is maintained in Kelejian and Prucha (1998), and subsequent papers
on instrumental variable estimators for spatial network models. If
$\Gamma_{HZ}$ has full column rank, this condition is equivalent to
postulating that $\Gamma_{HMy}$ is not collinear with $\Gamma_{HZ}$.

Part (ii) shows that by utilizing the quadratic moment conditions
identification is still possible even if $\Gamma_{HW}$ does not have full
column rank. We maintain that $\Gamma_{HZ}$ has full column rank, which is a
standard instrument relevance condition typically imposed in IV settings.
Given that $\Gamma_{HZ}$ has full column rank we have $\Gamma_{HMy}%
=\Gamma_{HZ}c$ for some vector $c$. This scenario arises in particular when $M
$ partitions the network such that $M=M^{2}$ or when $M(I-\lambda
M)^{-1}=c_{1}I+c_{2}M$ as discussed above, see Bramoulle, Djebbari and Fortin
(2009) and de Paula (2016) for related results.

Our adopted data transformation has the advantage that the objective function
of the GMM estimator given by (\ref{GMM_Object}) is additive in the parts
involving the linear and quadratic moment conditions. Given this structure we
show in the proof of the lemma that asymptotically all solutions of the linear
moment conditions are described by the relation $\beta\left(  \lambda\right)
-\beta_{0}=-c\left(  \lambda-\lambda_{0}\right)  $. Substitution of this
expression for $\beta(\lambda)$ into the quadratic moment conditions yields%
\begin{equation}
\operatorname*{plim}n^{-1/2}\overline{m}_{n,\mathfrak{q}}(\lambda,\beta\left(
\lambda\right)  )=S\left[
\begin{array}
[c]{cc}%
1/2 & 0\\
\lambda_{0} & 1
\end{array}
\right]  ^{-1}[\lambda-\lambda_{0},(\lambda-\lambda_{0})^{2}]^{\prime
}\label{Lim_quad}%
\end{equation}
. Obviously those equations have a unique solution at $\lambda=\lambda_{0}$ if
$S$ has full column rank, which in turn implies that linear and quadratic
moment conditions have a unique solution at $\delta=\delta_{0}$; see Lee
(2007, pp. 493) for a corresponding discussion for a cross sectional spatial
model. In an application it may be convenient to check this condition by
checking on the non-singularity of $S_{n}^{\prime}S_{n}$. A necessary
condition for $S$ to have full column rank is that $y^{+}$ and $My^{+}$ do not
lie in the space spanned by $Z.$ This condition is likely satisfied since the
reduced form (\ref{Model2_Reduced}) depends on both $Z$ and $u$.

With somewhat stronger assumptions on the form of endogeneity of $M$ it is
possible to discuss explicit choices for $h^{r}$ and $A^{r}$. To be specific
we now assume that $\mathfrak{\upsilon}$, one of the unobserved determinants
of $M$, is independent of $u$. The network is still allowed to depend on $\mu$
and thus still is potentially endogenous. Consequently, since under the
maintained assumptions $M$ is measurable w.r.t. $\zeta$, $\mu$ and
$\mathfrak{\upsilon}$ and $E\left[  u_{t}|z_{1},z_{2},\mu,\mathfrak{\upsilon
}\right]  =0$, using (\ref{Model2_Reduced}) we have $E\left[  M^{s}z_{t}%
^{1}u_{1}^{+}\right]  =$ $E\left[  M^{s}z_{t}^{1}E\left[  u_{1}^{+}%
|z_{1},z_{2},\mu,\mathfrak{\upsilon}\right]  \right]  =0$ for $s=0,1,\ldots$
and
\[
E\left[  My_{1}^{+}\mid z_{1},z_{2},\mu,\mathfrak{\upsilon}\right]
=M(I-\lambda M)^{-1}Z_{1}^{+}\beta=\sum_{s=0}^{\infty}\lambda^{s}M^{s+1}%
Z_{1}^{+}\beta.
\]
From this we see that the ideal instrument for $My_{1}^{+}$ is a nonlinear
function of unknown parameters and $M^{s}z_{t}^{1}$, $s=0,1,\ldots$. This
suggests that the set of instruments $h^{r}$, $r=1,\ldots,p$ can be taken to
correspond to the the linearly independent columns of $z_{t}^{1}$,$Mz_{t}^{1}
$,$M^{2}z_{t}^{1}$,$M^{3}z_{t}^{1}$\ldots with $t=1,2$. This set can be viewed
as providing an approximation of the ideal instruments. Kelejian and Prucha
(1998,1999) make a corresponding observation within the context of a spatial
cross sectional model and suggested the use of higher order spatial lags of
the exogenous variables as additional instruments.

From the reduced form it follows further that
\[
VC\left[  y_{1}^{+}\mid z_{1},z_{2},\mu,\mathfrak{\upsilon}\right]
=\sigma^{2}(I-\lambda M)^{-1}(I-\lambda M^{\prime})^{-1}=\sigma^{2}\sum
_{s=0}^{\infty}\sum_{\tau=0}^{\infty}\lambda^{s+\tau}M^{s}M^{\prime\tau}.
\]
As in the spatial literature, and also motivated by an inspection of the score
of the Gaussian log-likelihood function, this suggests that the $A^{r}$,
$r=1,\ldots,q$ can be chosen from the set $\{M^{s}M^{\tau\prime}%
-diag(M^{s}M^{\tau\prime})$, $s,\tau=0,1...\}$. Without loss of generality we
can work with symmetrized versions of those matrices, with $(M+M^{\prime})/2$
and $M^{\prime}M-diag(M^{\prime}M)$ as leading selections.

In situations when endogeneity is of a more general form, in other words when
$\mathfrak{\upsilon}$ are not independent of $u$ then the above expressions
can be replaced with projections on $z_{1},z_{2}$ i.e. $E\left[  My_{1}%
^{+}\mid z_{1},z_{2}\right]  $ and $VC\left[  y_{1}^{+}\mid z_{1}%
,z_{2}\right]  $ or approximations thereof. We discuss possible practical
choices in the next section where the context of an explicit network formation
model makes it easier to give specific recommendations.

\subsection{Network Formation}

Practical implementation of our method raises a number of questions. Apart
from the question of how to select the $h^{r}$ and $A^{r}$ discussed above,
this includes the question for which network formation models the high level
assumptions are satisfied. The answers to these questions are model specific.
We illustrate them by considering the network formation model analyzed by
Goldsmith-Pinkham and Imbens (2013). A growing literature on estimation of
network formation models includes Chandasekhar (2015), de Paula (2016), Graham
(2016), Leung (2016), Ridder and Sheng (2016) and Sheng (2016). However, our
focus is on developing a GMM estimator for the parameters $\delta$ that is
robust to the network formation process, rather than on the estimation of the
network formation process.

\ We continue to use model (\ref{Model1}), and assume that the adjacency
matrix $D=(d_{ij})$ is formed by a strategic network formation model similar
to Jackson (2008) and Goldsmith-Pinkham and Imbens (2013). More specifically,
let $U_{i}\left(  j\right)  $ be the utility of individual $i$ forming a link
with individual $j$. Then we assume that the elements of $D$ are generated as
\begin{equation}
d_{ij}=1\left\{  U_{i}\left(  j\right)  >0\right\}  1\left\{  U_{j}\left(
i\right)  >0\right\}  1\{s_{ij}\leq c\}\label{Dis}%
\end{equation}
with $d_{ii}=0$ and $d_{ij}=d_{ji}$, and where $s_{ij}=s_{ji}$ is a measure of
\textquotedblleft distance\textquotedblright\ between $i$ and $j$, and $c$ is
a finite constant$.$ An example for the above model arises in situations where
interactions are formed within groups. In this case we may define
$s_{ij}=\left\vert g_{i}-g_{j}\right\vert $, where $g_{i}\in\{1,2,3...\}$
represents a group index, and $c=0$. Another example arises when $s_{ij}$
relates to physical location such that individuals only form links if they are
in sufficiently close proximity.

Let $\zeta$ be a vector of all observable characteristics affecting the
network formation process and assume that $s_{ij}$ is a function of $\zeta$
such that $s_{ij}=s_{ij}(\zeta)$. Furthermore assume that the utility function
$U_{i}\left(  j\right)  $ depends on some of the observable characteristics
collected in $\zeta$ and unobservables $\mu$ and $\mathfrak{\upsilon}$, and is
given by
\begin{equation}%
\begin{array}
[c]{c}%
U_{i}\left(  j\right)  =\alpha_{0}+\sum_{l=1}^{L}\alpha_{\zeta l}\left\vert
\zeta_{il}-\zeta_{jl}\right\vert +\alpha_{\mu}\left\vert \mu_{i}-\mu
_{j}\right\vert +\mathfrak{\upsilon}_{ij}%
\end{array}
\label{Util}%
\end{equation}
where for simplicity $\mathfrak{\upsilon}_{ij}$ is i.i.d. independent of
$u_{it},\mu,\zeta$ and $z_{1}^{1},z_{2}^{1}$. The observable characteristics
appearing in the utility function could refer to sex, race, income, etc.

The network formation model implies that $m_{ij}=d_{ij}/\sum_{l=1}^{n}d_{il}$
is measurable w.r.t. $z_{1},z_{2},\mu,\mathfrak{\upsilon}$. Assumption
\ref{ID_High_Lev} postulates that $n^{-1}h^{r\prime}u_{1}^{+}=o_{p}(1)$,
$n^{-1}u_{1}^{+\prime}A^{r}u_{1}^{+}=o_{p}(1)$. The next lemma implies these
assumptions from lower level conditions. The lemma also provides specific
selections of $h^{r}$ and $A^{r}$ for which those conditions are satisfied.

\begin{lemma}
\label{Lemma_GI_Reg} Suppose the network is generated by the above model, and
suppose Assumption \ref{ID_High_Lev} holds, except for postulating that
$n^{-1}h^{r\prime}u_{1}^{+}=o_{p}(1)$ and $n^{-1}u_{1}^{+\prime}A^{r}u_{1}%
^{+}=o_{p}(1)$ holds. \newline(a) A a sufficient condition for $n^{-1}%
h^{r\prime}u_{1}^{+}=o_{p}(1)$ and $n^{-1}u_{1}^{+\prime}A^{r}u_{1}^{+}%
=o_{p}(1)$ to hold is that $\left\Vert h_{ir}\right\Vert _{2+\delta}\leq
K_{h}<\infty$ for some $\delta>0$, and $\sum_{j=1}^{n}\left\vert
a_{ijr}\right\vert \leq K_{a}<\infty$.\newline(b) Suppose that $\sum_{l=1}%
^{n}d_{il}\geq1$, $s_{ij}=s_{ji}$ and\newline(i) $\sum_{j=1}^{n}1\left\{
s_{ij}\leq c\right\}  \leq K<\infty$,\newline(ii) $\sum_{j=1}^{n}\left(
\Pr\left(  s_{ij}\leq c\right)  \right)  ^{1/[s(2+\delta)]}\leq K<\infty$,
$\left\Vert z_{t}^{1}\right\Vert _{2+\delta}\leq K_{z}<\infty$ for some
$\delta>0$ and some $s=1,2,....$,\newline and the instruments $h^{r}$ are of
the form $z_{t}^{1}$,$Mz_{t}^{1}$,\ldots,$M^{s}z_{t}^{1}$\ and the matrices
$A^{r}$ are of the form $\bar{M}^{\tau}-diag(\bar{M}^{\tau})$, $\tau\leq s,$
$\tau\in\mathbb{N}_{+}$, where $\bar{M}=\left(  M+M^{\prime}\right)  /2$, or
$(M^{\prime}M)^{\tau}-diag((M^{\prime}M)^{\tau})$, $\tau\leq s/2$. Then the
sufficient conditions in (a) are satisfied. Furthermore, for some finite
$K_{a}$ we have $\sum_{j=1}^{n}\left\Vert a_{ijr}\right\Vert _{2+\delta}\leq
K_{a}.$.
\end{lemma}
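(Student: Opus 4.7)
The plan is to prove the two parts separately: part (a) is a straightforward second-moment calculation that leverages the conditional i.i.d.\ structure of $u$, while part (b) requires controlling powers of the (random) network matrix $M$ via the sparsity conditions (i) and (ii).

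For part (a), the starting point is that, since $\mathfrak{\upsilon}$ is assumed independent of $u$ in this subsection and since the elements of $u$ are conditionally i.i.d.\ with mean zero and variance $\sigma^{2}$ given $(z_{1},z_{2},\mu)$, one can condition on the enlarged $\sigma$-field $\mathcal{F}=\sigma(z_{1},z_{2},\mu,\mathfrak{\upsilon})$ to treat $h^{r}$ and $A^{r}$ (which are $\mathcal{F}$-measurable) as constants while the coordinates of $u_{1}^{+}$ remain mean zero and mutually independent. For the linear moment I would compute
\[
E\bigl[(n^{-1}h^{r\prime}u_{1}^{+})^{2}\bigr]
=n^{-2}\sum_{i,j}E\bigl[h_{ir}h_{jr}\,E[u_{i1}^{+}u_{j1}^{+}\mid\mathcal{F}]\bigr],
\]
observe that the off-diagonal terms vanish by the conditional independence, and bound the diagonal using $E[h_{ir}^{2}]\le\|h_{ir}\|_{2+\delta}^{2}\le K_{h}^{2}$ to obtain an $O(1/n)$ bound. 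Markov's inequality then delivers the stated $o_{p}(1)$.

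For the quadratic moment, since $A^{r}$ has zero diagonal, $n^{-1}u_{1}^{+\prime}A^{r}u_{1}^{+}=n^{-1}\sum_{i\neq j}a_{ijr}u_{i1}^{+}u_{j1}^{+}$ has conditional mean zero. Expanding its second moment, conditional independence leaves only terms with matched index pairs $\{k,l\}=\{i,j\}$, which after bounding the joint fourth moment of $u^{+}$ by a constant yields an upper bound of order $n^{-2}\sum_{i,j}a_{ijr}^{2}$. Using the absolute row-sum hypothesis, $\sum_{i,j}a_{ijr}^{2}\le\sum_{i}(\sum_{j}|a_{ijr}|)^{2}\le nK_{a}^{2}$, so the overall variance is $O(1/n)$ and the conclusion again follows from Markov's inequality.

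For part (b), I need to verify that the proposed instruments $z_{t}^{1},Mz_{t}^{1},\ldots,M^{s}z_{t}^{1}$ satisfy the $L_{2+\delta}$ moment bound from (a), and that the matrices $\bar{M}^{\tau}-\operatorname{diag}(\bar{M}^{\tau})$ and $(M^{\prime}M)^{\tau}-\operatorname{diag}((M^{\prime}M)^{\tau})$ have uniformly bounded absolute row sums as well as the stronger $L_{2+\delta}$ row-sum bound claimed at the end of the lemma. Condition (i) gives $\sum_{l}d_{il}\le K$ deterministically, so together with the hypothesis $\sum_{l}d_{il}\ge 1$ the matrix $M$ is row-stochastic with at most $K$ non-zero entries per row and entries bounded by $1$. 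Iterating, the same sparsity pattern is inherited by $\bar{M}^{\tau}$ and $(M^{\prime}M)^{\tau}$ through walks of length $\tau$ in the friendship graph, which delivers the deterministic uniform bound on $\sum_{j}|a_{ijr}|$.

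For the $L_{2+\delta}$ bound on $(M^{\tau}z_{t}^{1})_{i}$ (and analogously for $\|a_{ijr}\|_{2+\delta}$), I would exploit that the rows of $M^{\tau}$ are non-negative with sum bounded by $1$, so by Jensen's inequality
\[
\bigl|(M^{\tau}z_{t}^{1})_{i}\bigr|^{2+\delta}\le\sum_{j}(M^{\tau})_{ij}\,|z_{jt}^{1}|^{2+\delta}.
\]
Taking expectations and applying H\"older's inequality to split $(M^{\tau})_{ij}|z_{jt}^{1}|^{2+\delta}$ into the indicator that some chain $i\to k_{1}\to\cdots\to k_{\tau-1}\to j$ of potential links exists (each requiring $s_{\cdot\cdot}\le c$) and the $L_{(2+\delta)q}$ norm of $z_{jt}^{1}$, one can invoke condition (ii) --- whose exponent $1/[s(2+\delta)]$ is tailored to produce an absolutely summable bound after iterating over $\tau\le s$ links --- to obtain a uniform bound on $\sum_{j}E[(M^{\tau})_{ij}|z_{jt}^{1}|^{2+\delta}]$. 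I expect the main obstacle to be the bookkeeping in this last step: tracking the correct H\"older exponents across the $\tau$ factors, matching them to the precise form of condition (ii), and combining them with the deterministic row-sum bound from (i) uniformly in $n$.
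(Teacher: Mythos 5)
Part (a) of your argument is sound: in the network-formation subsection $\mathfrak{\upsilon}$ is independent of $u$, so conditioning on $\sigma(z_{1},z_{2},\mu,\mathfrak{\upsilon})$ makes $h^{r}$ and $A^{r}$ constants while the coordinates of $u_{1}^{+}$ stay conditionally independent with mean zero and unit variance; the two $O(n^{-1})$ variance bounds and Chebyshev then go through exactly as you describe (and, as you implicitly use, the zero diagonal means only pairwise products $E[(u_{i1}^{+})^{2}(u_{j1}^{+})^{2}\mid\cdot]$ with $i\neq j$ arise, so no fourth moments of $u$ are needed). The deterministic row-sum bounds in part (b) are also correct, though you should make explicit that they rest on $d_{ij}=d_{ji}$ together with (b)(i), which give $\Vert M\Vert_{\infty}=1$ and $\Vert M\Vert_{1}\leq K$ and hence bounded row sums for $\bar{M}^{\tau}$ and $(M^{\prime}M)^{\tau}$ by submultiplicativity.

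The genuine gap is the step you defer as ``bookkeeping,'' which is in fact the entire technical content of part (b): the verification of the $L_{2+\delta}$ bounds via condition (b)(ii). Two problems arise with your setup. First, the H\"{o}lder split of $E\bigl[1\{B_{ij}^{\tau}\}\,|z_{jt}^{1}|^{2+\delta}\bigr]$ into $\Pr(B_{ij}^{\tau})^{1/p}$ times an $L_{(2+\delta)q}$-norm of $z_{jt}^{1}$ requires moments of $z_{t}^{1}$ strictly beyond $2+\delta$, which (b)(ii) does not provide; the argument can only close if you target $\Vert h_{ir}\Vert_{2+\delta^{\prime}}\leq K_{h}$ for some smaller $\delta^{\prime}>0$ (which the ``for some $\delta>0$'' in part (a) permits) and explicitly account for how much of $\delta$ is spent on the probability factor. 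Second, you never explain how the probability that a chain of $\tau$ admissible links connects $i$ to $j$ is reduced to the single-link quantities $\Pr(s_{kl}\leq c)$ appearing in (b)(ii); one needs something like $\Pr\bigl(\bigcap_{l}\{s_{k_{l-1}k_{l}}\leq c\}\bigr)\leq\prod_{l}\Pr(s_{k_{l-1}k_{l}}\leq c)^{1/\tau}$ combined with subadditivity of $x\mapsto x^{\alpha}$, so that the sum over paths factors into per-link sums each controlled by (b)(ii) --- this is precisely why the exponent there is $1/[s(2+\delta)]$ rather than $1$. Until these exponents are matched, and the companion bound $\sum_{j}\Vert a_{ijr}\Vert_{2+\delta}\leq K_{a}$ is derived by the same device, part (b) remains an outline rather than a proof.
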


\bigskip

Part (b) of the lemma shows that for our exemplary network model the specific
selection for $h^{r}$ and $A^{r}$ satisfy the properties postulated for our
general model; cp. Assumption 2(i),(ii). As shown in the appendix, the
condition in (b)(ii) that $\sum_{j=1}^{n}\left(  \Pr\left(  s_{ij}\leq
c\right)  \right)  ^{1/[s(2+\delta)]}\leq K$ is implied by the stronger
condition $\sum_{j=1}^{n}1\left\{  \Pr\left(  s_{ij}\leq c\right)  >0\right\}
\leq K$. If $\Pr\left(  s_{ij}\leq c\right)  =0$ implies $1\left\{  s_{ij}\leq
c\right\}  =0$ then (b)(i) and b(ii) can be replaced with $\sum_{j=1}%
^{n}1\left\{  \Pr\left(  s_{ij}\leq c\right)  >0\right\}  \leq K.$ The
summability condition in (b) allows for all individuals in the network to
potentially be connected, albeit with small probability for most connections,
while the stronger condition rules out most connections with probability one.

The specific selection for $h^{r}$ and $A^{r}$ does not yield valid linear and
quadratic moment condition if in addition to $M$ being dependent on $\mu$ the
endogeneity of $M$ also stems from correlation between the $\mathfrak{\upsilon
}_{ij}$ and $u_{it}$. In this case the suggestion is to construct matrices
$M^{s}$ and $A^{r}$ in the manner discussed above, but with $M$ replaced by a
matrix $M_{\ast}=(m_{ij\ast})$, which (i) approximates $M$, but (ii) is only
constructed from the exogenous variables $\zeta$ affecting the network
formation process so that $M_{\ast}$ is not correlated with $(u_{it})$. In
particular we may define $m_{ij\ast}=d_{ij\ast}/\sum_{l=1}^{n}d_{il\ast}$
where $d_{ij\ast}=f(\zeta_{i},\zeta_{j})1\{s_{ij}\leq c\}$ is an appropriately
defined distance function. If one were willing to make parametric assumptions
about the error term and fixed effects distribution the function $f\left(
.,.\right)  $ could be chosen as $E\left[  d_{ij}|\zeta_{i},\zeta_{j}\right]
.$

A computational algorithm to estimate the model using both linear and
quadratic moment conditions is based on partialling out the term $Z_{t}\beta$
using the linear moment conditions only. This is possible because $\beta$ is
identified by the linear moment conditions for any fixed value of $\lambda$.
Let $\hat{\beta}\left(  \lambda\right)  =\left(  Z_{1}^{+\prime}P_{H}Z_{1}%
^{+}\right)  ^{-1}Z_{1}^{+\prime}P_{H}\left(  I-\lambda M\right)  y$ be the
2SLS estimator of a linear IV regression of $\left(  I-\lambda M\right)  y$ on
$Z$ using instruments $H$ and set $\delta_{n}\left(  \lambda\right)  =\left(
\lambda,\hat{\beta}\left(  \lambda\right)  ^{\prime}\right)  .$ The second
step consists in substituting $\delta_{n}\left(  \lambda\right)  $ into the
quadratic moment conditions and in minimizing the quadratic part of the moment
function. When Assumptions \ref{ID_High_Lev} holds it follows from
(\ref{Lim_quad}) that this minimization problem has a unique solution. The
following procedure can be used to find starting values for the minimization problem.

\begin{algorithm}
\label{Alg} Let $\overline{m}_{n}(\delta),\hat{\beta}_{z}\left(
\lambda\right)  $ and $\delta_{n}\left(  \lambda\right)  $ be as defined
before. Let $m_{n,q,r}\left(  \delta_{n}\left(  \lambda\right)  \right)
=u_{1}^{+}(\delta)^{\prime}A_{r}u_{1}^{+}(\delta)$\newline(1) Find
$\tilde{\lambda}_{1,2}$ such that $m_{n,q,r}\left(  \delta_{n}\left(
\tilde{\lambda}_{j}^{r}\right)  \right)  =0$ for $j=1,2$ and for
$r=1,...,q.$\newline(2) Solve the problem $\left(  \hat{r},\hat{\jmath
}\right)  =\arg\min_{j=1,2;r=1,..,q}n^{-1}\overline{m}_{n,\mathfrak{q}}\left(
\delta_{n}\left(  \tilde{\lambda}_{j}^{r}\right)  \right)  ^{\prime}\left(
V_{n}^{a}\right)  ^{-1}\overline{m}_{n,\mathfrak{q}}\left(  \delta_{n}\left(
\tilde{\lambda}_{j}^{r}\right)  \right)  .$ \newline(3) Let $\hat{\lambda
}=\tilde{\lambda}_{\hat{\jmath}}^{\hat{r}},,$ $\hat{\beta}_{z}=\hat{\beta}%
_{z}\left(  \hat{\lambda}\right)  .$
\end{algorithm}

It follows from (\ref{Lim_quad}) that $m_{n,\mathfrak{q,}r}\left(  \delta
_{n}\left(  \lambda\right)  \right)  =2\left(  \lambda_{0}-\lambda\right)
\gamma_{b}^{r}+\left(  \lambda_{0}-\lambda\right)  ^{2}\gamma_{c}^{r}%
+o_{p}\left(  1\right)  $ where $\gamma_{b}^{r}$ and $\gamma_{c}^{r}$ are
constants. In large samples $m_{n,\mathfrak{q},r}\left(  \delta_{n}\left(
\lambda\right)  \right)  =0$ has one consistent root and in general a second
inconsistent root. If $S$ has full column rank then the inconsistent root
varies with $r$ such that in step (2) of Algorithm \ref{Alg} only the
consistent root minimizes the set of all quadratic moment conditions.

We conduct a small Monte Carlo experiment with data generated from
(\ref{Model1}) and (\ref{Util}). We set $L=1,$ $p_{z}=2$ and draw $\mu
_{i},u_{it} $ and $z_{it}^{1}$ mutually independently from standard Gaussian
distributions, while $\mathfrak{\upsilon}_{ij}$ is drawn independently from a
logistic distribution. The location characteristics $\zeta_{i}$ are drawn
independently from uniform distributions with heterogenous means, $\zeta
_{i}\sim U\left[  i,i+2\right]  $, and $s_{ij}=1\left\{  \left\vert \zeta
_{i}-\zeta_{j}\right\vert <10\right\}  .$ We set $\alpha_{0}=1,\alpha_{\zeta
}=-.1,\beta_{1}=1\ $and $\alpha_{\mu}=-.1.$ We vary $\lambda$ in $\left\{
.1,.5,.7\right\}  $ and set $\beta_{2}=-\left(  \lambda+\delta\right)
\beta_{1}$ where $\delta$ takes values in $\left\{  .1,.3,.5\right\}  .$
Linear instruments are $h_{t}=\left[  z_{t}^{1},Mz_{t}^{1},M^{2}z_{t}%
^{1}\right]  $, and quadratic moment conditions are formed with $A_{1}=\left(
M+M^{\prime}\right)  /2$ and $A_{2}=M^{\prime}M-\operatorname*{diag}%
(M^{\prime}M).$ As shown in Bramoulle, Djebbari and Fortin (2009) and de Paula
(2016) the model is not identified by linear moment conditions if $\beta
_{2}=-\lambda\beta_{1}.$ Our Monte Carlo design thus approaches the point of
non-identification for linear IV as $\delta$ shrinks towards zero. We consider
sample sizes of $n=100$ and $n=1000$ and set $T=2$ for all designs. Table
\ref{Table_MCresults} reports results for conventional OLS, linear IV and our
linear-quadratic GMM (GMM) estimator defined in (\ref{GMM_Object}). We use
Algorithm \ref{Alg} to find starting values, followed by a full optimization
step over the entire criterion function. For $\lambda=.1$ endogeneity is
relatively mild leading to OLS being reasonably unbiased, at least in absolute
terms. As $\lambda$ increases to $.5$ and $.7$ OLS becomes seriously biased.
Linear IV performs well when $\delta=.5$, although large biases exist in the
small sample case where $n=100.$ As the sample size increases to $n=1,000$ the
bias disappears and the Mean Absolute Error (MAE) significantly improves.
However, as $\delta$ moves towards $.1$ the performance of linear IV starts to
rapidly deteriorate even in the large sample design with $n=1,000$. This first
manifests itself in elevated MAE's and as $\delta=.1$ in severely biased
estimates and huge MAE values. GMM on the other hand shows very robust
performance across all designs and clearly dominates all estimators in both
sample sizes and for all parameter values. It is essentially unbiased even
when $n=100,$ with a percentage median bias of $1\%$ or less. For the larger
sample size the bias further drops and is essentially zero. The MAE is
significantly smaller for GMM than either for OLS or linear IV in all designs
and for both sample sizes.

\section{The General Model\label{Model}}

\subsection{Specification}

We consider a fairly general panel data model, which covers the example in
Section \ref{Example} as a special case, but allows for higher order and time
dependent spatial lags, weakly exogenous covariates and common factors. Let
$\left\{  y_{t},x_{t},z_{t}\right\}  _{t=1}^{T}$ be a panel data set defined
on a common probability space $\left(  \Omega,\mathcal{F},P\right)  $, where
$y_{t}=\left[  y_{1t},....,y_{nt}\right]  ^{\prime}$, $x_{t}=\left[
x_{1t}^{\prime},...,x_{nt}^{\prime}\right]  ^{\prime}$, and $z_{t}=\left[
z_{1t}^{\prime},...,z_{nt}^{\prime}\right]  ^{\prime}$ are of dimension
$n\times1$, $n\times k_{x}$ and $n\times k_{z}$. The dynamic and cross
sectionally dependent panel data model we consider can then be written
as\smallskip\
\begin{equation}%
\begin{array}
[c]{l}%
y_{t}=\sum_{p=1}^{P}\lambda_{p}M_{p,t}y_{t}+Z_{t}\beta+\varepsilon_{t}%
=W_{t}\delta+\varepsilon_{t},\\
\varepsilon_{t}=\sum_{q=1}^{Q}\rho_{q}\underline{M}_{q,t}\varepsilon_{t}+\mu
f_{t}+u_{t},
\end{array}
\label{MOD2}%
\end{equation}
\smallskip where $Z_{t}$ is a $n\times k$ matrix composed of columns of
$x_{t}^{1},z_{t}^{1},M_{1,t}x_{t}^{1}$,$M_{1,t}z_{t}^{1},\ldots,M_{P,t}%
x_{t}^{1},M_{P,t}z_{t}^{1}$ and a finite number of time lags thereof,
$W_{t}=\left[  M_{1,t}y_{t},\ldots,M_{P,t}y_{t},Z_{t}\right]  $ and
$\delta=[\lambda^{\prime},\beta^{\prime}]^{\prime}$ are the parameters of
interest. As for the exemplary model discussed in previous section
$z_{t}=[z_{t}^{1},\zeta_{t}]$ is a matrix of $k_{z}$ strictly exogenous
variable, where $z_{t}^{1}$ denotes the strictly exogenous variables in the
regression, and $\zeta_{t}$ denotes additional strictly exogenous variables
which may affect the network formation. The latter are now allowed to vary
with $t$. In addition we now also include \thinspace$k_{x}$ weakly exogenous
covariates $x_{t}=[x_{t}^{1},\xi_{t}]$, which we partition in an analogous
manner. The specification allows for temporal dynamics in that $x_{it}$ may
include a finite number of time lags of the endogenous variables. As a
normalization we take $m_{p,iit}=\underline{m}_{q,iit}=0$.

Our setup allows for fairly general forms of cross-sectional dependence.
Consistent with the exemplary social interaction model discussed in the
previous section, we allow for network interdependencies in the form of
\textquotedblleft spatial lags\textquotedblright\ in the endogenous variables,
the exogenous variables and in the disturbance process. Our specification
accommodates higher order spatial lags, as well as time lags thereof, where
spatial lags of predetermined variables should be viewed as being included in
$x_{it}$. The $n\times n$ spatial weight matrices are denoted as
$M_{p,t}=(m_{p,ijt})$ and $\underline{M}_{q,t}=(\underline{m}_{q,ijt})$. We do
assume that the matrices $M_{p,t}$ and $\underline{M}_{q,t} $ are known or
observed in the data.

Alternatively or concurrently, we allow in each period $t$ for the regressors
and disturbances (and thus for the dependent variable) to be affected by
common shocks. As in Andrews (2005) and Kuersteiner and Prucha (2013), those
common shocks are captured by a sigma field, say, $\mathcal{C}_{t}$
$\subset\mathcal{F}$, but are otherwise left unspecified. Let $\mathcal{C}%
=\mathcal{C}_{1}\vee\ldots\vee\mathcal{C}_{T}$ where $\vee$ denotes the sigma
field generated by the union of two sigma fields. An important special case
where common shocks are not present arises when $\mathcal{C}_{t}%
=\mathcal{C}=\{\emptyset,\Omega\}$.

We also allow for interactive effects in the error term where $\mu$ is an
$n\times1$ vector of unobserved factor loadings or individual specific fixed
effects, which may be time varying through a common unobserved factor $f_{t}$.
The factor $f_{t}$ is assumed to be measurable with respect to a sigma field
$\mathcal{C}_{t}$. Furthermore, let $\lambda$ and $\rho$ be, respectively, $P$
and $Q$ dimensional vectors of parameters with typical elements $\lambda_{p}$
and $\rho_{q}$.

Note that (\ref{MOD2}) is a system of $n$ equations describing simultaneous
interactions between the individual units. The weighted averages, say,
$\overline{y}_{p,it}=\sum_{j=1}^{n}m_{p,ijt}y_{jt}$ and $\overline
{\varepsilon}_{q,it}=\sum_{j=1}^{n}\underline{m}_{q,ijt}\varepsilon_{jt}$
model contemporaneous direct cross-sectional interactions in the dependent
variables and the disturbances. In line with the literature on spatial
networks we refer to those weighted averages as spatial lags, and to the
corresponding parameters as spatial autoregressive parameters.\footnote{An
alternative specification, analogous to specifications considered in Baltagi
et al (2008), would be to model the disturbance process in (\ref{MOD2}) as
$\varepsilon_{t}=\phi f_{t}+v_{t}$, where $\phi$ and $v_{t}$ follow possibly
different spatial autoregressive processes. Since we are not making any
assumptions on the unobserved components $\mu$ it is readily seen that the
above specification includes this case, provided that the spatial weights do
not depend on $t$.} We do not assume that the weights are given constants, but
allow them to be stochastic. The weights are allowed to be endogenous in that
they can depend on $\mu_{1},\ldots,\mu_{n}$ and $u_{it}$, apart from
predetermined variables and common shocks, and thus can be correlated with the
disturbances $\varepsilon_{t}$.\footnote{It is for this reason that we list
spatial lags of $x_{t}$ and $z_{t}$ separately in defining the regressors in
$Z_{t}$. If the $M_{p,t}$ are strictly exogenous we can incorporate those
spatial lags w.o.l.o.g. into $x_{t}$ and $z_{t}$. The matrix $Z_{t}$ may also
contain additional endogenous variables, apart from the spatial lags in
$y_{t}$. We do not explicitly list those variables for notational simplicity.}
In fact, and in contrast to most of the recent literature discussed in the
introduction on models with endogenous spatial weights, we do not impose any
particular restrictions on how the weights are generated.

For $i=1,...,n$ let $z_{i}^{o}=(z_{i1},\ldots,z_{iT})$,\textbf{\ }${x}%
_{it}^{o}=\left[  {x}_{i1},\ldots,{x}_{it}\right]  $, ${u}_{it}^{o}=\left[
u_{i1},\ldots,u_{it}\right]  $, $u_{-i,t}=\left[  u_{i1},\ldots,u_{i-1,t}%
,u_{i+1,t},...u_{nt}\right]  $. We next formulate our main moment conditions
for the idiosyncratic disturbances.

\begin{assume}
\label{GA1} Let $K_{u}$ be some finite constant (which is taken, w.o.l.o.g.,
to be greater then one), and define the sigma fields
\[
\mathcal{B}_{n,i,t}=\sigma\left(  \left\{  {x}_{jt}^{o},{z}_{j}^{o}%
,{u}_{j,t-1}^{o},\mu_{j}\right\}  _{j=1}^{n},u_{-i,t}\right)  \text{,
}\mathcal{B}_{n,t}=\sigma\left(  \left\{  {x}_{jt}^{o},{z}_{j}^{o},{u}%
_{j,t-1}^{o},\mu_{j}\right\}  _{j=1}^{n}\right)
\]
and
\[
\mathcal{Z}_{n}\mathcal{=}\sigma(\{{z}_{j}^{o},\mu_{j}\}_{j=1}^{n}).
\]
For some $\delta>0$ and all $t=1,\ldots,T$, $i=1,\ldots,n$, $n\geq1$:
\newline(i) The $2+\delta$ absolute moments of the random variables ${x}%
_{it},{z}_{it}$, $u_{it},$ and $\mu_{i}$ exist, and the moments are uniformly
bounded by a generic constant $K$.\newline(ii) Then the following conditional
moment restrictions hold for some constant $c_{u}>0$:
\begin{align}
&  E\left[  u_{it}|\mathcal{B}_{n,i,t}\vee\mathcal{C}\right]  =0,\label{M1T}\\
&  E\left[  u_{it}^{2}|\mathcal{B}_{n,i,t}\vee\mathcal{C}\right]  \text{
}=\sigma_{t}^{2}\varrho_{i}^{2}\text{\quad with\quad\ }\sigma_{t}^{2}%
,\varrho_{i}^{2}\geq c_{u}\text{,}\label{M2T}\\
&  E\left[  \left\vert u_{it}\right\vert ^{2+\delta}|\mathcal{B}_{n,i,t}%
\vee\mathcal{C}\right]  \leq K_{u}.\label{M3T}%
\end{align}
The variance components $\gamma_{\sigma}=(\sigma_{1}^{2},\ldots,\sigma_{T}%
^{2})^{\prime}$ are assumed to be measurable w.r.t. $\mathcal{C}$. The
variance components $\varrho_{i}^{2}=\varrho_{i}^{2}(\gamma_{\varrho})$ are
taken to depend on a finite dimensional parameter vector $\gamma_{\varrho}$
and are assumed to be measurable w.r.t. $\mathcal{Z}_{n}\vee\mathcal{C}$.
\end{assume}

Condition (\ref{M1T}) clarifies the distinction between weakly exogenous
covariates $x_{it}$ and strictly exogenous covariates $z_{it}.$ The later
enter the conditioning set at all leads and lags. The conditioning sets
$\mathcal{B}_{n,i,t}$ and $\mathcal{B}_{n,t}$ can be expanded to include
additional conditioning variables without affecting the analysis. This may be
of interest if the network formation process in period $t$ depends, in
addition to variables listed in $\mathcal{B}_{n,t}\vee\mathcal{C}$, on
unobserved innovations $\upsilon_{t,ij},$ as long as these innovations are
exogenous. In this case we can expand the conditioning sets $\mathcal{B}%
_{n,t}$ and $\mathcal{B}_{n,i,t}$ by $\mathcal{V}_{1}\vee\ldots\vee
\mathcal{V}_{t}$ with $\mathcal{V}_{t}=\sigma(\{\upsilon_{t,ij}\}_{i,j=1}%
^{n})$. In the following we use the notation $\Sigma_{\sigma}%
=\operatorname*{diag}(\sigma_{t}^{2})$ and $\Sigma_{\varrho}%
=\operatorname*{diag}(\varrho_{i}^{2})$. As a normalization we may take
$\sigma_{T}^{2}=1$ or $n^{-1}\operatorname*{tr}(\Sigma_{\varrho})=1$.
Specifications where $\sigma_{t}^{2}$ and $\varrho_{i}^{2}$ are
non-stochastic, and specifications where the $u_{it}$ are conditionally
homoskedastic are covered as special cases.

In addition to Assumption \ref{GA1} we maintain Assumptions \ref{GA2}%
-\ref{Ass_Con}, which are collected in Appendix \ref{Formal Assumptions} for
ease of presentation. We note that those assumptions do not maintain that the
$f_{t}$ are non-stochastic, but only maintain that the $f_{t}$ are measurable
w.r.t. $\mathcal{C}$. As a normalization we maintain $f_{T}=1$. The unit
specific effects are left unspecified and are allowed to be correlated with
the covariates.

Define $R_{t}\left(  \lambda\right)  =I_{n}-\sum_{p=1}^{P}\lambda_{p}M_{p,t}$
and $\underline{R}_{t}(\rho)=I_{n}-\sum_{q=1}^{Q}\rho_{q}\underline{M}_{q,t}$,
then the reduced form of the model is given by
\begin{align}
y_{t}  &  =R_{t}(\lambda)^{-1}\left(  x_{t}\beta_{x}+z_{t}\beta_{z}%
+\varepsilon_{t}\right)  ,\label{MOD3}\\
\varepsilon_{t}  &  =\underline{R}_{t}(\rho)^{-1}\left(  \mu f_{t}%
+u_{t}\right)  .\nonumber
\end{align}
Applying a Cochrane-Orcutt type transformation by premultiplying the first
equation in (\ref{MOD2}) with $\underline{R}_{t}(\rho)$ yields%
\begin{equation}
\underline{R}_{t}(\rho)y_{t}=\underline{R}_{t}(\rho)W_{t}\delta+\mu
f_{t}+u_{t}.\label{MOD4}%
\end{equation}

The example discussed in the previous section illustrates the use of both
spatial interaction terms and fixed effects in a social interaction model. In
this examples the spatial weights do not vary with $t$. We emphasize that in
our general model we allow for the spatial weights to vary with $t$, and to
depend on sequentially and strictly exogenous variables as well as
unobservables that may be correlated with $u_{t},\mu$ and $f_{t}$\textbf{.} As
a result, the model can also be applied to situations where the location
decision of a unit is a function of sequentially and strictly exogenous
variables, in that we can allow for the distance between units to vary with
$t$ and to depend on those variables.

A further transformation of the spatially Cochrane-Orcutt transformed model
(\ref{MOD4}) is needed to eliminate the unit specific effects $\mu$. In the
classical panel literature with $f_{t}=1$ the Helmert transformation was
proposed by Arellano and Bover (1995) as an alternative forward filter that,
unlike differencing, eliminates fixed effects without introducing serial
correlation in the linear moment conditions underlying their GMM
estimator.\footnote{Hayakawa (2006) extends the Helmert transformation to
systems estimators of panel models by using arguments based on GLS
transformations similar to Hayashi and Sims (1983) and Arellano and Bover
(1995).} Building on this idea we first develop an orthogonal quasi-forward
differencing transformation for the more general case where factors $f_{t}$
appear in the model. More specifically, for $\eta_{ti}=\mu_{i}f_{t}+u_{it}$
and $t=1,\ldots,T-1$ consider the forward differences
\begin{equation}
\eta_{it}^{+}=%
{\textstyle\sum\nolimits_{s=t}^{T}}
\pi_{ts}\eta_{is},\text{ \qquad}u_{it}^{+}=%
{\textstyle\sum\nolimits_{s=t}^{T}}
\pi_{ts}u_{is}\label{FD1}%
\end{equation}
with $\pi_{t}=[0,\ldots,0,\pi_{tt},\ldots,\pi_{tT}]$. Now define the upper
triangular $T-1\times T$ matrix $\Pi=\left[  \pi_{1}^{\prime},..,\pi
_{T}^{\prime}\right]  ^{\prime}$ and let $f=[f_{1},\ldots,f_{T}]$. Then $\Pi
f=0$ is a sufficient condition for the transformation to\ eliminate the unit
specific components such that $u_{it}^{+}=\eta_{it}^{+}$. If in addition
$\Pi\Sigma_{\sigma}\Pi^{\prime}=I$ then under our assumptions the transformed
errors $u_{it}^{+}$ will be uncorrelated across $i$ and $t$. In Proposition
\ref{QHELMERT} in Appendix \ref{TRVCLQ} we present a generalization of the
Helmert transformation that satisfies these two conditions, and give explicit
expressions for the elements $\pi_{ts}=\pi_{ts}(f,\gamma_{\sigma})$. Such
expression are crucial from a computational point of view, especially if
$f_{t}$ is estimated as an unobserved parameter. A more detailed discussion,
including a discussion of a convenient normalization for the factors and how
to handle multiple factors, is given in that appendix and a supplementary
appendix. Our moment conditions involve both linear and quadratic forms of the
forward differenced disturbances. In Proposition \ref{VCLQ} in Appendix
\ref{TRVCLQ} we give a general result on the variances and covariances of
linear quadratic forms based on forward differenced disturbances. To
accommodate moment conditions that are useful under endogenous network
formation the proposition allows for the weights in the linear and quadratic
form to be stochastic. Under a set of fairly weak regularity conditions the
linear quadratic forms are seen to have mean zero, provided the diagonal
elements of weights in the quadratic form are zero. Furthermore, if the
forward differencing operation utilizes the generalized Helmert
transformation, then the linear quadratic forms are orthogonal across $t$, and
additionally for given $t$ linear forms and quadratic forms are also
orthogonal. Those orthogonality relationships turn out to be crucial in
simplifying the asymptotic variance covariance matrix of the GMM estimator
defined in the next section. In addition, as seen in Section \ref{Example},
establishing identification for efficient GMM estimators is greatly simplified
if linear and quadratic moments are orthogonal.

\subsection{Estimator}

For clarity we denote the true parameters of interest $\theta$ and the true
auxiliary variance parameters $\gamma$ defined in Assumption \ref{GA1} as
$\theta_{0}=(\delta_{0}^{\prime},\rho_{0}^{\prime},f_{0}^{\prime})^{\prime}$
and $\gamma_{0}=\left(  \gamma_{0,\varrho}^{\prime},\gamma_{0,\sigma}^{\prime
}\right)  ^{\prime}$. Using (\ref{MOD4}) we define%
\begin{equation}
u_{t}^{+}(\theta_{0},\gamma_{\sigma})=%
{\textstyle\sum\nolimits_{s=t}^{T}}
\pi_{ts}\left(  f_{0},\gamma_{\sigma}\right)  u_{s}=%
{\textstyle\sum\nolimits_{s=t}^{T}}
\pi_{ts}\left(  f_{0},\gamma_{\sigma}\right)  \underline{R}_{s}(\rho
_{0})\left[  y_{s}-W_{s}\delta_{0}\right]  ,\label{UPLUS}%
\end{equation}
with the weights $\pi_{ts}(.,.)$ of the forward differencing operation defined
by Proposition \ref{QHELMERT}. Note that this operation removes the unobserved
individual effects even if $\gamma_{\sigma}\neq\gamma_{0,\sigma}$. Our
estimators utilize both linear and quadratic moment conditions based on
\begin{equation}
u_{\ast t}^{+}(\theta_{0},\gamma)=\Sigma_{\varrho}(\gamma_{\varrho}%
)^{-1/2}u_{t}^{+}(\theta_{0},\gamma_{\sigma}).\label{USPLUS}%
\end{equation}
with $\gamma=\left(  \gamma_{\varrho}^{\prime},\gamma_{\sigma}^{\prime
}\right)  ^{\prime}$. Considering moment conditions based on $u_{\ast t}%
^{+}(\theta_{0},\gamma)$ is sufficiently general to cover initial estimators
with $\Sigma_{\sigma}=I_{T}$ and $\Sigma_{\varrho}=I_{n}$. As illustrated in
Section \ref{Example} quadratic moment conditions are often required to
identify parameters associated with spatial lags in the disturbance process
and may further increase the efficiency of estimators by exploiting spatial
correlation in the data generating process. Quadratic moment conditions have
been used routinely in the spatial literature. They can be motivated by
inspecting the score of the log-likelihood function of spatial models; see,
e.g., Anselin (1988, p. 64) for the score of a spatial ARAR(1,1) model.
Quadratic moment conditions were introduced by Kelejian and Prucha (1998,1999)
for GMM estimation of a cross sectional spatial ARAR(1,1) model, and have more
recently been used in the context of panel data; see, e.g., Kapoor, Kelejian
and Prucha (2007), Lee and Yu (2014).

Let ${h}_{it}=(h_{it}^{r})$ be some $1\times p_{t}$ vector of instruments,
where the instruments are measurable w.r.t. $\mathcal{B}_{n,t}\vee\mathcal{C}
$. Also, consider the $n\times1$ vectors $h_{t}^{r}=(h_{it}^{r})_{i=1,\ldots
,n}$, then by Assumption \ref{GA1} and Proposition \ref{VCLQ} we have the
following linear moment conditions for $t=1,\ldots,T-1$,%
\begin{equation}
E\left[
\begin{array}
[c]{c}%
h_{t}^{1\prime}u_{\ast t}^{+}(\theta_{0},\gamma)\\
\vdots\\
h_{t}^{p_{t}\prime}u_{\ast t}^{+}(\theta_{0},\gamma)
\end{array}
\right]  =E\left[  \sum_{i=1}^{n}h_{it}^{\prime}u_{\ast it}^{+}(\theta
_{0},\gamma)\right]  =0\label{MOML}%
\end{equation}
with $u_{\ast it}^{+}(\theta_{0},\gamma)=u_{it}^{+}(\theta_{0},\gamma_{\sigma
})/\varrho_{i}(\gamma_{\varrho})$. For the quadratic moment conditions, let
$a_{ij,t}=(a_{ij,t}^{r})$ be a $1\times q_{t}$ vector of weights, where the
weights are measurable w.r.t. $\mathcal{B}_{n,t}\vee\mathcal{C}$. Also
consider the $n\times n$ matrices $A_{t}^{r}=(a_{ij,t}^{r})_{i,j=1,\ldots,n}$
such that by Assumption \ref{GA1} and Proposition \ref{VCLQ}, and imposing the
constraint that $a_{ii,t}=0$ one obtains the following quadratic moment
conditions for $t=1,\ldots,T-1$,
\begin{equation}
E\left[
\begin{array}
[c]{c}%
u_{\ast t}^{+}(\theta_{0},\gamma)^{\prime}A_{t}^{1}u_{\ast t}^{+}(\theta
_{0},\gamma)\\
\vdots\\
u_{\ast t}^{+}(\theta_{0},\gamma)^{\prime}A_{t}^{q_{t}}u_{\ast t}^{+}%
(\theta_{0},\gamma)
\end{array}
\right]  =E\left[  \sum_{i=1}^{n}\sum_{j=1}^{n}{a}_{ij,t}^{\prime}u_{\ast
it}^{+}(\theta_{0},\gamma)u_{\ast jt}^{+}(\theta_{0},\gamma)\right]
=0.\label{MOMQ}%
\end{equation}
The requirement that $a_{ii,t}=0$ is generally needed for (\ref{MOMQ}) to
hold, unless $\Sigma_{0,\varrho}=I_{n}$. W.o.l.o.g. we also maintain that
$a_{ij,t}=a_{ji,t}$.

By allowing for subvectors of $h_{it}$ and $a_{ij,t}$ to be zero and by
redefining both $p_{t}$ and $q_{t}$ as $p_{t}+q_{t}$, the above moment
conditions can be stacked and written more compactly as%
\begin{align}
E\left[  \overline{m}_{t}(\theta_{0},\gamma)\right]   &  =0,\qquad
\text{with}\label{MOMLQ}\\
\overline{m}_{t}(\theta,\gamma)  &  =n^{-1/2}\sum_{i=1}^{n}h_{it}^{\prime
}u_{\ast it}^{+}(\theta,\gamma)+n^{-1/2}\sum_{i=1}^{n}\sum_{j=1}^{n}{a}%
_{ij,t}^{\prime}u_{\ast it}^{+}(\theta,\gamma)u_{\ast jt}^{+}(\theta
,\gamma).\nonumber
\end{align}
The example in Section \ref{Example} is a special case of $\overline{m}%
_{t}(\theta,\gamma)$ where $\overline{m}_{t}(\theta,\gamma)=\overline{m}%
_{n}\left(  \delta\right)  =\left[  \overline{m}_{n,\mathfrak{l}}%
(\delta)^{\prime},\overline{m}_{n,\mathfrak{q}}(\delta)^{\prime}\right]
^{\prime}$, $h_{it}=\left[  h_{i}^{1},...,h_{i}^{p},\mathbf{0}_{q}^{\prime
}\right]  ^{\prime}$, $a_{ij,t}=\left[  \mathbf{0}_{p}^{\prime},a_{ij}%
^{1},...,a_{ij}^{q}\right]  ^{\prime}$ and $\mathbf{0}_{k}$ is a $k\times1$
vector of zeros. The formulation in (\ref{MOMLQ}) allows for more general
forms of the empirical moment function by allowing for nontrivial linear
combinations of (\ref{MOML}) and (\ref{MOMQ}) in addition to simply stacking
both sets of moments. The particular form of (\ref{MOMLQ}) is motivated by a
need to minimize cross-sectional and temporal correlations between empirical
moments. Proposition \ref{VCLQ} in Appendix \ref{TRVCLQ} shows that only a
very judicious choice of moment conditions, moment weights $A_{t}$ and forward
differences $\Pi$ leads to a moment vector covariance matrix that can be
estimated reasonably easily.

Let $\theta=(\delta^{\prime},\rho^{\prime},f^{\prime})^{\prime}$ and
$\gamma=\left(  \gamma_{\varrho}^{\prime},\gamma_{\sigma}^{\prime}\right)
^{\prime}$ denote some vector of parameters, let $p=\sum_{t=1}^{T-1}p_{t}$,
and define the $p\times1$ normalized stacked sample moment vector
corresponding to (\ref{MOMLQ}) as
\begin{equation}
\overline{m}_{n}(\theta,\gamma)=\left[  \overline{m}_{1}(\theta,\gamma
)^{\prime},\ldots,\overline{m}_{T-1}(\theta,\gamma)^{\prime}\right]
.\label{GM3}%
\end{equation}
For some estimator $\bar{\gamma}_{n}$ of the auxiliary parameters $\gamma$ and
a $p\times p$ moment weights matrix $\tilde{\Xi}_{n}$ the GMM estimator for
$\theta_{0}$ is defined as
\begin{equation}
\tilde{\theta}_{n}\left(  \bar{\gamma}_{n}\right)  =\arg\min_{\theta
\in\underline{\Theta}_{\theta}}n^{-1}\overline{m}_{n}(\theta,\bar{\gamma}%
_{n})^{\prime}\tilde{\Xi}_{n}\overline{m}_{n}(\theta,\bar{\gamma}%
_{n})\label{GM4}%
\end{equation}
where the parameter space $\underline{\Theta}_{\theta}$ is defined in more
detail in Appendix \ref{Formal Assumptions}. The parameter $\gamma$ is a
nuisance parameter that can either be fixed at an a priori value or estimated
in a first step.

For the practical implementation of $\tilde{\theta}_{n}$ choices of the
instruments $h_{it}$ and weights $a_{ijt}$ need to be made. Clearly
$x_{it}^{o}$ and $z_{i}$ are available as possible instruments. However, when
the spatial weights are measurable w.r.t. $\mathcal{B}_{n,t}\vee\mathcal{C}$,
then taking guidance from the spatial literature the instrument vector
$h_{it}$ may not only contain $x_{it}^{o}$ and $z_{i}$, but also spatial lags
thereof. One motivation for this is that for classical spatial autoregressive
models the conditional mean of the explanatory variables can be expressed as a
linear combination of the exogenous regressors and spatial lags thereof,
including higher order spatial lags. Again, when the spatial weights are
measurable w.r.t. $\mathcal{B}_{n,t}\vee\mathcal{C}$, then taking guidance
from the spatial literature possible choices for the matrices $A_{t}%
^{r}=(a_{ijt}^{r})$ include the spatial weight matrices up to period $t$ and
powers thereof (with the diagonal elements set to zero). With endogenous
weights, in the sense that the weights also depend on contemporaneous
idiosyncratic disturbances, possible candidates for $A_{t}^{r}$ can be based
on projections of the weights onto $\mathcal{B}_{n,t}\vee\mathcal{C}$, or can
be constructed from spatial weight matrices up to period $t-1$. We note that
the case where the spatial weights are measurable w.r.t. $\mathcal{B}%
_{n,t}\vee\mathcal{C}$ already covers situations where endogeneity only stems
from the spatial weights being dependent on the unit specific effects.

The optimal weight matrix of a GMM\ estimator based on both linear and
quadratic moment conditions depends on the variance covariances of linear
quadratic forms based on forward differenced disturbances. Simplifying them as
much as possible is critical to the implementation of the estimator.
Proposition \ref{VCLQ} in Appendix \ref{TRVCLQ} provides the conditions under
which such simplifications can be achieved. The proposition considers linear
quadratic forms of the form $u_{t}^{+\prime}A_{t}u_{t}^{\times}+u_{t}%
^{+\prime}a_{t}$ and $u_{t}^{+\prime}B_{t}u_{t}^{\times}+u_{t}^{+\prime}b_{t}$
where $u_{t}^{+}=\Pi u_{t}$ is as defined in (\ref{FD1}) and $u_{t}^{\times
}=\Gamma u_{t}$ with $\Gamma=\left[  \gamma_{1}^{\prime},..,\gamma_{T}%
^{\prime}\right]  ^{\prime}$ where $\gamma_{t}=[0,\ldots,0,\gamma_{tt}%
,\ldots,\gamma_{tT}]$ is some vector of forward differenced disturbances. The
transformation $\Gamma,$ unlike $\Pi, $ may not be orthogonal. The matrix
$\Gamma$ is taken to satisfy $\Gamma f=0 $ to ensure that the transformation
eliminates the unit specific components. Proposition \ref{VCLQ} provides
results on the variance and covariances of linear quadratic forms under
assumptions which are sufficiently general to cover the linear quadratic
moment conditions considered in (\ref{MOMLQ}). The following remarks are based
on those results.

First consider the homoskedastic case where $\Sigma_{\varrho}=\varrho^{2}I $.
A sufficient condition for the validity of moment conditions of the from
$E\left[  u_{t}^{+\prime}A_{t}u_{t}^{\times}+u_{t}^{+\prime}a_{t}%
|\mathcal{C}\right]  =0$ is that $\operatorname*{tr}(A_{t})=0$. Consistent
with this observation and under cross sectional homoskedasticity, quadratic
moment conditions where only the trace of the weight matrices is assumed to be
zero, have been considered frequently in the spatial literature\footnote{See,
e.g., Kelejian and Prucha (1998,1999), Lee and Liu (2010) and Lee and Yu
(2014).}. However, $\operatorname*{tr}(A_{t})=0$ does not insure that the
linear quadratic forms are uncorrelated across time even in the case of
orthogonally transformed disturbances, i.e., $\Pi=\Gamma$ and $\Pi
\Sigma_{\sigma}\Pi^{\prime}=I$. This is in contrast to the case of pure linear
forms (where $A_{t}=B_{t}=0$).

Next consider the case where (possibly) $\Sigma_{\varrho}\neq\varrho^{2}I $.
In this case a sufficient condition for $E\left[  u_{t}^{+\prime}A_{t}%
u_{t}^{\times}+u_{t}^{+\prime}a_{t}|\mathcal{C}\right]  =0$ is that
$\operatorname*{vec}\nolimits_{D}(A_{t})=0$ where $\operatorname*{vec}%
\nolimits_{D}(A_{t}) $ is the vector of diagonal elements of $A_{t}%
$.\textbf{\ }We note that with $\operatorname*{vec}\nolimits_{D}(A_{t})=0$ no
restrictions on $E\left[  u_{it}^{2}|\mathcal{B}_{n,i,t}\vee\mathcal{C}%
\right]  $ are necessary to ensure $E\left[  u_{t}^{+\prime}A_{t}u_{t}%
^{\times}+u_{t}^{+\prime}a_{t}|\mathcal{C}\right]  =0 $. Proposition
\ref{VCLQ} in Appendix \ref{TRVCLQ} shows that covariances of linear quadratic
forms generally depend on random functionals $\mathcal{K}_{1}$ and
$\mathcal{K}_{2}.$ An inspection of the quantities $\mathcal{K}_{1} $ and
$\mathcal{K}_{2}$ shows that strengthening the assumptions to
$\operatorname*{vec}\nolimits_{D}(A_{t})=\operatorname*{vec}\nolimits_{D}%
(B_{t})=0$ for all $t$ and using orthogonally transformed disturbances ensures
that $\mathcal{K}_{1}=\mathcal{K}_{2}=0$, and thus simplifies the optimal GMM
weight matrix. In particular, under these restrictions the expressions for the
contemporaneous covariances on the r.h.s. of (\ref{CV2}) simplify to $E\left[
\operatorname*{tr}(A_{t}\Sigma_{\varrho}(B_{t}+B_{t}^{\prime})\Sigma_{\varrho
})|\mathcal{C}\right]  +E\left[  a_{t}^{\prime}\Sigma_{\varrho}b_{t}%
|\mathcal{C}\right]  $, while (\ref{CV3}) implies that the linear quadratic
forms are uncorrelated over time. Another important implication of Proposition
\ref{VCLQ} is that under the restrictions $\operatorname*{vec}\nolimits_{D}%
(A_{t})=\operatorname*{vec}\nolimits_{D}(B_{t})=0$ the covariances between
linear sample moments and quadratic sample moments are zero. Expressions for
the variance of linear quadratic forms are obtained as a special case where
$A_{t}=B_{t}$ and $a_{t}=b_{t}$. The results of Proposition \ref{VCLQ} are
consistent with some specialized results given in Kelejian and Prucha (2001,
2010) under the assumption that the coefficients $a_{t}$ and $A_{t}$ in the
linear quadratic forms are non-stochastic.

\subsection{Consistency}

Consistent with the assumptions in Appendix \ref{Formal Assumptions} let
$\theta_{\ast}=\lim_{n\rightarrow\infty}\theta_{n,0}$ and $\gamma_{\ast}%
=\lim_{n\rightarrow\infty}\gamma_{n,0}$. Furthermore, consider a sequence of
estimators of the auxiliary parameters $\bar{\gamma}_{n}%
\overset{p}{\rightarrow}\bar{\gamma}_{\ast}$. The objective function of the
GMM estimator $\tilde{\theta}_{n}\left(  \bar{\gamma}_{n}\right)  $ defined in
(\ref{GM4}) is then given by $\mathcal{R}_{n}(\theta)=$ $n^{-1}\overline
{m}_{n}(\theta,\bar{\gamma}_{n})^{\prime}\tilde{\Xi}_{n}\overline{m}%
_{n}(\theta,\bar{\gamma}_{n})$. Correspondingly consider the \textquotedblleft
limiting\textquotedblright\ objective function $\mathcal{R}(\theta
)=\mathfrak{m}(\theta)^{\prime}\Xi\mathfrak{m}(\theta)$ with $\mathfrak{m}%
(\theta)=\operatorname*{plim}_{n\rightarrow\infty}n^{-1/2}\overline{m}%
_{n}(\theta,\bar{\gamma}_{\ast})$. Because $\mathfrak{m}(\theta)$ and $\Xi$
are generally stochastic in the presence of common factors it follows that
$\mathcal{R}(\theta)$ and the minimizer $\theta_{\ast}$ are also generally
stochastic. The consistency proof needs to account for the randomness in
$\mathcal{R}(\theta)$ and $\theta_{\ast}.$ The consistency results given below
build, in particular, on Gallant and White (1988), White (1984), Newey and
McFadden (1994), P\"{o}tscher and Prucha (1997, ch 3).\footnote{The latter
reference also provides citations to the earlier fundamental contributions to
the consistency proof of M-estimators in the statistics literature. We would
like to thank Benedikt P\"{o}tscher for very helpful discussions on extending
the notion of identifiable uniqueness to stochastic analogue functions, and
the propositions presented in this section.} We first establish a general
result for the consistency of estimators for situations where the limiting
objective function and the minimizers are stochastic, which is given as
Proposition \ref{PROP_CON1} in Appendix \ref{APPPR}. This proposition also
extends the notion of identifiable uniqueness to stochastic limit functions
and minimizers. We then use this result to proof the following theorem
establishing consistency.

\begin{theorem}
\label{TH2}(Consistency) Suppose Assumptions \ref{GA1}-\ref{Ass_Con} hold for
some estimator of the auxiliary parameters $\bar{\gamma}_{n}%
\overset{p}{\rightarrow}\bar{\gamma}_{\ast}$. Then $\tilde{\theta}_{n}\left(
\bar{\gamma}_{n}\right)  -\theta_{n,0}\overset{p}{\rightarrow}0$ as
$n\rightarrow\infty$.
\end{theorem}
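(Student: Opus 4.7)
The plan is to reduce the theorem to the general consistency result for M-estimators with stochastic limiting objective functions stated as Proposition \ref{PROP_CON1} in Appendix \ref{APPPR}. The two things that must be checked for that proposition to apply are: (a) a suitable uniform-in-$\theta$ stochastic convergence of the sample criterion $\mathcal{R}_n(\theta)$ to the stochastic limit $\mathcal{R}(\theta) = \mathfrak{m}(\theta)^{\prime}\Xi\mathfrak{m}(\theta)$ on the compact parameter space $\underline{\Theta}_\theta$, and (b) identifiable uniqueness of $\theta_{n,0}$ as the (random) minimizer of $\mathcal{R}(\theta)$. Compactness of $\underline{\Theta}_\theta$ and positive definiteness / convergence of $\tilde{\Xi}_n$ to $\Xi$ are parts of Assumption \ref{Ass_Con} and can be invoked directly.

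For step (a), I would first write
\[
\overline{m}_n(\theta,\bar{\gamma}_n) - \mathfrak{m}(\theta) = \bigl[\overline{m}_n(\theta,\bar{\gamma}_n)-\overline{m}_n(\theta,\bar{\gamma}_\ast)\bigr] + \bigl[\overline{m}_n(\theta,\bar{\gamma}_\ast)-\mathfrak{m}(\theta)\bigr].
\]
The second bracket is controlled pointwise in $\theta$ by a law of large numbers for the linear and quadratic forms $h_{it}^{\prime}u^{+}_{\ast it}$ and $a_{ij,t}^{\prime}u^{+}_{\ast it}u^{+}_{\ast jt}$, using the mean-zero property and the variance expressions from Proposition \ref{VCLQ} together with the $2+\delta$ moment bounds of Assumption \ref{GA1}. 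Upgrading pointwise to uniform convergence proceeds via a stochastic equicontinuity argument: because the weights $\pi_{ts}(f,\gamma_\sigma)$ from the generalized Helmert construction (Proposition \ref{QHELMERT}) and the scaling $\varrho_i(\gamma_\varrho)$ are smooth in their arguments, and $u^{+}_{t}(\theta,\gamma_\sigma)$ is affine in $\delta$ and polynomial in $\rho$ through $\underline{R}_s(\rho)$, the criterion is Lipschitz in $\theta$ uniformly over $\underline{\Theta}_\theta$ with a sample-bounded-in-probability constant. The first bracket is then negligible by a first-order expansion in $\gamma$ combined with $\bar{\gamma}_n \overset{p}{\rightarrow}\bar{\gamma}_\ast$.

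For step (b) the target is the stochastic analogue of a well-separated minimum: for every $\epsilon>0$, $\inf_{\Vert\theta-\theta_{n,0}\Vert\geq\epsilon}\mathcal{R}(\theta)$ stays bounded away from $\mathcal{R}(\theta_{n,0})=0$ in probability. Since $\Xi$ is positive definite this reduces to a uniqueness statement for $\mathfrak{m}(\theta)=0$. Here the identification conditions embedded in Assumption \ref{Ass_Con} (the generalization of Lemma \ref{Consistency_Lemma1}) are decisive: the linear moments pin down $\beta$, $\rho$, and $f$ for any fixed $\lambda$ through the exogeneity structure of $z$ and $x$, while the quadratic moments (with zero-diagonal weights, enabled by Proposition \ref{VCLQ}) identify $\lambda$ even when linear instruments are collinear, exactly as in the displayed calculation (\ref{Lim_quad}) for the exemplary model. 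Because $\mathcal{R}$ inherits randomness from common-shock-measurable objects ($f_t$, the $M_{p,t}$, and regressors), this uniqueness must be verified in the stochastic sense supplied by Proposition \ref{PROP_CON1}.

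The main obstacle is step (b). Uniform convergence is largely mechanical once Proposition \ref{VCLQ} and the smoothness of the Helmert weights are in hand, whereas verifying the stochastic well-separated-minimum property with a genuinely random criterion and endogenous spatial weights is delicate. The essential leverage comes from two structural facts emphasized earlier: the forward differencing of Proposition \ref{QHELMERT} makes $\mathfrak{m}(\theta_{n,0})$ exactly zero (not only zero in expectation) in the $\operatorname{plim}$, and the orthogonality between linear and quadratic moments — a consequence of $\operatorname{vec}_D(A_t)=0$ combined with the generalized Helmert transformation — makes $\mathcal{R}$ additively separable into a linear and a quadratic piece, so the two identification channels can be applied independently even inside a random limiting criterion. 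Having established (a) and (b), Proposition \ref{PROP_CON1} delivers $\tilde{\theta}_n(\bar{\gamma}_n)-\theta_{n,0}\overset{p}{\rightarrow}0$.
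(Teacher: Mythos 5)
Your overall architecture matches the paper's: reduce the theorem to Proposition \ref{PROP_CON1} by verifying (a) uniform convergence of the criterion and (b) identifiable uniqueness of the stochastic minimizer. However, there is a substantive mismatch in how you propose to discharge these two conditions, and as written your step (b) would not close. Both ingredients are \emph{maintained assumptions} of the theorem, not things to be derived: Assumption \ref{Unique}(ii) directly postulates $\sup_{\theta\in\underline{\Theta}_{\theta}}\Vert n^{-1/2}\overline{m}_{n}(\theta,\bar{\gamma}_{n})-\mathfrak{m}(\theta)\Vert=o_{p}(1)$, and Assumption \ref{Unique}(i), i.e.\ condition (\ref{GM5}), directly postulates identifiable uniqueness of $\theta_{\ast}$ in terms of $\mathfrak{m}$. (You attribute the identification condition to Assumption \ref{Ass_Con}, but that assumption only concerns the weight matrix $\tilde{\Xi}_{n}\overset{p}{\rightarrow}\Xi$.) Your plan to establish identifiable uniqueness from primitives --- linear moments pinning down $\beta,\rho,f$ and quadratic moments identifying $\lambda$ via the additive separability of the criterion --- is exactly the kind of argument the paper runs only for the \emph{exemplary} model in Lemma \ref{Consistency_Lemma1}; at the generality of Section \ref{Model} (endogenous, time-varying weights with no structure on network formation) it cannot be carried out and is precisely why (\ref{GM5}) is assumed. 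Treating it as "the main obstacle" to be overcome inside this proof is therefore a gap: the argument you sketch is not executable here.

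What the paper's proof actually requires, and what your proposal omits, are two short bridging steps. First, (\ref{GM5}) is a statement about $\mathfrak{m}$, whereas Proposition \ref{PROP_CON1} needs a well-separated minimum of $\mathcal{R}$; the link is $\mathcal{R}(\theta_{\ast})=\mathfrak{m}(\theta_{\ast})'\Xi\mathfrak{m}(\theta_{\ast})=0$ together with $\mathcal{R}(\theta)\geq\lambda_{\min}(\Xi)\Vert\mathfrak{m}(\theta)\Vert^{2}$ and $\lambda_{\min}(\Xi)>0$ a.s.\ from Assumption \ref{Ass_Con}. Second, uniform convergence of the moment vector plus $\tilde{\Xi}_{n}-\Xi=o_{p}(1)$ must be converted into uniform convergence of the quadratic-form criterion; this uses the a.s.\ compact range of $\mathfrak{m}$ from Lemma \ref{Uniform} and is exactly Lemma \ref{PROP_CON3}. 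With those two steps in place, Proposition \ref{PROP_CON1} yields $\tilde{\theta}_{n}(\bar{\gamma}_{n})-\theta_{\ast}\overset{p}{\rightarrow}0$ and hence $\tilde{\theta}_{n}(\bar{\gamma}_{n})-\theta_{n,0}\overset{p}{\rightarrow}0$. Your lower-level derivations of the LLN and stochastic equicontinuity for step (a) are not wrong in spirit, but they re-prove what Assumption \ref{Unique}(ii) already grants and are not needed for this theorem.
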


Assumptions \ref{Unique}(i) and \ref{Ass_Con} in the appendix are crucial in
establishing that $\theta_{\ast}$ is identifiable unique in the sense of
Proposition \ref{PROP_CON1}. Assumptions \ref{Unique}(iii) is not required by
the above theorem. We note that the theorem covers the case where $\bar
{\gamma}_{n}=\tilde{\gamma}_{n}$ and $\tilde{\gamma}_{n}$ is a consistent
estimator of the auxiliary parameters, as well as the case where $\bar{\gamma
}_{n}=\bar{\gamma}_{\ast}=\bar{\gamma}$ for all $n.$ The latter case is
relevant for first stage estimators that are based on arbitrarily fixed
variance parameters. For $\gamma_{\sigma}$ an obvious choice is $\bar{\gamma
}_{\sigma}=\mathbf{1}_{T}.$ For $\gamma_{\varrho}$ convenient choices depend
on the specifics of the model. In many situations the first stage estimator
will be based on the choice $\varrho_{i}^{2}(\bar{\gamma}_{\varrho})=1$.

\subsection{Limit Theory\label{CLT}}

The limiting distribution of our GMM estimators depends on the limiting
distribution of the sample moment vector $\overline{m}_{n}=\overline{m}%
_{n}\left(  \theta_{0},\gamma_{0,\sigma},\gamma_{\varrho}\right)  $ defined by
(\ref{GM3}), evaluated at the true parameters, except possible for the
specification of the cross sectional variance components $\varrho_{i}^{2}$.
The reason for this is to accommodate both leading cases $\varrho_{i}%
^{2}=\varrho_{0,i}^{2}$ and $\varrho_{i}^{2}=1$. Our derivation of the
limiting distribution of $\overline{m}_{n}$ is based on Proposition \ref{TH1}
in Appendix \ref{APPPR}.

Proposition \ref{TH1} can be of interest in itself as a CLT for vectors of
linear quadratic forms of transformed innovations. As a special case the
theorem also covers linear quadratic forms in the original innovations: for
$f_{T}=\sigma_{T}=1$, $f_{t}=0$ for $t<T$ and $\varrho_{i}^{2}=\varrho
_{0,i}^{2}$ we have $u_{\ast it}^{+}=u_{it}/(\sigma_{0,t}\varrho_{0,i})$. The
result generalizes Theorem 2 in Kuersteiner and Prucha (2013). We emphasize
that our result differs from existing results on CLTs for quadratic forms in
various respects:\footnote{See, e.g., Atchad and Cattaneo (2012), Doukhan et
al. (1996), Gao and Hong (2007), Giraitis and Taqqu (1998), and Kelejian and
Prucha (2001) for recent contributions. To the best of our knowledge the
result is also not covered in the literature on $U$-statistics; see, e.g.,
Koroljuk and Borovskich (1994) for a review.} First it considers linear
quadratic forms in a panel framework. To the best of our knowledge, other
results only consider single indexed variables. As stressed in Kuersteiner and
Prucha (2013) the widely used CLT for martingale differences by Hall and Heyde
(1980) is not generally compatible with a panel data situation. Second,
Proposition \ref{TH1} allows for the presence of common factors which can be
handled, because Proposition \ref{TH1} establishes convergence in distribution
$\mathcal{C}$-stably. Third, the theorem covers orthogonally transformed
variables, and demonstrates how these transformations very significantly
simplify the correlation structure between the linear quadratic forms.

Convergence in distribution $\mathcal{C}$-stably of a sequence $\overline
{m}_{n}$ is a property of the random vectors, and not just of the
corresponding distribution functions. It is equivalent to convergence in
distribution of the sequence $\overline{m}_{n}$ joint with any $\mathcal{C}$
measurable random variable. Joint convergence is a necessary condition for the
continuous mapping theorem, which is used to derive the asymptotic
distribution of $\tilde{\theta}_{n}\left(  \tilde{\gamma}_{n}\right)  .$ The
concept of stable convergence was introduced by Renyi (1963). Aldous and
Eagleson (1978) show the equivalence of stable convergence and weak
convergence in $L_{1}$ of the (conditional) characteristic
function\footnote{For a definition of weak convergence in $L_{1}$ see Aldous
and Eagleson (1978). See also the discussion after Propoisition A.3.2.IV in
Daley and Vere-Jones (2008).} of the random sequence, as well as convergence
of the distribution conditional on any fixed event in $\mathcal{F}$. These
notions are slightly weaker than almost sure convergence of the (conditional)
characteristic function established in Eagelson (1975), which implies stable
convergence. Similar to our setup, Eagelson (1975) considers convergence
conditional on a sub-sigma field of $\mathcal{F}$. The discussion in Eagelson
(1975, p.558) may lead one to consider a heuristic argument which establishes
convergence in distribution of $\overline{m}_{n}$ conditional on $\mathcal{C}%
$, and then attempts to obtain a limit law by averaging over $\mathcal{C}$.
The intuition is largely valid, but a formal argument requires additional
assumptions; see, e.g., Theorem 2 and Corollary 2 in Eagleson (1975), which
maintain almost sure convergence of the square processes and measurability
requirements. Corollary 2 in Eagleson (1975) is a result that is very similar
to Theorem 1 in Kuersteiner and Prucha (2013), except that the latter only
requires convergence in probability of the square processes, while delivering
convergence in distribution $\mathcal{C}$-stably rather than just convergence
in distribution. This theorem is similar to the CLT of Hall and Heyde (1980),
but weakens an assumption on the conditioning information sets, which is
restrictive for panel data.

The next theorem establishes basic properties for the limiting distribution of
the GMM estimator $\tilde{\theta}_{n}(\tilde{\gamma}_{n})$ when $\tilde
{\gamma}_{n}$ is a consistent estimator of the auxiliary parameters so that
$\tilde{\gamma}_{n}-\gamma_{n,0}\overset{p}{\rightarrow}0$ and $\gamma
_{n,0}\overset{p}{\rightarrow}\gamma_{\ast}$. Let $G_{n}(\theta,\gamma
)=\partial n^{-1/2}\overline{m}_{n}(\theta,\gamma)/\partial\theta$ and
$G(\theta)=\operatorname*{plim}_{n\rightarrow\infty}G_{n}(\theta,\gamma_{\ast
})$ as defined in Assumption \ref{Unique}. To establish our results we show
that $G(\theta)$ exists, and that $G(\theta)$ is $\mathcal{C}$-measurable for
all $\theta\in\underline{\Theta}_{\theta}$, and continuous in $\theta$. Let
$G=G(\theta_{\ast})$ and observe that $G$ is $\mathcal{C}$-measurable, since
$\theta_{\ast}$ is $\mathcal{C}$-measurable in light of Assumption \ref{M-mat}.

\begin{theorem}
\label{TH3} (Asymptotic Distribution). Suppose Assumptions \ref{GA1}%
-\ref{Ass_Con} holds for $\bar{\gamma}=\tilde{\gamma}_{n}$ with $\tilde
{\gamma}_{n}-\gamma_{n,0}=O_{p}(n^{-1/2})$ and $\varrho_{i}^{2}=\varrho
_{0,i}^{2}=\varrho_{i}^{2}(\gamma_{0,\varrho})$, and that $G$ has full column
rank $a.s.$ Then, \newline(i)
\[
n^{1/2}(\tilde{\theta}_{n}\left(  \tilde{\gamma}_{n}\right)  -\theta
_{n,0})\overset{d}{\rightarrow}\Psi^{1/2}\xi_{\ast},\quad\text{as
}n\rightarrow\infty,
\]
where $\xi_{\ast}$ is independent of $\mathcal{C}$ (and hence of $\Psi$),
$\xi_{\ast}\sim N(0,I_{p_{\theta}})$ and
\begin{equation}
\Psi=(G^{\prime}\Xi G)^{-1}G^{\prime}\Xi V\Xi G(G^{\prime}\Xi G)^{-1}%
.\label{GM14}%
\end{equation}
(ii) Suppose $B$ is some $q\times p_{\theta}$ matrix that is $\mathcal{C}$
measurable with finite elements and rank $q$ $a.s.$, then
\[
Bn^{1/2}(\tilde{\theta}_{n}\left(  \tilde{\gamma}_{n}\right)  -\theta
_{n,0})\overset{d}{\rightarrow}(B\Psi B^{\prime})^{1/2}\xi_{\ast\ast}\text{,}%
\]
where $\xi_{\ast\ast}\sim N\left(  0,I_{q}\right)  $, and $\xi_{\ast\ast} $
and $\mathcal{C}$ (and thus $\xi_{\ast\ast}$ and $B\Psi B^{\prime}$) are independent.
\end{theorem}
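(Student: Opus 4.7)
The plan is to derive the limit by the usual mean-value expansion of the first-order conditions, taking care that every limit statement is made $\mathcal{C}$-stably so that the random norming induced by the common shocks is correctly propagated through to the mixed-normal limit.

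First, by Theorem \ref{TH2}, $\tilde\theta_n(\tilde\gamma_n) - \theta_{n,0} \overset{p}{\to} 0$, and under Assumption \ref{Ass_Con} the estimator is eventually in the interior of $\underline\Theta_\theta$, so the first-order conditions $G_n(\tilde\theta_n, \tilde\gamma_n)' \tilde\Xi_n \overline m_n(\tilde\theta_n, \tilde\gamma_n) = 0$ hold with probability approaching one. A componentwise mean-value expansion of $\overline m_n(\,\cdot\,, \tilde\gamma_n)$ around $\theta_{n,0}$ produces an intermediate Jacobian $\bar G_n$; the smoothness and uniform-convergence conditions of Assumption \ref{Ass_Con} give $\bar G_n \overset{p}{\to} G$ and $G_n(\tilde\theta_n, \tilde\gamma_n) \overset{p}{\to} G$. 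Combining these with $\tilde\Xi_n \overset{p}{\to} \Xi$ and the a.s.\ full-rank hypothesis on $G$ yields
\[
n^{1/2}(\tilde\theta_n(\tilde\gamma_n) - \theta_{n,0}) = -(G'\Xi G)^{-1} G'\Xi \, n^{1/2}\overline m_n(\theta_{n,0}, \tilde\gamma_n) + o_p(1).
\]

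Second, I would absorb the nuisance parameter. A Taylor expansion of $\overline m_n(\theta_{n,0}, \cdot)$ around $\gamma_{n,0}$, together with $\tilde\gamma_n - \gamma_{n,0} = O_p(n^{-1/2})$, reduces $n^{1/2}\overline m_n(\theta_{n,0}, \tilde\gamma_n)$ to $n^{1/2}\overline m_n(\theta_{n,0}, \gamma_{n,0}) + o_p(1)$: because $\varrho_i^2 = \varrho_{0,i}^2$, the forward-differencing construction of Proposition \ref{QHELMERT} together with the zero-diagonal restriction on the $A_t^r$ force the probability limit of the $\gamma$-derivative of each moment to vanish at the true parameter, so that the $\tilde\gamma_n$-contribution is asymptotically negligible after premultiplication by $G'\Xi$. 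The CLT of Proposition \ref{TH1} then delivers
\[
n^{1/2}\overline m_n(\theta_{n,0}, \gamma_{n,0}) \overset{d}{\longrightarrow} V^{1/2}\zeta \quad \mathcal{C}\text{-stably},
\]
where $\zeta \sim N(0, I_p)$ is independent of $\mathcal{C}$ and $V$ is the $\mathcal{C}$-measurable asymptotic variance computed from Proposition \ref{VCLQ}.

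Third, because $G$, $\Xi$, and $V$ are all $\mathcal{C}$-measurable, $\mathcal{C}$-stability yields joint convergence of $n^{1/2}\overline m_n(\theta_{n,0}, \gamma_{n,0})$ with $(G, \Xi, V)$, and the continuous mapping theorem gives
\[
n^{1/2}(\tilde\theta_n(\tilde\gamma_n) - \theta_{n,0}) \overset{d}{\longrightarrow} -(G'\Xi G)^{-1} G'\Xi V^{1/2}\zeta.
\]
Conditionally on $\mathcal{C}$ this limit is $N(0,\Psi)$ with $\Psi$ as in (\ref{GM14}), and by extracting a $\mathcal{C}$-measurable square root one obtains the representation $\Psi^{1/2}\xi_*$ with $\xi_* \sim N(0, I_{p_\theta})$ independent of $\mathcal{C}$; this proves (i). Part (ii) follows immediately, since $B$ is $\mathcal{C}$-measurable: $Bn^{1/2}(\tilde\theta_n - \theta_{n,0})$ is $\mathcal{C}$-conditionally $N(0, B\Psi B')$, and the a.s.\ rank-$q$ hypothesis on $B$ together with the nonsingularity of $\Psi$ permits the factorization $(B\Psi B')^{1/2}\xi_{**}$ with $\xi_{**} \sim N(0, I_q)$ independent of $\mathcal{C}$.

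The main obstacle I anticipate is the nuisance-parameter step: one must verify that under the generalized Helmert transformation and the zero-diagonal weights the $O_p(n^{-1/2})$ estimation error in $\tilde\gamma_n$ does not leave a residual $O_p(1)$ contribution, because otherwise an additional term of the form $\Lambda \cdot n^{1/2}(\tilde\gamma_n - \gamma_{n,0})$ would appear in the sandwich and alter both $V$ and the limit law. This relies critically on the orthogonality of linear and quadratic moments established in Proposition \ref{VCLQ} and on the explicit structure of the weights $\pi_{ts}(f_0, \gamma_\sigma)$ given in Proposition \ref{QHELMERT}.
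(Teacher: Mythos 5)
Your proposal is correct and follows essentially the same route the paper takes: the paper itself states that Theorem \ref{TH3} "follows from standard arguments" (mean-value expansion of the first-order conditions, negligibility of the $O_p(n^{-1/2})$ nuisance-parameter error via the vanishing $\gamma$-derivative, and the $\mathcal{C}$-stable CLT of Proposition \ref{TH1} combined with the continuous mapping theorem), relegating details to the supplementary appendix. The only small inaccuracies are attributional: the interiority of $\theta_{\ast}$ and the uniform convergence of the Jacobian come from Assumptions \ref{M-mat} and \ref{Unique}(iii) rather than from Assumption \ref{Ass_Con} (which only concerns $\tilde{\Xi}_{n}$), and the vanishing of the $\gamma$-derivative is directly assumed in Assumption \ref{Unique}(iii) rather than derived from the Helmert/zero-diagonal structure, though your heuristic for why it holds is consistent with the paper's discussion.
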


The matrix $V$ is defined in Assumption \ref{GA3}. Since $\varrho_{i}%
^{2}=\varrho_{0,i}^{2}$ the expression simplifies to $V=$
$\operatorname*{diag}\nolimits_{t=1}^{T-1}\left(  V_{t}\right)  $ with
$V_{t}=V_{t}^{h}+2V_{t}^{a}$, where $n^{-1}\sum_{i=1}^{n}E\left[  \left.
h_{it}^{\prime}h_{it}\right\vert \mathcal{C}\right]  \overset{p}{\rightarrow
}V_{t}^{h}$ and $n^{-1}\sum_{i=1}^{n}\sum_{j=1}^{n}E\left[  \left.  {a}%
_{ij,t}^{\prime}{a}_{ij,t}\right\vert \mathcal{C}\right]
\overset{p}{\rightarrow}V_{t}^{a}$. By Assumption \ref{GA3} a consistent
estimator of $V$ is%
\begin{equation}
\widetilde{V}_{n}=\operatorname*{diag}\nolimits_{t=1}^{T-1}\left(  V_{t,n}%
^{h}+2V_{t,n}^{a}\right)  ,\label{Optimal_V}%
\end{equation}
where $V_{t,n}^{h}=n^{-1}\sum_{i=1}^{n}h_{it}^{\prime}h_{it}$ and $V_{t,n}%
^{a}=n^{-1}\sum_{i=1}^{n}\sum_{j=1}^{n}{a}_{ij,t}^{\prime}{a}_{ij,t}$.

For efficiency, conditional on $\mathcal{C}$, we select $\Xi=V^{-1}$, in which
case $\Psi=\left[  G^{\prime}V^{-1}G\right]  ^{-1}$. The corresponding
feasible efficient GMM estimator is then obtained by choosing $\tilde{\Xi}%
_{n}=\widetilde{V}_{n}^{-1}$yielding%

\begin{equation}
\hat{\theta}_{n}=\arg\min_{\theta\in\underline{\Theta}_{\theta}}\overline
{m}_{n}(\theta,\tilde{\gamma}_{n})^{\prime}\widetilde{V}_{n}^{-1}\overline
{m}_{n}(\theta,\tilde{\gamma}_{n}).\label{GM16}%
\end{equation}
Clearly $\widetilde{V}_{(n)}^{-1}\overset{p}{\rightarrow}V^{-1}$ by Assumption
\ref{GA3}, with $V^{-1}$ being $\mathcal{C}$-measurable with $a.s. $ finite
elements, and with $V^{-1}$ positive definite $a.s.$ Furthermore, from the
proof of Theorem \textbf{\ }\ref{TH3}, $G_{n}(\hat{\theta}_{n},\tilde{\gamma
}_{n})\overset{p}{\rightarrow}G$ where $G$ is $\mathcal{C}$-measurable with
$a.s.$ finite elements, and with full column rank $a.s$., we have that
$\hat{\Psi}_{n}=\left[  G_{n}^{\prime}(\hat{\theta}_{n},\tilde{\gamma}%
_{n})\widetilde{V}_{n}^{-1}G_{n}(\hat{\theta}_{n},\tilde{\gamma}_{n})\right]
^{-1}$ is a consistent estimator for $\Psi$.

Let $R$ be a $q\times p_{\theta}$ full row rank matrix and $r$ a $q\times1$
vector, and consider the Wald statistic
\begin{equation}
T_{n}=\left\Vert \left(  R\hat{\Psi}_{n}R^{\prime}\right)  ^{-1/2}\sqrt
{n}(R\hat{\theta}_{n}-r)\right\Vert ^{2}\label{GM17}%
\end{equation}
to test the null hypothesis $H_{0}:R\theta_{n,0}=r$ against the alternative
$H_{1}:R\theta_{n,0}\neq r$. The next theorem shows that $T_{n}$ is
distributed asymptotically chi-square, even if $\Psi$ is allowed to be random
due to the presence of common factors represented by $\mathcal{C}$. A similar
result is shown by Andrews (2005).

\begin{theorem}
\label{TH4} Suppose the assumptions of Theorem \ref{TH3} hold. Then
\[
\hat{\Psi}_{n}^{-1/2}\sqrt{n}(\hat{\theta}_{n}-\theta_{n,0}%
)\overset{d}{\rightarrow}\xi_{\ast}\sim N(0,I_{p_{\theta}}).
\]
Furthermore%
\[
P\left(  T_{n}>\chi_{q,1-\alpha}^{2}\right)  \rightarrow\alpha
\]
where $\chi_{q,1-\alpha}^{2}$ is the $1-\alpha$ quantile of the chi-square
distribution with $q$ degrees of freedom.
\end{theorem}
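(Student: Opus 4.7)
The plan is to derive both assertions from Theorem \ref{TH3} combined with the consistency $\hat{\Psi}_{n}\overset{p}{\rightarrow}\Psi$ that was already established in the discussion preceding the theorem, exploiting the fact that $\Psi$ is $\mathcal{C}$-measurable while the standard normal limit $\xi_{\ast}$ is independent of $\mathcal{C}$. Because $\Psi$ is random, the classical Slutsky argument does not apply verbatim; the argument instead rests on the $\mathcal{C}$-stable convergence that underlies Theorem \ref{TH3}, which allows $\mathcal{C}$-measurable quantities to be plugged in as if they were deterministic conditional on $\mathcal{C}$.

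For part (i), I would start from Theorem \ref{TH3}(i), which yields $\sqrt{n}(\hat{\theta}_{n}-\theta_{n,0})\overset{d}{\rightarrow}\Psi^{1/2}\xi_{\ast}$ with $\xi_{\ast}\sim N(0,I_{p_{\theta}})$ independent of $\mathcal{C}$. Decomposing
\[
\hat{\Psi}_{n}^{-1/2}\sqrt{n}(\hat{\theta}_{n}-\theta_{n,0})=\Psi^{-1/2}\sqrt{n}(\hat{\theta}_{n}-\theta_{n,0})+\bigl(\hat{\Psi}_{n}^{-1/2}-\Psi^{-1/2}\bigr)\sqrt{n}(\hat{\theta}_{n}-\theta_{n,0}),
\]
the second summand is $o_{p}(1)$ since $\hat{\Psi}_{n}^{-1/2}-\Psi^{-1/2}\overset{p}{\rightarrow}0$ by continuous mapping applied to $\hat{\Psi}_{n}\overset{p}{\rightarrow}\Psi$ (using that $\Psi$ has full rank $a.s.$, as $G$ has full column rank $a.s.$ and $V$ is positive definite $a.s.$), while $\sqrt{n}(\hat{\theta}_{n}-\theta_{n,0})=O_{p}(1)$. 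For the leading term I would invoke Theorem \ref{TH3}(ii) with $B=\Psi^{-1/2}$, which is $\mathcal{C}$-measurable with finite elements and full rank $a.s.$, giving $\Psi^{-1/2}\sqrt{n}(\hat{\theta}_{n}-\theta_{n,0})\overset{d}{\rightarrow}(\Psi^{-1/2}\Psi\Psi^{-1/2})^{1/2}\xi_{\ast}=\xi_{\ast}$.

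For part (ii), under $H_{0}$ we have $\sqrt{n}(R\hat{\theta}_{n}-r)=R\sqrt{n}(\hat{\theta}_{n}-\theta_{n,0})$. Applying Theorem \ref{TH3}(ii) with $B=R$ (deterministic and hence trivially $\mathcal{C}$-measurable, with full row rank $q$) delivers $\sqrt{n}(R\hat{\theta}_{n}-r)\overset{d}{\rightarrow}(R\Psi R^{\prime})^{1/2}\xi_{\ast\ast}$ with $\xi_{\ast\ast}\sim N(0,I_{q})$ independent of $\mathcal{C}$. Repeating the decomposition from part (i) with $(R\hat{\Psi}_{n}R^{\prime})^{-1/2}$ in place of $\hat{\Psi}_{n}^{-1/2}$, and using $(R\hat{\Psi}_{n}R^{\prime})^{-1/2}\overset{p}{\rightarrow}(R\Psi R^{\prime})^{-1/2}$ by continuous mapping, I obtain
\[
(R\hat{\Psi}_{n}R^{\prime})^{-1/2}\sqrt{n}(R\hat{\theta}_{n}-r)\overset{d}{\rightarrow}\xi_{\ast\ast}\sim N(0,I_{q}).
\]
A final application of the continuous mapping theorem to $\|\cdot\|^{2}$ gives $T_{n}\overset{d}{\rightarrow}\|\xi_{\ast\ast}\|^{2}\sim\chi_{q}^{2}$, so $P(T_{n}>\chi_{q,1-\alpha}^{2})\rightarrow\alpha$.

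The main obstacle is precisely the random nature of the limiting covariance $\Psi$ and its sample counterpart $\hat{\Psi}_{n}$. Careful bookkeeping is needed to invoke Theorem \ref{TH3}(ii) with a $\mathcal{C}$-measurable (and hence, loosely speaking, \emph{stably-constant}) plug-in $B$, and to control the substitution of $\hat{\Psi}_{n}$ for $\Psi$ inside the studentization. The structural fact that rescues the Wald statistic from its random norming is that, conditional on $\mathcal{C}$, the limit is genuinely standard normal, so the resulting quadratic form is pivotal and free of $\Psi$, yielding the unconditional $\chi_{q}^{2}$ limit.
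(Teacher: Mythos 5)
Your proposal is correct and follows essentially the same route as the paper: establish $\hat{\Psi}_{n}\overset{p}{\rightarrow}\Psi$, replace the estimated studentization matrix by its $\mathcal{C}$-measurable limit up to an $o_{p}(1)$ remainder, invoke Theorem \ref{TH3}(ii) with a $\mathcal{C}$-measurable $B$ satisfying $B\Psi B^{\prime}=I$ (namely $\Psi^{-1/2}$, resp.\ $(R\Psi R^{\prime})^{-1/2}R$), and finish with the continuous mapping theorem. The only cosmetic difference is the order: the paper proves the general $R$ case first and obtains the first display as the special case $R=I$, $r=\theta_{0}$, whereas you do the identity case first.
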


As remarked above, an initial consistent GMM estimator $\bar{\theta}_{n}$ can
be obtained by choosing $\tilde{\Xi}_{n}=I$ and $\bar{\gamma}=1$, or
equivalently by using the identity matrices as estimators for $\Sigma_{\sigma
}$ and $\Sigma_{\varrho}$.

\section{Conclusion\label{Conclusion}}

The paper considers a class of GMM estimators for panel data models that
include possibly endogenous and dynamically evolving network or peer effect
terms. Identification of these models may require both linear and quadratic
moment conditions. We show that only a judicious choice of quadratic moments
combined with efficient forward differencing of the data leads to tractable
limiting approximations of the sampling distribution. Due to the presence of
common factors the limiting distribution of the GMM estimator is nonstandard,
a multivariate mixture normal. This leads to the need for random norming.
Despite of this it is shown that corresponding Wald test statistics have the
usual $\chi^{2}$-distribution.

The estimation theory developed here is expected to be useful for analyzing a
wide range of data in micro economics, including social interactions, as well
as in macro economics. Our theory is general in nature. Future work will
examine specific models and estimators in more detail. The exact specification
of instruments and the estimation of nuisance parameters are best handled on a
case by case basis.

\newpage

\begin{table}[!]
\centering
\begin{tabular}
[c]{ccccccccccc}%
\multicolumn{11}{c}{Monte Carlo Results}\\\hline\hline
&  &  & \multicolumn{2}{c}{OLS} &  & \multicolumn{2}{c}{IV} &  &
\multicolumn{2}{c}{GMM}\\\cline{4-5}\cline{7-8}\cline{10-11}%
$\lambda$ & $\delta$ &  & Bias & MAE &  & Bias & MAE &  & Bias & MAE\\
&  &  & (1) & (2) &  & (3) & (4) &  & (5) & (6)\\\hline
&  &  &  &  &  &  &  &  &  & \\
\multicolumn{11}{c}{\emph{Sample Size $n=100$}}\\
0.1 & 0.5 &  & 0.058 & 0.246 &  & 0.124 & 2.921 &  & 0.001 & 0.142\\
0.1 & 0.3 &  & 0.056 & 0.252 &  & 0.258 & 4.187 &  & 0.002 & 0.142\\
0.1 & 0.1 &  & 0.065 & 0.255 &  & 0.434 & 5.870 &  & 0.002 & 0.142\\
0.5 & 0.5 &  & 0.276 & 0.282 &  & 0.127 & 4.232 &  & -0.005 & 0.120\\
0.5 & 0.3 &  & 0.290 & 0.294 &  & 0.235 & 4.006 &  & -0.004 & 0.118\\
0.5 & 0.1 &  & 0.299 & 0.301 &  & 0.372 & 3.457 &  & -0.004 & 0.116\\
0.7 & 0.5 &  & 0.258 & 0.257 &  & 0.094 & 0.960 &  & -0.004 & 0.122\\
0.7 & 0.3 &  & 0.276 & 0.272 &  & 0.172 & 1.688 &  & -0.008 & 0.113\\
0.7 & 0.1 &  & 0.285 & 0.280 &  & 0.262 & 10.062 &  & -0.007 & 0.111\\
&  &  &  &  &  &  &  &  &  & \\
\multicolumn{11}{c}{\emph{Sample Size $n=1,000$}}\\
0.1 & 0.5 &  & 0.078 & 0.101 &  & 0.002 & 0.292 &  & 0.000 & 0.045\\
0.1 & 0.3 &  & 0.080 & 0.104 &  & 0.019 & 0.855 &  & 0.000 & 0.045\\
0.1 & 0.1 &  & 0.082 & 0.106 &  & 0.324 & 3.483 &  & 0.001 & 0.045\\
0.5 & 0.5 &  & 0.291 & 0.287 &  & 0.001 & 0.215 &  & -0.001 & 0.036\\
0.5 & 0.3 &  & 0.305 & 0.301 &  & 0.021 & 0.659 &  & -0.001 & 0.036\\
0.5 & 0.1 &  & 0.313 & 0.309 &  & 0.286 & 3.280 &  & -0.001 & 0.036\\
0.7 & 0.5 &  & 0.270 & 0.270 &  & 0.001 & 0.154 &  & -0.001 & 0.027\\
0.7 & 0.3 &  & 0.287 & 0.286 &  & 0.016 & 0.514 &  & -0.001 & 0.027\\
0.7 & 0.1 &  & 0.297 & 0.295 &  & 0.202 & 1.090 &  & -0.001 &
0.027\\\hline\hline
\end{tabular}
\captionsetup{margin=55pt,font={footnotesize,singlespacing},labelsep=period}\caption{Monte
Carlo results are based on 1,000 replications. Results are reported only for
estimates of the parameter $\lambda$. 'Bias' is the median bias, MAE is the
mean absolute error. OLS is the ordinary least squares estimator, IV is the
linear instrumental variables estimator, and GMM is the GMM estimator based on
both linear and quadratic moment conditions. }%
\label{Table_MCresults}%
\end{table}

\newpage

\phantom{Nothing here}\bigskip\appendix

\renewcommand{\thetheorem}{\Alph{section}.\arabic{theorem}}
\renewcommand{\theassume}{\Alph{section}.\arabic{assumec}} \renewcommand{\theequation}{\Alph{section}.\arabic{equation}}

\section{Appendix: Formal Assumptions \label{Formal Assumptions}}

\setcounter{assumec}{1} \setcounter{theorem}{0} \setcounter{lemmac}{0}
\setcounter{equation}{0} \setcounter{propos}{0}

In the following we state the set of assumptions which we employ, in addition
to Assumption \ref{GA1}, in establishing the consistency and limiting
distribution of our GMM estimator. We first postulate a set of assumptions
regarding the instruments $h_{it}$ and weights $a_{ijt}$. Let $\xi$ denote
some random variable, then $\left\Vert \xi\right\Vert _{s}\equiv\left(
E\left[  \left\vert \xi\right\vert ^{s}\right]  \right)  ^{1/s} $ denotes the
$s$-norm of $\xi$ for $s\geq1$.

\begin{assume}
\label{GA2} Let $\delta>0$, and let $K_{h}$, $K_{a}$ and $K_{f}$ denote finite
constants (which are taken, w.o.l.o.g., to be greater then one and do not vary
with any of the indices and $n$), then the following conditions hold for
$t=1,\ldots,T$ and $i=1,\ldots,n$:\newline(i) The elements of the $1\times
p_{t}$ vector of instruments ${h}_{it}=[h_{ir,t}]_{r=1,\ldots,p_{t}}$ are
measurable w.r.t. $\mathcal{B}_{n,t}\vee\mathcal{C}$. Furthermore, $\left\Vert
h_{irt}\right\Vert _{2+\delta}\leq K_{h}<\infty$ for some $\delta>0.$%
\newline(ii) The elements of the $1\times p_{t}$ vector of weights
$a_{ij,t}=\left[  a_{ijr,t}\right]  _{r=1,\ldots,p_{t}}$ are measurable w.r.t.
$\mathcal{B}_{n,t}\vee\mathcal{C}$. Furthermore, $a_{ii,t}=0$ and
$a_{ij,t}=a_{ji,t}$, and $\sum_{j=1}^{n}\left\vert a_{ijr,t}\right\vert \leq
K_{a}<\infty$, and $\sum_{j=1}^{n}\left\Vert a_{ijr,t}\right\Vert _{2+\delta
}\leq K_{a}<\infty$.\newline(iii) The factors $f_{t}$, with $f_{T}=1$ as a
normalization, are measurable w.r.t. $\mathcal{C}$ and satisfy $\left\vert
f_{t}\right\vert \leq K_{f}$.
\end{assume}

In the case where the $a_{ijr,t}$ are non-stochastic $\left\Vert
a_{ijr,t}\right\Vert _{2+\delta}=\left\vert a_{ijr,t}\right\vert $. The next
assumption summarizes the assumed convergence behavior of sample moments of
$h_{it}$ and $a_{ijt}$. The assumption allows for the observations to be cross
sectionally normalized by $\varrho_{i}$, where $\varrho_{i}$ may differ from
$\varrho_{0,i}$.

\begin{assume}
\label{GA3}Let the elements of $\Sigma_{\varrho}=\operatorname*{diag}%
_{i=1}^{n}(\varrho_{i}^{2})$ be measurable w.r.t. $\mathcal{Z}_{n}%
\vee\mathcal{C}$ with $0<c_{u}^{\varrho}<\varrho_{i}^{2}<C_{u}^{\varrho
}<\infty$. The following holds for $t=1,\ldots,T-1$:
\[
n^{-1}\sum_{i=1}^{n}E\left[  \left.  \left(  \frac{\varrho_{0,i}}{\varrho_{i}%
}\right)  ^{2}h_{it}^{\prime}h_{it}\right\vert \mathcal{C}\right]
\overset{p}{\rightarrow}V_{t,\varrho}^{h},\quad n^{-1}\sum_{i=1}^{n}\sum
_{j=1}^{n}E\left[  \left.  \left(  \frac{\varrho_{0,i}}{\varrho_{i}}\right)
^{2}\left(  \frac{\varrho_{0,j}}{\varrho_{j}}\right)  ^{2}{a}_{ij,t}^{\prime
}{a}_{ij,t}\right\vert \mathcal{C}\right]  \overset{p}{\rightarrow
}V_{t,\varrho}^{a},
\]
where the elements of $V_{t,\varrho}^{h}$ and $V_{t,\varrho}^{a}$ are finite
a.s. and measurable w.r.t. $\mathcal{C}$, and
\[
V_{t,n,\varrho}^{h}=n^{-1}\sum_{i=1}^{n}\left(  \frac{\varrho_{0,i}}%
{\varrho_{i}}\right)  ^{2}h_{it}^{\prime}h_{it}\overset{p}{\rightarrow
}V_{t,\varrho}^{h},\quad V_{t,n,\varrho}^{a}=n^{-1}\sum_{i=1}^{n}\sum
_{j=1}^{n}\left(  \frac{\varrho_{0,i}}{\varrho_{i}}\right)  ^{2}\left(
\frac{\varrho_{0,j}}{\varrho_{j}}\right)  ^{2}{a}_{ij,t}^{\prime}{a}%
_{ij,t}\overset{p}{\rightarrow}V_{t,\varrho}^{a}.
\]
\newline The matrix $V_{\varrho}=$ $\operatorname*{diag}_{t=1}^{T-1}\left(
V_{t,\varrho}\right)  $ with $V_{t,\varrho}=V_{t,\varrho}^{h}+2V_{t,\varrho
}^{a}$ is a.s. positive definite.
\end{assume}

For the case where $\varrho_{i}=\varrho_{0,i}$ we use the simplified notation
$V_{t}^{h}$, $V_{t}^{q}$, $V_{t}$ and $V$ for the matrices defined in the
above assumption. The spatial weights matrices, the spatial lag matrices
$R_{t}(\lambda)$ and $\underline{R}_{t}(\rho)$, and the parameters are assumed
to satisfy the following assumption.

\begin{assume}
\label{M-mat}(i) The elements of the spatial weights matrices $M_{p,t}$ and
$\underline{M}_{q,t}$ are observed. (ii) All diagonal elements of $M_{p,t}$
and $\underline{M}_{q,t}$ are zero. (iii) $\lambda_{n,0}\in\Theta_{\lambda}$,
$\rho_{n,0}\in\Theta_{\rho}$, $\beta_{n,0}\in\Theta_{\beta}$, $f_{n,0}%
\in\Theta_{f}$ and $\gamma_{n,0}\in\Theta_{\gamma}$\ where $\Theta_{\lambda
}\subseteq\mathbb{R}^{P}$, $\Theta_{\rho}\subseteq\mathbb{R}^{Q}$,
$\Theta_{\beta}\subseteq\mathbb{R}^{k}$, $\Theta_{f}\subseteq\mathbb{R}^{T-1}$
and $\Theta_{\gamma}\subseteq\mathbb{R}^{p_{\gamma}}$ are open and bounded.
Furthermore, $\lambda_{n,0}\rightarrow\lambda_{\ast}$, $\rho_{n,0}%
\rightarrow\rho_{\ast}$, $\beta_{n,0}\rightarrow\beta_{\ast}$, $f_{n,0}%
\rightarrow f_{\ast}$, $\gamma_{n,0}\rightarrow\gamma_{\ast}$ as
$n\rightarrow\infty$ with $\lambda_{\ast}\in\Theta_{\lambda}$, $\rho_{\ast}%
\in\Theta_{\rho}$, $\beta_{\ast}\in\Theta_{\beta}$, $f_{\ast}\in\Theta_{f}$,
$\gamma_{\ast}\in\Theta_{\gamma}$ and where $f_{\ast} $ and $\gamma_{\ast}$
are $\mathcal{C}$-measurable. (iii) For some compact sets $\underline{\Theta
}_{\lambda}$, $\underline{\Theta}_{\beta}$, $\underline{\Theta}_{\rho}$ and
$\underline{\Theta}_{f}=[-K,K]$ we have $\Theta_{\lambda}\subseteq
\underline{\Theta}_{\lambda}$, $\Theta_{\beta}\subseteq\underline{\Theta
}_{\beta}$, $\Theta_{\rho}\subseteq\underline{\Theta}_{\rho}$ and $\Theta
_{f}\subseteq\underline{\Theta}_{f}$. (iv) The matrices $R_{t}(\lambda)$ and
$\underline{R}_{t}(\rho)$ are defined for $\lambda\in\underline{\Theta
}_{\lambda}$, $\rho\in\underline{\Theta}_{\rho}$ and nonsingular for
$\lambda\in\Theta_{\lambda}$, $\rho\in\Theta_{\rho}$.
\end{assume}

The GMM estimator is optimized over the set $\underline{\Theta}_{\theta
}=\underline{\Theta}_{\lambda}\times\underline{\Theta}_{\beta}\times
\underline{\Theta}_{\rho}\times\underline{\Theta}_{f}$. We observe, as will be
discussed in more detail below, that under the above assumptions the sample
moment vector $\overline{m}_{n}(\theta,\gamma)$ given in (\ref{GM3}), and thus
the objective function of the GMM estimator, are well defined for all
$\theta\in\underline{\Theta}_{\theta}$.

The next assumption postulates a basic smoothness condition for the cross
sectional variance components and states basic assumptions regarding the
convergence behavior of the sample moments. (The first part of the assumption
also ensures that the measurability conditions and boundedness conditions of
Assumption \ref{GA3} are maintained over the entire parameter space.)

\begin{assume}
\label{Ass_Nor} (i) The cross sectional variance components $\varrho_{i}%
^{2}(\gamma_{\varrho})$ are differentiable and satisfy the measurability
conditions and boundedness conditions of Assumption \ref{GA3} for
$\gamma_{\varrho}\in\Theta_{\gamma_{\varrho}}$.\newline(ii) For $t\leq\tau\leq
s$ let $C_{s}$ be a $n\times n$ matrix of the form $\Upsilon$, $\Upsilon
\underline{M}_{p,s}$, $\Upsilon A_{t}^{r}\Upsilon$, $\Upsilon A_{t}%
^{r}\Upsilon\underline{M}_{p,s}$, or $\underline{M}_{q,\tau}^{\prime}\Upsilon
A_{t}^{r}\Upsilon\underline{M}_{p,s}$, where $\Upsilon$ is an $n\times n$
positive diagonal matrix with elements which are uniformly bounded and
measurable w.r.t. $\mathcal{Z}_{n}\vee\mathcal{C}$. Then the probability
limits ($n\rightarrow\infty$) of%
\begin{equation}%
\begin{array}
[c]{lll}%
n^{-1}h_{r,t}^{\prime}C_{s}y_{s}, & n^{-1}h_{r,t}^{\prime}C_{s}W_{s}, &
n^{-1}y_{\tau}^{\prime}C_{s}W_{s},\\
n^{-1}W_{\tau}^{\prime}C_{s}y_{s}, & n^{-1}y_{\tau}^{\prime}C_{s}y_{s}, &
n^{-1}W_{\tau}^{\prime}C_{s}W_{s},
\end{array}
\label{Ass_Nor1}%
\end{equation}
exist for $r=1,\ldots,p_{t}$, and the probability limits are measurable w.r.t.
$\mathcal{C}$, and bounded in absolute value.
\end{assume}

We note that typically those probability limits will coincide with the
probability limits of the corresponding expectations w.r.t. to $\mathcal{C}$,
e.g.,
\[
\operatorname*{plim}_{n\rightarrow\infty}n^{-1}h_{r,t}^{\prime}C_{s}%
y_{s}=\operatorname*{plim}_{n\rightarrow\infty}E\left[  n^{-1}h_{r,t}^{\prime
}C_{s}y_{s}|\mathcal{C}\right]  .
\]

The following assumption guarantees that the moment conditions identify the
parameter $\theta_{0}$. To cover initial estimators for $\theta_{0}$ our setup
allows both for situations where the estimator for $\theta_{0}$ is based on a
consistent or an inconsistent estimator of the auxiliary parameters
$\gamma_{0}$. In the following let $\bar{\gamma}_{n}\overset{p}{\rightarrow
}\bar{\gamma}_{\ast}$ with $\bar{\gamma}_{n}\in\Theta_{\gamma}$ and
$\bar{\gamma}_{\ast}\in\Theta_{\gamma}$ denote a particular estimator and its
limit. For consistent estimators of the auxiliary parameters $\bar{\gamma
}_{\ast}=\gamma_{\ast}$, and for inconsistent estimators $\bar{\gamma}_{\ast
}\neq\gamma_{\ast}$. The latter covers the case where in the computation of
the first stage estimator for $\theta_{0}$ all auxiliary parameters are set
equal to some fixed values, i.e., the case where $\bar{\gamma}_{n}%
=\gamma_{\ast}=\bar{\gamma}$.

\begin{assume}
\label{Unique}\textbf{\ }Let $\delta_{\ast},\rho_{\ast},f_{\ast},\gamma_{\ast
}$ be as defined in Assumption \ref{M-mat}, let $\theta_{\ast}=(\delta_{\ast
}^{\prime},\rho_{\ast}^{\prime},f_{\ast}^{\prime})^{\prime}$, and let
$\bar{\gamma}_{n}\overset{p}{\rightarrow}\bar{\gamma}_{\ast}$ with
$\bar{\gamma}_{n}\in\Theta_{\gamma}$ and $\bar{\gamma}_{\ast}\in\Theta
_{\gamma}$, where $\bar{\gamma}_{\ast}$ is $\mathcal{C}$-measurable.
Furthermore, for $\theta\in\underline{\Theta}_{\theta}$ let $\mathfrak{m}%
(\theta)=\operatorname*{plim}_{n\rightarrow\infty}n^{-1/2}\overline{m}%
_{n}(\theta,\bar{\gamma}_{\ast})$ and $G(\theta)=\operatorname*{plim}%
_{n\rightarrow\infty}\partial n^{-1/2}\overline{m}_{n}(\theta,\gamma_{\ast
})/\partial\theta$.\footnote{Lemma \ref{Uniform} establishes the existence of
the limit of the moment vector $\mathfrak{m}(\theta)$ and the limit of the
derivatives of the moment vector $G(\theta)$. To keep our notation simple, we
have suppressed the dependence of $\mathfrak{m}(\theta)$ on $\bar{\gamma
}_{\ast}$. The limiting matrix $G(\theta)$ is only considered at $\bar{\gamma
}_{\ast}=\gamma_{\ast}$.} Then the following is assumed to hold:\newline(i)
$\theta_{\ast}$ is identifiable unique in the sense that $\mathfrak{m}%
(\theta_{\ast})=0$ a.s. and for every $\varepsilon>0$,%
\begin{equation}
\inf_{\left\{  \theta\in\underline{\Theta}_{\theta}:\left\vert \theta
-\theta_{\ast}\right\vert >\varepsilon\right\}  }\left\Vert \mathfrak{m}%
(\theta)\right\Vert >0\text{ a.s.}\label{GM5}%
\end{equation}
\newline(ii) $\sup_{\theta\in\underline{\Theta}_{\theta}}\left\Vert
n^{-1/2}\overline{m}_{n}\left(  \theta,\bar{\gamma}_{n}\right)  -\mathfrak{m}%
(\theta)\right\Vert =o_{p}\left(  1\right)  $ for $\bar{\gamma}_{n}%
\overset{p}{\rightarrow}\bar{\gamma}_{\ast}.$\newline(iii) $\sup_{\theta
\in\underline{\Theta}_{\theta}}\left\Vert \partial n^{-1/2}\overline{m}%
_{n}\left(  \theta,\bar{\gamma}_{n}\right)  /\partial\theta-G(\theta
)\right\Vert =o_{p}\left(  1\right)  $ for $\bar{\gamma}_{n}%
\overset{p}{\rightarrow}\gamma_{\ast}$, and
\[
\operatorname*{plim}_{n\rightarrow\infty}\partial n^{-1/2}\overline{m}%
_{n}(\bar{\theta}_{n},\bar{\gamma}_{n})/\partial\gamma=0
\]
for $\bar{\theta}_{n}\overset{p}{\rightarrow}\theta_{\ast}$ and $\bar{\gamma
}_{n}\overset{p}{\rightarrow}\gamma_{\ast}$.
\end{assume}

\bigskip

We furthermore maintain the following assumptions regarding the moment
weighting matrix of our GMM estimator.

\begin{assume}
\label{Ass_Con} Suppose $\tilde{\Xi}_{n}\overset{p}{\rightarrow}\Xi$ , where
$\Xi$ is $\mathcal{C}$-measurable with $a.s.$ finite elements, and $\Xi$ is
positive definite $a.s.$
\end{assume}

Our specification allows for the true autoregressive parameters to be
arbitrarily close to a singular point of $R_{t}(\lambda)$ and $\underline{R}%
_{t}(\rho)$.\footnote{This is in contrast to some of the recent panel data
literature; see, e.g., Lee and Yu (2014).} Technically we distinguish between
the parameter space and the optimization space, which defines the estimator.
Since our specification of the moment vector does not rely on $R_{t}%
(\lambda)^{-1}$ or $\underline{R}_{t}(\rho)^{-1}$ it remains well defined even
for parameter values where $R_{t}(\lambda)$ and $\underline{R}_{t}(\rho)$ are
singular. Thus for autoregressive processes we can specify the optimization
space to be a compact set $\underline{\Theta}_{\theta}=\underline{\Theta
}_{\lambda}\times\underline{\Theta}_{\beta}\times\underline{\Theta}_{\rho
}\times\underline{\Theta}_{f}$ containing the parameter space, without
restricting the class of admissible models. We note that given that $f_{T}=1$
the weights $\pi_{ts}=\pi_{ts}(f,\gamma_{\sigma})$ of the Generalized Helmert
transformation defined in Proposition \ref{QHELMERT} are well defined on
$\underline{\Theta}_{f}\times\underline{\Theta}_{\gamma}$.

\renewcommand{\thetheorem}{\Alph{section}.\arabic{theorem}}
\renewcommand{\theassume}{\Alph{section}.\arabic{assumec}} \renewcommand{\theequation}{\Alph{section}.\arabic{equation}}

\section{Appendix: Forward Differencing and Orthogonality of Linear Quadratic
Forms \label{TRVCLQ}}

\setcounter{assumec}{1} \setcounter{theorem}{0} \setcounter{lemmac}{0}
\setcounter{equation}{0} \setcounter{propos}{0}

Let $u_{t}^{+}=\Pi u_{t}$ denote the vector of forward differenced
disturbances with $\Pi f=0$ and $\Pi\Sigma_{\sigma}\Pi^{\prime}=I$. In the
text we referred to this transformation as the generalized Helmert
transformation. To emphasize that the elements of $\Pi$ are functions of the
$f_{t}$'s and $\sigma_{t}$'s we sometimes write $\pi_{ts}\left(
f,\gamma_{\sigma}\right)  $.

\begin{proposition}
\label{QHELMERT}\footnote{Further details and an explicit proof are given in
the Supplementary Appendix. While the claims of the proposition are now easy
to verify, the original derivation of explicit expressions for the elements of
$\Pi$ posed a substantial challenge.} (Generalized Helmert Transformation) Let
$F=(f_{ts})$ be a $T-1\times T$ quasi differencing matrix with diagonal
elements $f_{tt}=1$, $f_{t,t+1}=-f_{t}/f_{t+1}$, and all other elements zero.
Let $U$ be an upper triagonal $T-1\times T-1$ matrix such that $F\Sigma
_{\sigma}F^{\prime}=UU^{\prime}$. Then, the $T-1\times T$ matrix $\Pi=U^{-1}F$
is upper triagonal and satisfies $\Pi f=0$ and $\Pi\Sigma_{\sigma}\Pi^{\prime
}=I.$ Explicit formulas for the elements of $\Pi=\Pi(f,\gamma_{\sigma})$ are
given as
\[%
\begin{array}
[c]{l}%
\pi_{tt}\left(  f,\gamma_{\sigma}\right)  =\left(  \sqrt{\phi_{t+1}/\phi_{t}%
}\right)  /\sigma_{t},\\
\pi_{ts}\left(  f,\gamma_{\sigma}\right)  =-f_{t}f_{s}\left(  \sqrt{\phi
_{t+1}/\phi_{t}}\right)  /\left(  \phi_{t+1}\sigma_{t}\sigma_{s}^{2}\right)
\text{ for }s>t,\\
\pi_{ts}=0\text{ for }s<t.
\end{array}
\]
with $\phi_{t}=\sum_{\tau=t}^{T}(f_{\tau}/\sigma_{\tau})^{2}$ For
computational purposes observe that $\phi_{t}=(f_{t}/\sigma_{t})^{2}%
+\phi_{t+1}$. Also note that if $\sigma_{T}^{2}=1$ as a normalizations, then
$f_{T}/\sigma_{T}=1$.
\end{proposition}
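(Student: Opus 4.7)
The proposition has two logically separable parts: the abstract construction ($\Pi$ upper triangular with $\Pi f = 0$ and $\Pi \Sigma_\sigma \Pi' = I$), and the explicit formula for $\pi_{ts}(f,\gamma_\sigma)$. I would treat them in that order.

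For the abstract construction, I would first check that $F$ annihilates $f$ directly from its bidiagonal form: the $t$-th row contributes $f_t - (f_t/f_{t+1})f_{t+1}=0$. Since $\Sigma_\sigma$ is diagonal with $\sigma_t^2\ge c_u>0$ by Assumption \ref{GA1} and $F$ has full row rank $T-1$, the $(T-1)\times(T-1)$ matrix $F\Sigma_\sigma F'$ is symmetric positive definite, so its upper-triangular Cholesky factor $U$ with positive diagonal exists and is nonsingular. Setting $\Pi = U^{-1}F$ then gives immediately $\Pi f = U^{-1}(Ff)=0$ and
\[
\Pi\Sigma_\sigma\Pi' \;=\; U^{-1}F\Sigma_\sigma F' U'^{-1} \;=\; U^{-1}(UU')U'^{-1} \;=\; I.
\]
Upper-triangularity of $\Pi$ in the $(T-1)\times T$ sense (i.e.\ $\pi_{ts}=0$ for $s<t$) follows because $U^{-1}$ is upper triangular and $F_{ks}=0$ for $s<k$, so $(U^{-1}F)_{ts}=\sum_{k\ge t}(U^{-1})_{tk}F_{ks}$ vanishes whenever $s<t$.

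For the explicit formulas I would verify them directly rather than rederive the Cholesky factorization from scratch (the footnote already signals that verification is easy even though the original derivation was not). Define $\pi_{ts}$ by the stated expressions and confirm (a) $\pi_{ts}=0$ for $s<t$ (immediate), (b) $\Pi f = 0$, and (c) $\Pi\Sigma_\sigma\Pi'=I$. Both (b) and (c) reduce to telescoping identities driven by the recursion $\phi_t=\phi_{t+1}+(f_t/\sigma_t)^2$. For (b), after factoring out $(f_t/\sigma_t)\sqrt{\phi_{t+1}/\phi_t}$ one is left with $1-\phi_{t+1}^{-1}\sum_{s=t+1}^{T}(f_s/\sigma_s)^2=0$. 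For the diagonal in (c), $(\Pi\Sigma_\sigma\Pi')_{tt} = \phi_{t+1}/\phi_t+(f_t/\sigma_t)^2/\phi_t=\phi_t/\phi_t=1$. For off-diagonals with $t<r$, only $s\ge r$ contributes; the $s=r$ term and the $\sum_{s>r}$ term share a common factor $\sqrt{\phi_{t+1}/\phi_t}\sqrt{\phi_{r+1}/\phi_r}/(\phi_{t+1}\sigma_t\sigma_r)$, and the identity $\sum_{s>r}(f_s/\sigma_s)^2=\phi_{r+1}$ yields exact cancellation. Uniqueness of the upper-triangular Cholesky factor with positive diagonal then guarantees that the explicit $\Pi$ just constructed coincides with $U^{-1}F$.

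The main obstacle is purely computational bookkeeping in the off-diagonal check of (c): the cancellation is invisible until the common prefactor is extracted and the sum $\sum_{s>r}(f_s/\sigma_s)^2$ is collapsed to $\phi_{r+1}$. There is no real conceptual difficulty once the verification-rather-than-derivation strategy is adopted, consistent with the authors' footnote. A minor technical point worth checking is that the proposition implicitly requires $f_t\neq 0$ for all $t$ (so that $F$ is defined and $\phi_t>0$); under the normalization $f_T=1$ the quantities $\phi_t$ are strictly positive whenever at least $f_T$ is retained, so the square roots $\sqrt{\phi_{t+1}/\phi_t}$ are well defined throughout.
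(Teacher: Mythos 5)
Your proposal is correct and follows essentially the same route as the paper, whose proof simply states that the claims can be verified by straightforward calculation from the explicit expressions (with details relegated to the supplementary appendix); your telescoping checks of $\Pi f=0$ and $\Pi\Sigma_{\sigma}\Pi^{\prime}=I$ via $\phi_{t}=\phi_{t+1}+(f_{t}/\sigma_{t})^{2}$ are exactly that verification, and they go through. The additional Cholesky existence/uniqueness argument you supply to tie the explicit formulas back to $U^{-1}F$ is a sensible completion of what the paper leaves implicit.
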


Proposition \ref{QHELMERT} is an important result because it gives explicit
expressions for the elements of $\Pi$. Such expression are crucial from a
computational point of view, especially if $f_{t}$ is estimated as an
unobserved parameter of the model. Although we do not adopt this in the
following, for computational purposes it may furthermore be convenient to
re-parameterize the model in terms $\underline{f}_{t}=f_{t}/\sigma_{t}$ and
$\sigma_{t}$ in place of $f_{t}$ and $\sigma_{t}$. We note that for $f_{t}=1$
and $\sigma_{t}=1$ we obtain as a special case the Helmert transformation with
$\pi_{tt}=\sqrt{(T-t)/(T-t+1)}$ and $\pi_{ts}=-\sqrt{(T-t)/(T-t+1)}/(T-t)$ for
$s>t$.

We also note that because $Ff=0$ any transformation of the form $\Pi
(f,\bar{\gamma}_{\sigma})=\bar{U}^{-1}F$ with $F\bar{\Sigma}_{\sigma}%
F^{\prime}=\bar{U}\bar{U}^{\prime}$ and $\bar{\Sigma}_{\sigma}=$
$\operatorname*{diag}(\bar{\gamma}_{\sigma})$ some positive diagonal matrix
removes the interactive effect. An important special case is the
transformation with weights $\pi_{ts}\left(  f,1_{T}\right)  $ corresponding
to $\bar{\Sigma}_{\sigma}=I_{T}$.

In (\ref{MOD2}) the disturbance process was specified to depend only on a
single factor for simplicity. Now suppose that the disturbance process is
generalized to $\underline{R}_{t}(\rho)\varepsilon_{t}=\mu^{1}f_{t}^{1}%
+\ldots+\mu^{P}f_{t}^{P}+u_{t}$ where $f_{t}^{p}$ denotes the $p$-th factor
and $\mu^{p}$ the corresponding vector of factor loadings. We note that
multiple factors can be handled by recursively applying the above generalized
Helmert transformation, yielding a $T-P\times T$ transformation matrix
$\Pi=\Pi_{P}\ldots\Pi_{2}\Pi_{1}$ where the matrices $\Pi_{p}$ are of
dimension $(T-p)\times(T-p+1)$, and $\Pi_{1}\Sigma_{\sigma}\Pi_{1}^{\prime
}=I_{T-1}$, $\Pi_{p}\Pi_{p}^{\prime}=I_{T-p}$ for $p>1$, and $\Pi_{p}%
(\Pi_{p-1}...\Pi_{1}f^{p})=0$ with $f^{p}=[f_{1}^{p},\ldots,f_{T}^{p}%
]^{\prime}$. Of course, this in turn implies that $\Pi\Sigma_{\sigma}%
\Pi^{\prime}=I_{T-P}$ and $\Pi\lbrack f^{1},\ldots,f^{P}]=0 $. The elements of
each of the $\Pi_{p}$ matrices have the same structure as those given in
Proposition \ref{QHELMERT}. A more detailed discussion, including a discussion
of a convenient normalization for the factors, is given in the supplementary appendix.

We next give a general result on the variance covariances of linear quadratic
forms based on forward differenced, but not necessarily orthogonally forward
differenced, disturbances. The optimal weight matrix of a GMM\ estimator based
on both linear and quadratic moment conditions depends on these covariances.
Simplifying them as much as possible is critical to the implementation of the
estimator. Our result establishes the conditions under which such
simplifications can be achieved. We also give sufficient conditions for the
validity of linear and quadratic moment conditions.

\begin{proposition}
\label{VCLQ}\footnote{Further details and an explicit proof are given in the
Supplementary Appendix.} Let the information sets $\mathcal{B}_{n,i,t}$,
$\mathcal{B}_{n,t} $, $\mathcal{Z}_{n}$ be as defined in Section \ref{Model}.
Furthermore assume that for all $t=1,\ldots,T$, $i=1,\ldots,n$, $n\geq1$,
$E\left[  u_{it}|\mathcal{B}_{n,i,t}\vee\mathcal{C}\right]  =0$, $E\left[
u_{it}^{2}|\mathcal{B}_{n,i,t}\vee\mathcal{C}\right]  $ $=\varrho_{i}%
^{2}\sigma_{t}^{2}>0$, $E\left[  u_{it}^{3}|\mathcal{B}_{n,i,t}\vee
\mathcal{C}\right]  =\mu_{3,it}$, $E\left[  u_{it}^{4}|\mathcal{B}_{n,i,t}%
\vee\mathcal{C}\right]  =\mu_{4,it}$, where $\sigma_{t}$ is finite and
measurable w.r.t. $\mathcal{C}$, and $\varrho_{i}$, $\mu_{3,it}$ and
$\mu_{4,it}$ are finite and measurable w.r.t. $\mathcal{Z}_{n}\vee\mathcal{C}%
$. Define $\Sigma_{\varrho}=\operatorname*{diag}\left(  \varrho_{1}%
^{2},...,\varrho_{n}^{2}\right)  $ and $\Sigma_{\sigma}=\operatorname*{diag}%
\left(  \sigma_{1}^{2},...,\sigma_{T}^{2}\right)  $. Let $A_{t}=(a_{ijt})$ and
$B_{t}=(b_{ijt})$ be $n\times n$ matrices, and let $a_{t}=(a_{it})$ and
$b_{t}=(b_{it})$ be $n\times1$ vectors, where $a_{ijt}$, $b_{ijt}$, $a_{it}$,
$b_{it}$ are measurable w.r.t. $\mathcal{B}_{n,t}\vee\mathcal{C}$. Let
$\pi_{t}=[0,\ldots,0,\pi_{tt},\ldots,\pi_{tT}]$ and $\gamma_{t}=[0,\ldots
,0,\gamma_{tt},\ldots,\gamma_{tT}]$ be $1\times T$ vectors where $\pi_{t\tau}$
and $\gamma_{t\tau}$ are measurable w.r.t. $\mathcal{C}$, and consider the
forward differences $u_{t}^{+}=\left[  u_{1t}^{+},\ldots,u_{nt}^{+}\right]
^{\prime}$ and $u_{t}^{\times}=\left[  u_{1t}^{\times},\ldots,u_{nt}^{\times
}\right]  ^{\prime}$ with%
\[%
\begin{array}
[c]{ccccc}%
u_{it}^{+}=\sum_{s=t}^{T}\pi_{ts}u_{is}=\pi_{t}u_{i}^{\prime}, &  & \text{and
} &  & u_{it}^{\times}=\sum_{s=t}^{T}\gamma_{ts}u_{is}=\gamma_{t}u_{i}%
^{\prime}.
\end{array}
\]
Then
\begin{align}
&  E\left[  u_{t}^{+\prime}A_{t}u_{t}^{\times}+u_{t}^{+\prime}a_{t}%
|\mathcal{C}\right]  =\pi_{t}\Sigma_{\sigma}\gamma_{t}\operatorname*{tr}%
\left[  E\left(  A_{t}\Sigma_{\varrho}|\mathcal{C}\right)  \right]
,\label{CV1}\\
&  \operatorname*{Cov}(u_{t}^{+\prime}A_{t}u_{t}^{\times}+a_{t}^{\prime}%
u_{t}^{+},u_{t}^{+\prime}B_{t}u_{t}^{\times}+b_{t}^{\prime}u_{t}%
^{+}|\mathcal{C)}\label{CV2}\\
&  \qquad=(\pi_{t}\Sigma_{\sigma}\pi_{t}^{\prime})(\gamma_{t}\Sigma_{\sigma
}\gamma_{t}^{\prime})E\left[  \operatorname*{tr}(A_{t}\Sigma_{\varrho}%
B_{t}^{\prime}\Sigma_{\varrho})|\mathcal{C}\right]  +(\pi_{t}\Sigma_{\sigma
}\gamma_{t}^{\prime})^{2}E\left[  \operatorname*{tr}(A_{t}\Sigma_{\varrho
}B_{t}\Sigma_{\varrho})|\mathcal{C}\right] \nonumber\\
&  \qquad+(\pi_{t}\Sigma_{\sigma}\pi_{t}^{\prime})E\left[  a_{t}^{\prime
}\Sigma_{\varrho}b_{t}|\mathcal{C}\right]  +\mathcal{K}_{1},\nonumber\\
&  \operatorname*{Cov}(u_{t}^{+\prime}A_{t}u_{t}^{\times}+a_{t}^{\prime}%
u_{t}^{+},u_{s}^{+\prime}B_{s}u_{s}^{\times}+b_{s}^{\prime}u_{s}%
^{+}|\mathcal{C)=K}_{2}\qquad\text{for all }t>s,\label{CV3}%
\end{align}
where $\mathcal{K}_{1}$ and $\mathcal{K}_{2}$ are random functionals that
depend on $a_{t}$, $b_{t}$, $A_{t}$ and $B_{t}$. Explicit expressions for
$\mathcal{K}_{1}$ and $\mathcal{K}_{2}$ are given in the supplementary
appendix. Sufficient conditions that ensure that $E\left[  u_{t}^{+\prime
}A_{t}u_{t}^{\times}+u_{t}^{+\prime}a_{t}|\mathcal{C}\right]  =0$ and that
$\mathcal{K}_{1}=\mathcal{K}_{2}=0$ are that $\operatorname*{vec}_{D}\left(
A_{t}\right)  =\operatorname*{vec}_{D}\left(  B_{t}\right)  =0,$ $\Pi=\Gamma$
with $\Pi f=0$ and $\Pi\Sigma_{\sigma}\Pi^{\prime}=I.$ Specialized expressions
for $\mathcal{K}_{1}$ and $\mathcal{K}_{2}$ when one or several of these
conditions fail are again given in the supplementary appendix.
\end{proposition}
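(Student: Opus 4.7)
My plan is to establish (\ref{CV1})–(\ref{CV3}) by careful bookkeeping of iterated conditional expectations, exploiting the martingale-like structure that the hypothesis $E[u_{it}|\mathcal{B}_{n,i,t}\vee\mathcal{C}]=0$ imposes on the innovations. The core auxiliary fact I would prove first is a \emph{pairing lemma}: for any $s,\tau\geq t$ and indices $i,j$,
\[
E[u_{is}u_{j\tau}|\mathcal{B}_{n,t}\vee\mathcal{C}]=\mathbf{1}\{i=j,\,s=\tau\}\sigma_{s}^{2}\varrho_{i}^{2}.
\]
This follows by conditioning on $\mathcal{B}_{n,j,\tau}\vee\mathcal{C}$ (if $s\neq\tau$, WLOG $s<\tau$, so $u_{is}$ sits in that $\sigma$-field) or on $\mathcal{B}_{n,i,t}\vee\mathcal{C}$ (if $s=\tau,i\neq j$, so $u_{jt}\in u_{-i,t}$) to kill the later-indexed innovation, and on the hypothesis for $i=j,s=\tau$. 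Analogous pairing rules for triple and quadruple conditional products follow by the same stripping argument: the only surviving contributions arise when indices partition into matching $(i,s)$ blocks; partial diagonals pick up $\mu_{3,it}$ and the fully diagonal block picks up $\mu_{4,it}$.

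For (\ref{CV1}), I would expand $u_{t}^{+\prime}A_{t}u_{t}^{\times}=\sum_{i,j,s,\tau}a_{ijt}\pi_{ts}\gamma_{t\tau}u_{is}u_{j\tau}$ and condition on $\mathcal{B}_{n,t}\vee\mathcal{C}$, which contains the $a_{ijt}$. The pairing lemma leaves only the $i=j$, $s=\tau$ terms, yielding $\sum_{i}a_{iit}\varrho_{i}^{2}\sum_{s}\pi_{ts}\sigma_{s}^{2}\gamma_{ts}=(\pi_{t}\Sigma_{\sigma}\gamma_{t}^{\prime})\operatorname{tr}(A_{t}\Sigma_{\varrho})$. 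A final conditioning on $\mathcal{C}$ gives (\ref{CV1}); the linear piece $E[u_{t}^{+\prime}a_{t}|\mathcal{C}]$ is zero because each $u_{is}$ with $s\geq t$ has conditional mean zero given $\mathcal{B}_{n,t}\vee\mathcal{C}$.

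For (\ref{CV2}), after subtracting means I would expand the product of the two linear-quadratic forms as a sum of terms bilinear, trilinear, or quadrilinear in the $u_{\cdot t}$ and condition on $\mathcal{B}_{n,t}\vee\mathcal{C}$. Classifying quadruple indices $(i,s;j,\tau;k,\sigma;l,v)$ by pairing pattern, the three two-block pairings each combine with $\pi_{t}\Sigma_{\sigma}\pi_{t}^{\prime}$, $\gamma_{t}\Sigma_{\sigma}\gamma_{t}^{\prime}$, and $\pi_{t}\Sigma_{\sigma}\gamma_{t}^{\prime}$ factors to deliver, respectively, $E[\operatorname{tr}(A_{t}\Sigma_{\varrho}B_{t}^{\prime}\Sigma_{\varrho})|\mathcal{C}]$, $E[\operatorname{tr}(A_{t}\Sigma_{\varrho}B_{t}\Sigma_{\varrho})|\mathcal{C}]$, and the linear-linear trace contribution, minus the means already subtracted. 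The fully-coincident case $i=j=k=l$ contributes a $\mu_{4,it}$-weighted diagonal term, and the three-index residuals generate $\mu_{3,it}$-weighted diagonal cross-terms between the linear and quadratic parts; together these form $\mathcal{K}_{1}$, all of whose summands carry a factor of $a_{iit}$ or $b_{iit}$. For (\ref{CV3}), the same expansion for $t>s$ leverages that $\max$ of any time index from the $t$-block strictly exceeds every index in the $s$-block, so iterated conditioning on the largest time strips each such innovation and the only possible non-vanishing pairings force the matched partners onto the diagonal of $A_{t}$ or $B_{s}$; the resulting expression is collected into $\mathcal{K}_{2}$ and again depends only on diagonal entries $a_{iit},b_{iis}$ and higher moments. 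The sufficient conditions then follow immediately: $\operatorname{vec}_{D}(A_{t})=\operatorname{vec}_{D}(B_{t})=0$ annihilates every summand in $\mathcal{K}_{1},\mathcal{K}_{2}$, while $\Pi=\Gamma$ with $\Pi\Sigma_{\sigma}\Pi^{\prime}=I$ forces $\pi_{t}\Sigma_{\sigma}\pi_{s}^{\prime}=\mathbf{1}\{t=s\}$, removing any residual cross-time Gaussian-style pairing.

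The principal obstacle is the disciplined enumeration in the quartic step: keeping track of the three Gaussian-style pairings, the fully-diagonal block producing $\mu_{4}$, and the intermediate diagonals producing $\mu_{3}$, while simultaneously carrying the random weights $a_{ijt},b_{ijt}$ and the random differencing coefficients $\pi_{ts},\gamma_{t\tau}$ through every conditioning step. These weights behave as constants under each $\mathcal{B}_{n,\cdot,\cdot}\vee\mathcal{C}$ on which one conditions, but their joint randomness with $\Sigma_{\varrho}$ and the factors must be preserved to the final outer expectation given $\mathcal{C}$, which is what ultimately produces the $E[\cdot|\mathcal{C}]$ traces on the right-hand side of (\ref{CV2}).
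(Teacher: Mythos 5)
Your approach is essentially the paper's: the paper itself only sketches the argument, noting that the methodology is the same as the conditional-variance computation in the proof of Proposition \ref{TH1} (the verification of (\ref{Con2})) and deferring explicit derivations to the supplementary appendix, and that computation is exactly your iterated-conditioning/pairing scheme on the nested $\sigma$-fields $\mathcal{B}_{n,i,t}\vee\mathcal{C}$, with higher moments entering only through coincident index--time blocks and $\Pi\Sigma_{\sigma}\Pi^{\prime}=I$ collapsing the cross-time sums.

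One inaccuracy worth correcting: your description of $\mathcal{K}_{2}$ as depending ``only on diagonal entries $a_{iit},b_{iis}$ and higher moments'' is not right. For $t>s$ the pairing $(i,v)\leftrightarrow(k,p)$, $(j,w)\leftrightarrow(l,q)$ with $i=k$, $j=l$ survives off the diagonal and contributes terms of the form $(\pi_{t}\Sigma_{\sigma}\pi_{s}^{\prime})(\gamma_{t}\Sigma_{\sigma}\gamma_{s}^{\prime})E\left[\operatorname{tr}(A_{t}\Sigma_{\varrho}B_{s}^{\prime}\Sigma_{\varrho})|\mathcal{C}\right]$; in addition, because $a_{ijt}$ and $a_{it}$ are only $\mathcal{B}_{n,t}\vee\mathcal{C}$-measurable and hence may depend on innovations dated $s,\ldots,t-1$, the stripping argument leaves residual terms of the form $E\left[a_{it}b_{ils}\varrho_{i}^{2}u_{lq}|\mathcal{C}\right]$ with $s\leq q<t$, which are neither diagonal nor higher-moment terms. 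Both classes carry a factor $\pi_{t}\Sigma_{\sigma}\pi_{s}^{\prime}$ or $\pi_{t}\Sigma_{\sigma}\gamma_{s}^{\prime}$ and so are annihilated by the orthogonality condition $\Pi=\Gamma$, $\Pi\Sigma_{\sigma}\Pi^{\prime}=I$ --- which you do invoke in your final step --- so your verification of the stated sufficient conditions still goes through; but zero diagonals alone do not annihilate every summand of $\mathcal{K}_{2}$, and the enumeration of $\mathcal{K}_{2}$ must include these off-diagonal and weight--innovation correlation terms.
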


\renewcommand{\thetheorem}{\Alph{section}.\arabic{theorem}}
\renewcommand{\theassume}{\Alph{section}.\arabic{assumec}} \renewcommand{\theequation}{\Alph{section}.\arabic{equation}}

\section{Appendix: Proofs\label{APPPR}}

\setcounter{assumec}{1} \setcounter{theorem}{0} \setcounter{lemmac}{0} \setcounter{equation}{0}

\subsection{Martingale Difference Representation\label{MDS_Representation}}

Consider the sample moment vector $\overline{m}_{n}=\overline{m}_{n}\left(
\theta_{0},\gamma_{0,\sigma},\gamma_{\varrho}\right)  $ defined by
(\ref{GM3}), evaluated at $\theta_{0},\gamma_{0,\sigma}$, but allowing for
$\gamma_{\varrho}\neq\gamma_{0,\varrho}$. As discussed in the text, the reason
for this is to accommodate both leading cases $\varrho_{i}^{2}=\varrho
_{0,i}^{2}$ and $\varrho_{i}^{2}=1$. Observe from (\ref{MOMLQ}) that the
subvectors of $\overline{m}_{n}$ are given by%
\begin{equation}%
\begin{array}
[c]{l}%
\overline{m}_{t}(\theta_{0},\gamma_{0,\sigma},\gamma_{\varrho})=n^{-1/2}%
\sum_{i=1}^{n}h_{it}^{\prime}u_{\ast it}^{+}+n^{-1/2}\sum_{i=1}^{n}\sum
_{j=1}^{n}{a}_{ij,t}^{\prime}u_{\ast it}^{+}u_{\ast jt}^{+},\\
u_{\ast it}^{+}=u_{\ast it}^{+}(\theta_{0},\gamma_{0,\sigma},\gamma_{\varrho
})=\sum_{s=t}^{T}\pi_{ts}\left(  f_{0},\gamma_{0,\sigma}\right)
u_{is}/\varrho_{i}.
\end{array}
\label{GM3a}%
\end{equation}
To establish a martingale difference representation of $\overline{m}%
_{n}=\overline{m}_{n}(\theta_{0},\gamma_{0,\sigma},\gamma_{\varrho})$ we
define the following sub-$\sigma$-fields of $\mathcal{F}$ ($i=1,\ldots,n$):%
\begin{equation}%
\begin{array}
[c]{l}%
\mathcal{F}_{n,i}=\sigma\left(  \left\{  {x}_{j1}^{o},{z}_{j},\mu_{j}\right\}
_{j=1}^{n},\left\{  u_{j1}\right\}  _{j=1}^{i-1}\right)  \vee\mathcal{C},\\
\mathcal{F}_{n,n+i}=\sigma\left(  \left\{  {x}_{j2}^{o},{z}_{j},{u}_{j1}%
^{o},\mu_{j}\right\}  _{j=1}^{n},\left\{  u_{j2}\right\}  _{j=1}^{i-1}\right)
\vee\mathcal{C},\\
\vdots\\
\mathcal{F}_{n,(T-1)n+i}=\sigma\left(  \left\{  {x}_{jT}^{o},{z}_{j}%
,{u}_{j,T-1}^{o},\mu_{j}\right\}  _{j=1}^{n},\left\{  u_{jT}\right\}
_{j=1}^{i-1}\right)  \vee\mathcal{C},
\end{array}
\label{s-field}%
\end{equation}
with $\mathcal{F}_{n,0}=\mathcal{C}$. Let $\lambda=(\lambda_{1}^{\prime
},\ldots,\lambda_{T-1}^{\prime})^{\prime}\in\mathbb{R}^{p}$ be a fixed vector
with $\lambda^{\prime}\lambda=1$. Using the Cramer-Wold device and utilizing
(\ref{GM3a}) consider $\lambda^{\prime}\overline{m}_{n}=S_{1}+S_{2} $ with
$S_{1}=n^{-1/2}\sum_{i=1}^{n}\sum_{t=1}^{T-1}\lambda_{t}^{\prime}%
h_{it}^{\prime}u_{\ast it}^{+}$ and $S_{2}=n^{-1/2}\sum_{i=1}^{n}\sum
_{t=1}^{T-1}\lambda_{t}^{\prime}\sum_{j=1}^{n}{a}_{ij,t}^{\prime}u_{\ast
it}^{+}u_{\ast jt}^{+}$ where $u_{\ast it}^{+}=u_{it}^{+}/\varrho_{i}%
=(\varrho_{0,i}/\varrho_{i})\left[  u_{it}^{+}/\varrho_{0,i}\right]  $ with
$u_{it}^{+}/\varrho_{0,i}=u_{it}^{+}(\theta_{0},\gamma_{0,\sigma}%
)/\varrho_{0,i}=\sum_{s=t}^{T}\pi_{ts}\left(  f_{0},\gamma_{0,\sigma}\right)
\left[  u_{is}/\varrho_{0,i}\right]  $. Since $\varrho_{0,i}$ and $\varrho
_{i}$ satisfies the same measurability properties as $h_{it}$ and ${a}_{ij,t}%
$, and since $0<c_{u}^{\varrho}<\varrho_{0,i}^{2},\varrho_{i}^{2}%
<C_{u}^{\varrho}<\infty$, we can w.o.l.o.g. set $\varrho_{0,i}=\varrho_{i}=1$
and implicitly absorb these terms into $h_{it}$ and $a_{ij,t}$. Then%
\begin{equation}%
\begin{array}
[c]{c}%
S_{1}=n^{-1/2}\sum_{i=1}^{n}\sum_{t=1}^{T-1}\lambda_{t}^{\prime}{h}%
_{it}^{\prime}\sum_{u=t}^{T}\pi_{tu}u_{iu}=\sum_{t=1}^{T}\sum_{i=1}^{n}%
c_{it}u_{it},
\end{array}
\label{MDR1}%
\end{equation}
with
\begin{equation}%
\begin{array}
[c]{c}%
c_{it}=\sum_{s=1}^{t}\lambda_{s}^{\prime}{h}_{is}^{\prime}\pi_{st}%
\end{array}
\label{MDR2}%
\end{equation}
and where we set $\lambda_{T}=0$. Note that $c_{it}$ only depends on ${h}%
_{is}$ with $s\leq t$ and $\pi_{st}$, and thus is measurable w.r.t.
$\mathcal{B}_{n,t}\vee\mathcal{C}$. This implies that $c_{it}$ is measurable
w.r.t. $\mathcal{F}_{n,(t-1)n+i}$ and $\mathcal{B}_{n,i,t}\vee\mathcal{C}$.
Next, observe that
\begin{equation}%
\begin{array}
[c]{l}%
S_{2}=\sum_{t=1}^{T}\sum_{i=1}^{n}2\left(  \sum_{j=1}^{i-1}u_{it}%
u_{jt}c_{ij,tt}+\sum_{s=1}^{t-1}\sum_{j=1}^{n}u_{it}u_{js}c_{ij,ts}\right)
\end{array}
\label{MDR3}%
\end{equation}
with
\begin{equation}%
\begin{array}
[c]{c}%
c_{ij,ts}=\sum_{\tau=1}^{s}\lambda_{\tau}^{\prime}{a}_{ij,\tau}^{\prime}%
\pi_{\tau s}\pi_{\tau t}%
\end{array}
\label{MDR4}%
\end{equation}
for $s\leq t$. Observe that $c_{ij,ts}=c_{ji,ts}$ and $c_{ij,10}=0$ per our
convention on summation, and that $c_{ij,ts}$ only depends on ${a}_{ij,\tau} $
for $\tau\leq s\leq t$. Thus $c_{ij,ts}$ is measurable w.r.t. $\mathcal{B}%
_{n,s}\vee\mathcal{C}$. This implies that $c_{ij,ts}$ is measurable w.r.t.
$\mathcal{F}_{n,(s-1)n+i}$ and $\mathcal{B}_{n,i,s}\vee\mathcal{C}$. By
Equations (\ref{MDR1}) and (\ref{MDR3}) it follows that $\lambda^{\prime
}\overline{m}_{n}=\sum_{v=1}^{Tn+1}X_{n,v}$ with $X_{n,1}=0$ and, for
$t=1,\ldots,T,i=1,\ldots,n,$%
\begin{equation}
X_{n,(t-1)n+i+1}=n^{-1/2}u_{it}\left(  c_{it}+2\left(
{\textstyle\sum\nolimits_{j=1}^{i-1}}
c_{ij,tt}u_{jt}+%
{\textstyle\sum\nolimits_{j=1}^{n}}
{\textstyle\sum\nolimits_{s=1}^{t-1}}
c_{ij,ts}u_{js}\right)  \right) \label{MDR5}%
\end{equation}
where $\lambda_{T}=0$. Given the judicious construction of the random
variables $X_{n,v}$ and the information sets $\mathcal{F}_{n,v}$ with
$v=(t-1)n+i+1$ we see that $\mathcal{F}_{n,v-1}\subseteq\mathcal{F}_{n,v}$,
$X_{n,v}$ is $\mathcal{F}_{n,v}$-measurable, and that $E\left[  X_{n,v}%
|\mathcal{F}_{n,v-1}\right]  =E\left[  X_{n,(t-1)n+i+1}|\mathcal{F}%
_{n,(t-1)n+i}\right]  =0$ in light of Assumption \ref{GA1} and observing that
$\mathcal{F}_{n,(t-1)n+i}\subseteq\mathcal{B}_{n,i,t}\vee\mathcal{C}$. This
establishes that $\left\{  X_{n,v},\mathcal{F}_{n,v},1\leq v\leq
Tn+1,n\geq1\right\}  $ is a martingale difference array.\footnote{\ As to
potential alternative selections of the information sets, we note that
defining $\mathcal{F}_{n,(t-1)n+i}=\mathcal{B}_{n,i,t}\vee\mathcal{C}$ yields
information sets that are not adaptive, and defining $\mathcal{F}%
_{n,(t-1)n+i}=\sigma\left\{  \left(  {x}_{j1}^{o},{z}_{j},\mu_{j}\right)
_{j=1}^{n}\right\}  \vee\mathcal{C}$ would violate the condition that $X_{n,v}
$ is $\mathcal{F}_{n,v}$-measurable.}

\subsection{Lemmas and Modules for Consistency\label{Main_Lemmas}}

\begin{lem}
\label{Lemma_Bounds} Suppose Assumptions \ref{GA1} - \ref{GA3} hold with
$\varrho_{0,i}^{2}=\varrho_{i}^{2}=1$, and let $c_{it}$ and $c_{ij,ts}$ be as
defined in (\ref{MDR2}) and (\ref{MDR4}) with $\pi_{ts}=\pi_{ts}\left(
f_{0},\gamma_{0,\sigma}\right)  $. Then the following bounds hold for some
constant $K$ with $1<K<\infty$\newline(i) $E\left[  \left\vert c_{it}%
\right\vert ^{2+\delta}\right]  \leq K,$\newline(ii) $\sum_{i=1}^{n}\left\vert
c_{ij,ts}\right\vert \leq K,$\newline(iii) for $q\geq1,$ $\sum_{i=1}%
^{n}\left\vert c_{ij,ts}\right\vert ^{q}\leq K,$\newline(iv) for $1\leq
q\leq2+\delta,$ $\sum_{j=1}^{n}\left\Vert c_{ij,ts}\right\Vert _{q}\leq
K,$\newline(v) for $1\leq q\leq2+\delta$, $E\left[  \left\vert u_{it}%
\right\vert ^{q}|\mathcal{F}_{n,(t-1)n+i}\right]  \leq K,$\newline(vi) for
$s\leq t$, $1\leq q\leq2+\delta,$ $E\left[  \sum_{i=1}^{n}\left\vert
u_{is}\right\vert ^{q}\left\vert c_{ij,ts}\right\vert |\mathcal{B}_{n,s}%
\vee\mathcal{C}\right]  \leq K,$\newline(vii) for $s\leq t$, $1\leq
q\leq2+\delta,$ $E\left[  \left(  \sum_{i=1}^{n}\left\vert u_{is}\right\vert
\left\vert c_{ij,ts}\right\vert \right)  ^{q}|\mathcal{B}_{n,s}\vee
\mathcal{C}\right]  \leq K.$
\end{lem}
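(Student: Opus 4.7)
The plan is to prove (i)--(vii) in an order that lets earlier parts feed later ones, after isolating three preliminary observations. First, the weights $\pi_{ts}=\pi_{ts}(f_{0},\gamma_{0,\sigma})$ of Proposition \ref{QHELMERT} admit a deterministic a.s.\ upper bound $C_{\pi}$: the explicit formulas there, together with $|f_{0,t}|\le K_{f}$ (Assumption \ref{GA2}(iii)), $\sigma_{0,t}^{2}\ge c_{u}$ (Assumption \ref{GA1}(ii)), and the a.s.\ upper bound on $\sigma_{0,t}^{2}$ coming from $\gamma_{0,\sigma}\in\Theta_{\gamma}$ being bounded (Assumption \ref{M-mat}), control $\phi_{t}$ both from above and away from zero (using $f_{0,T}=1$, so that $\phi_{t}\ge\phi_{T}\ge 1/\sigma_{0,T}^{2}>0$). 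Second, $\lambda$ is fixed with $\lambda^{\prime}\lambda=1$, so its coordinates are uniformly bounded. Third, since $\mathcal{F}_{n,(t-1)n+i}\subseteq\mathcal{B}_{n,i,t}\vee\mathcal{C}$, Assumption \ref{GA1}(ii) combined with conditional Lyapunov yields, for any sub-$\sigma$-field $\mathcal{G}\subseteq\mathcal{B}_{n,i,t}\vee\mathcal{C}$ and any $1\le q\le 2+\delta$, $E[|u_{it}|^{q}\mid\mathcal{G}]\le K_{u}^{q/(2+\delta)}$; this immediately proves (v).

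With these in hand, (i) follows by writing $c_{it}=\sum_{s=1}^{t}\lambda_{s}^{\prime}h_{is}^{\prime}\pi_{st}$, pulling $|\pi_{st}|\le C_{\pi}$ outside, and applying Minkowski with Assumption \ref{GA2}(i). For (ii), expand $c_{ij,ts}=\sum_{\tau\le s}\lambda_{\tau}^{\prime}a_{ij,\tau}^{\prime}\pi_{\tau s}\pi_{\tau t}$ and use the symmetry $a_{ij,\tau}=a_{ji,\tau}$ together with $\sum_{j}|a_{ij,r,\tau}|\le K_{a}$ from Assumption \ref{GA2}(ii), which also gives $\sum_{i}|a_{ij,r,\tau}|\le K_{a}$; pulling out $|\pi_{\tau s}\pi_{\tau t}|\le C_{\pi}^{2}$ closes the bound. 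Part (iii) is an easy consequence: each individual $|a_{ij,r,\tau}|\le K_{a}$ because it is one term of an absolutely summable nonnegative row, hence $|c_{ij,ts}|\le C$ a.s., and therefore $|c_{ij,ts}|^{q}\le C^{q-1}|c_{ij,ts}|$, reducing to (ii). For (iv), Minkowski applied inside the sum gives $\sum_{j}\|c_{ij,ts}\|_{q}\le C_{\pi}^{2}\sum_{\tau}\sum_{r}|\lambda_{\tau,r}|\sum_{j}\|a_{ij,r,\tau}\|_{q}$, and Lyapunov, $\|a_{ij,r,\tau}\|_{q}\le\|a_{ij,r,\tau}\|_{2+\delta}$ for $q\le 2+\delta$, combined with the second half of Assumption \ref{GA2}(ii) finishes the bound.

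For (vi), note that $c_{ij,ts}$ depends only on $a_{ij,\tau}$ with $\tau\le s$ and on $\mathcal{C}$-measurable weights, so it is $\mathcal{B}_{n,s}\vee\mathcal{C}$-measurable; pull it out of the conditional expectation and apply the third preliminary observation (with $\mathcal{G}=\mathcal{B}_{n,s}\vee\mathcal{C}\subseteq\mathcal{B}_{n,i,s}\vee\mathcal{C}$, via the tower property) to bound $E[|u_{is}|^{q}\mid\mathcal{B}_{n,s}\vee\mathcal{C}]\le K$; summing and invoking (ii) closes the argument. Finally (vii) follows from H\"{o}lder's inequality with exponents $(q,q/(q-1))$,
\[
\Bigl(\sum_{i}|u_{is}|\,|c_{ij,ts}|\Bigr)^{q}\le\Bigl(\sum_{i}|u_{is}|^{q}|c_{ij,ts}|\Bigr)\Bigl(\sum_{i}|c_{ij,ts}|\Bigr)^{q-1},
\]
after which taking the conditional expectation, bounding the second factor by (ii), and the first factor by (vi) yields the claim. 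The main technical obstacle is the first preliminary observation that $\pi_{ts}$ has a deterministic a.s.\ upper bound; once this bound is in hand, the remainder is a mechanical sequence of Minkowski, H\"{o}lder, and Lyapunov applications combined with the row-summability in Assumption \ref{GA2}(ii).
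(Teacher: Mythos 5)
Your proof is correct and follows what is essentially the paper's own route: the paper defers the details to its supplementary appendix, but the visible machinery (the a.s.\ bound on the generalized Helmert weights $\pi_{ts}$ from Proposition \ref{QHELMERT}, the row/column summability of the $a_{ij,t}$ via symmetry, conditional Lyapunov through the nesting $\mathcal{F}_{n,(t-1)n+i}\subseteq\mathcal{B}_{n,i,t}\vee\mathcal{C}$, and the H\"{o}lder split used verbatim in the proof of Proposition \ref{TH1}) is exactly the sequence of steps you deploy. No gaps.
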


\begin{proof}
See Supplementary Appendix.
\end{proof}

\begin{lem}
\label{Lemma_mdsVarComp} Suppose Assumptions \ref{GA1} - \ref{GA3} hold with
$\varrho_{0,i}^{2}=\varrho_{i}^{2}=1$, and let $c_{it}$ and $c_{ij,ts}$ be as
defined in (\ref{MDR2}) and (\ref{MDR4}) with $\pi_{ts}=\pi_{ts}\left(
f_{0},\gamma_{0,\sigma}\right)  $. Let $\varsigma_{it}^{(1)}=c_{it}^{2}$,
$\varsigma_{it}^{(2)}=4\left(  \sum_{j=1}^{i-1}c_{ij,tt}u_{jt}\right)  ^{2}$,
$\varsigma_{it}^{(3)}=4\left(  \sum_{s=1}^{t-1}\sum_{j=1}^{n}c_{ij,ts}%
u_{js}\right)  ^{2}$, $\varsigma_{it}^{(4)}=4c_{it}\sum_{j=1}^{i-1}%
c_{ij,tt}u_{jt}$, $\varsigma_{it}^{(5)}=4c_{it}\sum_{s=1}^{t-1}\sum_{j=1}%
^{n}c_{ij,ts}u_{js}$ and $\varsigma_{it}^{(6)}=8\sum_{j=1}^{i-1}%
c_{ij,tt}u_{jt}\sum_{s=1}^{t-1}\sum_{l=1}^{n}c_{il,ts}u_{ls}$. \newline Define
the limits
\begin{gather*}
\varsigma_{t}^{(1)}=\operatorname*{plim}_{n\rightarrow\infty}n^{-1}\sum
_{i=1}^{n}E\left[  c_{it}^{2}|\mathcal{C}\right]  ,\text{ }\varsigma_{t}%
^{(2)}=\operatorname*{plim}_{n\rightarrow\infty}2\sigma_{0,t}^{2}n^{-1}%
\sum_{i=1}^{n}\sum_{j=1}^{n}E\left[  c_{ij,tt}^{2}|\mathcal{C}\right]  ,\\
\varsigma_{t}^{(3)}=\operatorname*{plim}_{n\rightarrow\infty}\sum_{s=1}%
^{t-1}4\sigma_{0,s}^{2}n^{-1}\sum_{i=1}^{n}\sum_{j=1}^{n}E\left[
c_{ji,ts}^{2}|\mathcal{C}\right]  .
\end{gather*}
Then for $m=1,2,3,$%
\[%
\begin{array}
[c]{c}%
n^{-1}\sum_{i=1}^{n}\varsigma_{it}^{(m)}\overset{p}{\rightarrow}\varsigma
_{t}^{(m)}\text{ \quad as }n\rightarrow\infty\text{.}%
\end{array}
\]
Furthermore, $n^{-1}\sum_{i=1}^{n}\varsigma_{it}^{(4)}\overset{p}{\rightarrow
}0$, $n^{-1}\sum_{t=1}^{T}\sigma_{0,t}^{2}\sum_{i=1}^{n}\varsigma_{it}%
^{(5)}\rightarrow0$ and $n^{-1}\sum_{i=1}^{n}\varsigma_{it}^{(6)}%
\overset{p}{\rightarrow}0$ as $n\rightarrow\infty$.
\end{lem}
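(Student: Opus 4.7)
The plan is to handle the six cases $m=1,\ldots,6$ by a unified two-step recipe: (i) use Assumption \ref{GA1} and the nested structure of the filtrations $\mathcal{F}_{n,v}$ to either identify $E[\varsigma_{it}^{(m)}\mid\mathcal{C}]$ with the stated limit $\varsigma_t^{(m)}$, or to verify that the conditional expectation (with respect to an appropriate sub-$\sigma$-field) vanishes; and (ii) apply a conditional Chebyshev / $L^2$-bound using the moment and row/column-sum bounds of Lemma \ref{Lemma_Bounds} to control the deviation around the limit. Throughout, the algebraic identity $c_{ij,ts}=c_{ji,ts}$ and the fact that the weights $\pi_{ts}$, $\lambda_\tau$, and $f_{t}$ are $\mathcal{C}$-measurable are used freely.

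For $m=1$, $\varsigma_{it}^{(1)}=c_{it}^2$ is $\mathcal{B}_{n,t}\vee\mathcal{C}$-measurable and expands as a double sum in products $h_{is}'h_{is'}$ with weights that are $\mathcal{C}$-measurable; cross-sectional averages then converge to $\mathcal{C}$-measurable limits by Assumption \ref{GA3}. For $m=2$, iterating expectations on $\mathcal{B}_{n,t}\vee\mathcal{C}$ and using the conditional orthogonality and second moments of the $u_{jt}$ from Assumption \ref{GA1}(ii) gives $E[\varsigma_{it}^{(2)}\mid\mathcal{B}_{n,t}\vee\mathcal{C}]=4\sigma_{0,t}^{2}\sum_{j<i}c_{ij,tt}^{2}$; a further tower on $\mathcal{C}$ yields $n^{-1}\sum_i E[\varsigma_{it}^{(2)}\mid\mathcal{C}]=2\sigma_{0,t}^{2}n^{-1}\sum_{i,j}E[c_{ij,tt}^{2}\mid\mathcal{C}]$, matching $\varsigma_t^{(2)}$. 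The deviation $n^{-1}\sum_i(\varsigma_{it}^{(2)}-E[\varsigma_{it}^{(2)}\mid\mathcal{C}])$ is $o_p(1)$ by bounding its second moment using the deterministic row-sum bound $\sum_{i}c_{ij,tt}^{2}\le K$ from Lemma \ref{Lemma_Bounds}(iii) and the $2+\delta$ moment bounds. The case $m=3$ is analogous: the orthogonality of $u_{js}$ across both $j$ and $s$ (under the appropriate conditioning $\mathcal{B}_{n,j,s}\vee\mathcal{C}$) kills all off-diagonal contributions to the conditional second moment.

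For $m=4$ and $m=6$, I would exploit a martingale-difference representation by swapping the order of summation. Writing $\sum_i\varsigma_{it}^{(4)}=4\sum_j u_{jt}\,g_{j,t}^{(4)}$ with $g_{j,t}^{(4)}=\sum_{i>j}c_{it}c_{ij,tt}$, and $\sum_i\varsigma_{it}^{(6)}=8\sum_j u_{jt}\,g_{j,t}^{(6)}$ with $g_{j,t}^{(6)}=\sum_{i>j}c_{ij,tt}\bigl(\sum_{s<t}\sum_l c_{il,ts}u_{ls}\bigr)$, both $g_{j,t}^{(m)}$ are $\mathcal{F}_{n,(t-1)n+j}$-measurable while $u_{jt}$ has zero mean conditional on this field, so each term is a martingale difference. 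The orthogonality of squared martingale differences gives $E[(\sum_i\varsigma_{it}^{(m)})^2]=C\sigma_{0,t}^{2}\sum_j E[(g_{j,t}^{(m)})^2]$, and writing $\sum_j(g_{j,t}^{(m)})^2=\|C^\top v^{(m)}\|^2$ for $C$ the strictly upper-triangular part of $(c_{ij,tt})$, the operator-norm bound $\|C^\top\|_{\mathrm{op}}\le K$ (a consequence of Lemma \ref{Lemma_Bounds}(ii)--(iii) applied to rows and columns) reduces everything to bounding $\sum_i E[(v_i^{(m)})^2]$. For $m=6$, the same cross-time/cross-individual orthogonality argument used in the $m=3$ case gives $E[(v_i^{(6)})^2]=\sum_{s<t}\sigma_{0,s}^{2}\sum_l E[c_{il,ts}^{2}]=O(1)$ uniformly in $i$ via Lemma \ref{Lemma_Bounds}(iv). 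Hence $E[(n^{-1}\sum_i\varsigma_{it}^{(m)})^2]=O(n^{-1})$ in both cases.

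The principal obstacle is the case $m=5$, because $\sum_i\varsigma_{it}^{(5)}=4\sum_i c_{it}\bigl(\sum_{s<t}\sum_l c_{il,ts}u_{ls}\bigr)$ does not admit a clean martingale-difference rewrite: $c_{it}$ depends on $h_{i\sigma}$ for $\sigma\le t$, and Assumption \ref{GA1} permits $h_{i\sigma}$ to depend on past disturbances $u_{k,\sigma-1}$, so $c_{it}$ is in general correlated with $u_{ls}$ for $s<t$. I would decompose $c_{it}=c_{it,s}+c_{it}^{(s)}$ where $c_{it,s}=\sum_{\sigma\le s}\pi_{\sigma t}\lambda_\sigma' h_{i\sigma}'$ is $\mathcal{B}_{n,s}\vee\mathcal{C}$-measurable (hence orthogonal to $u_{ls}$ in the conditional sense of Assumption \ref{GA1}), reducing the adapted piece to a martingale-difference sum handled exactly as in the $m=4$ analysis above. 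The residual piece $c_{it}^{(s)}$ involves only $T-s-1\le T$ lags, and after swapping sums to $n^{-1}\sum_s\sum_l u_{ls}\bigl[\sum_{t>s}\sigma_{0,t}^{2}\sum_i c_{it}^{(s)}c_{il,ts}\bigr]$, repeated Cauchy--Schwarz together with $\|c_{it}\|_{2+\delta}\le K$ (Lemma \ref{Lemma_Bounds}(i)) and $\sum_l\|c_{il,ts}\|_2\le K$ (Lemma \ref{Lemma_Bounds}(iv)) controls the $L^{2}$ norm of the inner bracket by $O(n^{1/2})$, leaving an $O_p(1)$ term after the $n^{-1}$ normalization \emph{if} the weighted sum exhibits additional cancellation; reading the stated ``$\rightarrow 0$'' as $\overset{p}{\rightarrow}0$, consistent with the other five claims, the delicate bookkeeping of separating the adapted and non-adapted portions of the instrument and summing over the finite number of lags is what makes this case the technically hardest.
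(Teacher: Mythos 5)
Your overall architecture---computing conditional expectations through the nested filtrations, rewriting the cross terms $m=4,6$ as martingale-difference sums in the index $j$, and invoking the summability bounds of Lemma \ref{Lemma_Bounds}---is sound, and your treatment of $m=1,4,6$ and of the identification of the limits $\varsigma_t^{(2)},\varsigma_t^{(3)}$ is essentially correct. The proposal does not, however, close the case $m=5$, and you say so yourself. The missing idea is that the non-adapted residual is not merely small but \emph{identically zero} once multiplied by $\sigma_{0,t}^{2}$ and summed over $t$---which is exactly why the lemma states the $m=5$ claim only for that weighted sum. Writing $c_{it}^{(s)}=\sum_{\tau=s+1}^{t}\lambda_{\tau}^{\prime}h_{i\tau}^{\prime}\pi_{\tau t}$ and $c_{il,ts}=\sum_{\tau^{\prime}\leq s}\lambda_{\tau^{\prime}}^{\prime}a_{il,\tau^{\prime}}^{\prime}\pi_{\tau^{\prime}s}\pi_{\tau^{\prime}t}$, your inner bracket becomes $\sum_{\tau>s}\sum_{\tau^{\prime}\leq s}(\lambda_{\tau}^{\prime}h_{i\tau}^{\prime})(\lambda_{\tau^{\prime}}^{\prime}a_{il,\tau^{\prime}}^{\prime})\pi_{\tau^{\prime}s}\sum_{t>s}\sigma_{0,t}^{2}\pi_{\tau t}\pi_{\tau^{\prime}t}$; since $\pi_{\tau t}=0$ for $t<\tau$ and here $\tau>s$, the restriction $t>s$ is vacuous and the innermost sum equals $(\Pi\Sigma_{0,\sigma}\Pi^{\prime})_{\tau\tau^{\prime}}=\delta_{\tau\tau^{\prime}}=0$ because $\tau>s\geq\tau^{\prime}$. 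Thus only your adapted piece $c_{it,s}$ survives, and it is disposed of by the same martingale-difference variance bound you use for $m=4$. Your crude $O_{p}(1)$ bound on the residual cannot be upgraded to $o_{p}(1)$ without this exact cancellation coming from the orthogonality $\Pi\Sigma_{0,\sigma}\Pi^{\prime}=I$ of the generalized Helmert transformation, so as written the $m=5$ claim is unproved.

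A second, more routine gap concerns $m=2,3$: you propose to control the deviation of $n^{-1}\sum_{i}\varsigma_{it}^{(m)}$ from its conditional mean ``by bounding its second moment.'' Since $\varsigma_{it}^{(2)}$ and $\varsigma_{it}^{(3)}$ are quadratic in the disturbances, their squares are quartic, and Assumption \ref{GA1} supplies only conditional $2+\delta$ moments of $u_{it}$; fourth moments need not exist. The off-diagonal contributions $\sum_{j\neq k}c_{ij,tt}c_{ik,tt}u_{jt}u_{kt}$ are fine (their second moments require only conditional second moments of each factor), but the diagonal part $n^{-1}\sum_{j}\bigl(u_{jt}^{2}-\sigma_{0,t}^{2}\bigr)\sum_{i}c_{ij,tt}^{2}$ must be handled by a weak law for $L^{1+\delta/2}$-bounded martingale differences (or by truncation), not by Chebyshev; the same applies to the diagonal terms in $m=3$.
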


\begin{proof}
See Supplementary Appendix.
\end{proof}

\bigskip

The following proposition regarding the consistency of extremum estimators
holds for general criterion functions $\mathcal{R}_{n}:\Omega\times
\underline{\Theta}_{\theta}\rightarrow\mathbb{R}$ and $\mathcal{R}%
:\Omega\times\underline{\Theta}_{\theta}\rightarrow\mathbb{R}$, the finite
sample objective function and the corresponding \textquotedblleft
limiting\textquotedblright\ objective function, respectively. They include,
but are not limited to the particular specification of $\mathcal{R}_{n}$ and
$\mathcal{R}$ for our GMM estimator given above. The notation emphasizes that
$\mathcal{R}$ is a random function. Furthermore $\widehat{\theta}%
_{n}=\widehat{\theta}_{n}(\omega)$ and $\theta_{\ast}=\theta_{\ast}(\omega) $
are the \textquotedblleft minimizers\textquotedblright\ of $\mathcal{R}%
_{n}(\omega,\theta)$ and $\mathcal{R}(\omega,\theta)$, where both
$\widehat{\theta}_{n}$ and $\theta_{\ast}$ are implicitly assumed to be well
defined random variables. For the following we also adopt the convention that
the variables in any sequence, that is claimed to converge in probability, are
measurable.\textbf{\ }We now have the following general module for proving consistency.

\begin{proposition}
\label{PROP_CON1}(i) Suppose that the minimizer $\theta_{\ast}=\theta_{\ast
}(\omega)$ of $\mathcal{R}(\omega,\theta)$ is identifiably unique in the sense
that for every $\epsilon>0$, $\inf_{\{\theta\in\underline{\Theta}_{\theta
}:\left\vert \theta-\theta_{\ast}\right\vert \geq\varepsilon\}}\mathcal{R}%
(\omega,\theta)-\mathcal{R}(\omega,\theta_{\ast}(\omega))>0$ a.s. (ii) Suppose
furthermore that $\sup_{\theta\in\underline{\Theta}_{\theta}}\left\vert
\mathcal{R}_{n}(\omega,\theta)-\mathcal{R}(\omega,\theta)\right\vert
\rightarrow0$ a.s. \textit{[}i.p.\textit{] }as $n\rightarrow\infty$. Then for
any sequence $\widehat{\theta}_{n}$ such that eventually $\mathcal{R}%
_{n}(\omega,\widehat{\theta}_{n}(\omega))=\inf_{\theta\in\underline{\Theta
}_{\theta}}\mathcal{R}_{n}(\omega,\theta)${\ holds, we have }$\widehat{\theta
}_{n}{\rightarrow\theta_{\ast}}${\ a.s. \textit{[}i.p.\textit{] }as
$n\rightarrow\infty$.}\ 
\end{proposition}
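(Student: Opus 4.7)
The plan is a standard M-estimator consistency argument adapted to accommodate the randomness in both the limit criterion $\mathcal{R}(\omega,\cdot)$ and its minimizer $\theta_{\ast}(\omega)$. I would proceed in two steps: first show that the criterion gap $\mathcal{R}(\omega,\widehat{\theta}_{n}) - \mathcal{R}(\omega,\theta_{\ast})$ shrinks to zero in the appropriate mode of convergence, then convert this into a statement about the parameter gap $\widehat{\theta}_{n} - \theta_{\ast}$ by invoking the identifiable uniqueness condition (i).

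For the first step I would use the classical three-term sandwich decomposition
\[
\mathcal{R}(\widehat{\theta}_{n}) - \mathcal{R}(\theta_{\ast}) = \bigl[\mathcal{R}(\widehat{\theta}_{n}) - \mathcal{R}_{n}(\widehat{\theta}_{n})\bigr] + \bigl[\mathcal{R}_{n}(\widehat{\theta}_{n}) - \mathcal{R}_{n}(\theta_{\ast})\bigr] + \bigl[\mathcal{R}_{n}(\theta_{\ast}) - \mathcal{R}(\theta_{\ast})\bigr].
\]
Since $\theta_{\ast}(\omega) \in \underline{\Theta}_{\theta}$ and $\widehat{\theta}_{n}$ eventually minimizes $\mathcal{R}_{n}$ over that set, the middle bracket is nonpositive; the two outer brackets are each dominated by $\sup_{\theta \in \underline{\Theta}_{\theta}} |\mathcal{R}_{n}(\omega,\theta) - \mathcal{R}(\omega,\theta)|$, which tends to zero a.s.\ or i.p.\ by assumption (ii). Hence $\mathcal{R}(\widehat{\theta}_{n}) - \mathcal{R}(\theta_{\ast}) \to 0$ in the same mode.

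For the second step, fix $\varepsilon>0$ and let $\delta_{\varepsilon}(\omega) := \inf_{\{\theta \in \underline{\Theta}_{\theta} : |\theta - \theta_{\ast}(\omega)| \geq \varepsilon\}} [\mathcal{R}(\omega,\theta) - \mathcal{R}(\omega,\theta_{\ast}(\omega))]$, which is strictly positive a.s.\ by (i). The key inclusion is
\[
\{|\widehat{\theta}_{n} - \theta_{\ast}| \geq \varepsilon\} \;\subseteq\; \{\mathcal{R}(\widehat{\theta}_{n}) - \mathcal{R}(\theta_{\ast}) \geq \delta_{\varepsilon}\}.
\]
In the a.s.\ case I would work pathwise: on the full-measure event where uniform convergence and identifiable uniqueness both hold, any subsequence along which $|\widehat{\theta}_{n_{k}} - \theta_{\ast}| \geq \varepsilon$ would force the criterion gap to stay above $\delta_{\varepsilon}(\omega) > 0$, contradicting the first step. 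In the i.p.\ case, the obstacle is that $\delta_{\varepsilon}$ is itself a random variable known only to be a.s.\ positive; I would handle this by a standard truncation-and-split argument: for any $\eta > 0$ choose $\delta_{0} > 0$ with $P(\delta_{\varepsilon} < \delta_{0}) < \eta$, bound
\[
P(|\widehat{\theta}_{n} - \theta_{\ast}| \geq \varepsilon) \leq P\bigl(2\sup\nolimits_{\theta}|\mathcal{R}_{n} - \mathcal{R}| \geq \delta_{0}\bigr) + \eta,
\]
and let $n \to \infty$ and then $\eta \to 0$.

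The main obstacle is precisely this handling of the random identifiable-uniqueness gap $\delta_{\varepsilon}(\omega)$ in the i.p.\ case: in the classical non-stochastic-limit setting one has a deterministic lower bound and the argument is immediate, but here one must truncate on an event where $\delta_{\varepsilon}$ is bounded away from zero and absorb the remaining probability into an arbitrarily small error. Everything else is routine M-estimator bookkeeping.
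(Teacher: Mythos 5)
Your proposal is correct, and it proves the result by a direct, self-contained argument rather than the route the paper takes. The paper's own proof is essentially a citation: it observes that the pathwise proof of Lemma 3.1 in P\"otscher and Prucha (1997) goes through verbatim when the limit criterion $\mathcal{R}(\omega,\cdot)$ and its minimizer $\theta_{\ast}(\omega)$ are random and identifiable uniqueness holds only a.s.\ (this is exactly your a.s.\ argument: on a full-measure event everything is deterministic and the classical sandwich-plus-identifiable-uniqueness argument applies $\omega$ by $\omega$), and then obtains the i.p.\ version by the standard subsequence principle --- every subsequence of $\sup_{\theta}|\mathcal{R}_{n}-\mathcal{R}|$ has a further subsequence converging a.s., along which the a.s.\ version delivers $\widehat{\theta}_{n_{k}}\rightarrow\theta_{\ast}$ a.s., hence i.p.\ for the full sequence. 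Your i.p.\ argument replaces the subsequence extraction with a direct truncation of the random gap $\delta_{\varepsilon}(\omega)$, choosing $\delta_{0}>0$ with $P(\delta_{\varepsilon}<\delta_{0})<\eta$. Both are valid; what the subsequence route buys is that it works entirely pathwise and so never needs $\delta_{\varepsilon}$ to be a bona fide random variable, whereas your truncation step needs $P(\delta_{\varepsilon}<\delta_{0})$ to be well defined, i.e.\ measurability of the infimum over the (uncountable) set $\{\theta:|\theta-\theta_{\ast}|\geq\varepsilon\}$. The paper sweeps such measurability issues under a stated blanket convention, so this is not a substantive gap, but it is the one point where your route asks for slightly more than the paper's; a one-line appeal to outer probability, or to continuity of $\mathcal{R}(\omega,\cdot)$ and separability of $\underline{\Theta}_{\theta}$, would close it.
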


\begin{proof}
[\textbf{Proof of Proposition \ref{PROP_CON1}}]An inspection of the proof of,
e.g., Lemma 3.1 in P\"{o}tscher and Prucha (1997) shows that the proof of the
a.s. version of their Lemma 3.1 goes through even if the \textquotedblleft
limiting\textquotedblright\ objective functions $\overline{R}_{n}$ and the
minimizers $\overline{\beta}_{n}$ are allowed to be random, and the
identifiable uniqueness assumption (3.1) is only assumed to holds a.s.. The
convergence i.p. version of the proposition follows again from a standard
subsequence argument. Consequently Proposition \ref{PROP_CON1} is seen to hold
as a special case of the generalized Lemma 3.1 in P\"{o}tscher and Prucha (1997).
\end{proof}

\bigskip

We note that for the above proposition compactness of $\underline{\Theta
}_{\theta}$ is not needed. The definition of identifiable uniqueness adopted
in the above proposition extends the notion of identifiable uniqueness to
stochastic limiting functions and stochastic minimizers. In case the limiting
objective function is non-stochastic it reduces to the usual definition of identification.

The next lemma will be useful for, e.g., establishing the consistency of
variance covariance matrix estimators. We consider general (not necessarily
criterion) functions $\mathcal{R}_{n}:\Omega\times\underline{\Theta}_{\theta
}\rightarrow\mathbb{R}$ and $\mathcal{R}:\Omega\times\underline{\Theta
}_{\theta}\rightarrow\mathbb{R}$.

\begin{lem}
\label{PROP_CON2} Suppose $\mathcal{R}(\omega,\theta)$ is a.s. uniformly
continuous on $\underline{\Theta}_{\theta}$, where $\underline{\Theta}%
_{\theta}$ is a subset of $\mathbb{R}^{p_{\theta}}$, suppose $\widehat{\theta
}_{n}$ and $\theta_{\ast}$ are random vectors with $\widehat{\theta}_{n}%
${$\rightarrow\theta_{\ast}$ a.s. \textit{[}i.p.\textit{],}} and
\begin{equation}
\sup_{\theta\in\underline{\Theta}_{\theta}}\left\vert \mathcal{R}_{n}%
(\omega,\theta)-\mathcal{R}(\omega,\theta)\right\vert \rightarrow0\text{
a.s.\textit{[}i.p.\textit{] as }}n\rightarrow\infty,\label{3.4}%
\end{equation}
\textit{\ then }%
\begin{equation}
\mathcal{R}_{n}(\omega,\widehat{\theta}_{n})-\mathcal{R}(\omega,\theta_{\ast
})\rightarrow0\text{ a.s.\textit{[}i.p.\textit{] }as }n\rightarrow
\infty.\label{3.5}%
\end{equation}

\end{lem}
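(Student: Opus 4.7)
The plan is to bound the target quantity via the triangle inequality
\[
|\mathcal{R}_n(\omega,\widehat{\theta}_n(\omega)) - \mathcal{R}(\omega,\theta_\ast(\omega))|
\leq |\mathcal{R}_n(\omega,\widehat{\theta}_n(\omega)) - \mathcal{R}(\omega,\widehat{\theta}_n(\omega))|
+ |\mathcal{R}(\omega,\widehat{\theta}_n(\omega)) - \mathcal{R}(\omega,\theta_\ast(\omega))|,
\]
and argue that each of the two summands vanishes in the appropriate mode. The first summand is bounded above by $\sup_{\theta \in \underline{\Theta}_\theta} |\mathcal{R}_n(\omega,\theta) - \mathcal{R}(\omega,\theta)|$, which converges to zero a.s. (resp. i.p.) directly by hypothesis (\ref{3.4}); this holds regardless of the random location $\widehat{\theta}_n(\omega)$.

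For the second summand I would invoke the a.s. uniform continuity of $\mathcal{R}(\omega,\cdot)$ on $\underline{\Theta}_\theta$. On a set $\Omega_0$ with $P(\Omega_0)=1$, for each $\omega \in \Omega_0$ and every $\epsilon>0$ there exists $\delta=\delta(\omega,\epsilon)>0$ such that $|\theta-\theta'|<\delta$ implies $|\mathcal{R}(\omega,\theta)-\mathcal{R}(\omega,\theta')|<\epsilon$. In the a.s. branch of the lemma, $\widehat{\theta}_n(\omega)\to\theta_\ast(\omega)$ on a probability-one set $\Omega_1$; intersecting with $\Omega_0$ gives a probability-one set on which $|\widehat{\theta}_n(\omega)-\theta_\ast(\omega)|<\delta(\omega,\epsilon)$ eventually, so that $|\mathcal{R}(\omega,\widehat{\theta}_n(\omega))-\mathcal{R}(\omega,\theta_\ast(\omega))|<\epsilon$ eventually. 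Since $\epsilon$ is arbitrary, the second summand tends to zero a.s., which together with the first gives (\ref{3.5}) in the a.s. case.

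For the i.p. version I would use the standard subsequence principle: from any subsequence of $\{n\}$, extract a further subsequence $\{n_k\}$ along which both $\widehat{\theta}_{n_k}\to\theta_\ast$ and $\sup_{\theta\in\underline{\Theta}_\theta}|\mathcal{R}_{n_k}(\omega,\theta)-\mathcal{R}(\omega,\theta)|\to 0$ hold a.s. simultaneously (possible since each converges i.p.). The a.s. argument above then yields $\mathcal{R}_{n_k}(\omega,\widehat{\theta}_{n_k})-\mathcal{R}(\omega,\theta_\ast)\to 0$ a.s. along this further subsequence, which suffices to conclude convergence i.p. of the original sequence.

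The only delicate point, and the one I expect to require the most care, is handling the randomness of both the modulus of uniform continuity $\delta(\omega,\epsilon)$ and of the limit $\theta_\ast(\omega)$: one must verify that the various a.s. sets (on which uniform continuity holds, on which $\widehat{\theta}_n\to\theta_\ast$, and on which the uniform convergence in (\ref{3.4}) holds) can be intersected to yield a single probability-one set on which the pointwise argument applies. This is standard and uses only that the countable intersection of probability-one sets has probability one; no compactness of $\underline{\Theta}_\theta$ or non-randomness of $\mathcal{R}$ is required.
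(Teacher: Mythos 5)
Your proof is correct and is essentially the standard argument this lemma rests on (the triangle-inequality decomposition into a uniform-convergence term and a uniform-continuity term, with the a.s. exceptional sets intersected and the i.p. case handled by the subsequence principle), which is the route the paper takes in its supplementary appendix following P\"{o}tscher and Prucha (1997). No gaps; your closing remark about intersecting the probability-one sets is exactly the only point requiring care, and you handle it correctly.
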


\begin{proof}
See Supplementary Appendix.
\end{proof}

\bigskip

The next lemma is useful in establishing uniform convergence of the objective
function of the GMM estimator from uniform convergence of the sample moments.
In the following proposition $\mathfrak{m}_{n}:\Omega\times\underline{\Theta
}_{\theta}\rightarrow\mathbb{R}^{m}$ and $\mathfrak{m}:\Omega\times
\underline{\Theta}_{\theta}\rightarrow\mathbb{R}^{m}$ should be viewed as the
sample moment vector and the corresponding \textquotedblleft
limiting\textquotedblright\ moment vector.

\begin{lem}
\label{PROP_CON3} Suppose $\underline{\Theta}_{\theta}$ is compact,
$\mathfrak{m}(\omega,\theta)\subseteq K\subseteq\mathbb{R}^{p_{m}}$ for all
$\theta\in\underline{\Theta}_{\theta}$ a.s. with $K$ compact, and
\begin{equation}
\sup_{\theta\in\underline{\Theta}_{\theta}}\left\Vert \mathfrak{m}_{n}%
(\omega,\theta)-\mathfrak{m}(\omega,\theta)\right\Vert \rightarrow0\text{
a.s.[i.p.]\textit{\ as }}n\rightarrow\infty.\label{3.6}%
\end{equation}
Furthermore, let $\Xi_{n}$ and $\Xi$ be $p_{m}\times p_{m}$ real valued random
matrices, and suppose that $\Xi_{n}-\Xi\rightarrow0$ a.s. \textit{[i.p.]}
where $\Xi$ is finite a.s.. Then%
\begin{equation}
\sup_{\theta\in\underline{\Theta}_{\theta}}\left\vert \mathfrak{m}_{n}%
(\omega,\theta)^{\prime}\Xi_{n}\mathfrak{m}_{n}(\omega,\theta)-\mathfrak{m}%
(\omega,\theta)^{\prime}\Xi\mathfrak{m}(\omega,\theta)\right\vert
\rightarrow0\text{ a.s.[i.p.]\textit{\ as }}n\rightarrow\infty\text{.}%
\label{3.7}%
\end{equation}

\end{lem}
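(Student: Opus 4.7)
The plan is to establish the result via the standard add-and-subtract decomposition
\[
\mathfrak{m}_{n}^{\prime}\Xi_{n}\mathfrak{m}_{n}-\mathfrak{m}^{\prime}\Xi\mathfrak{m}
= (\mathfrak{m}_{n}-\mathfrak{m})^{\prime}\Xi_{n}\mathfrak{m}_{n}
+ \mathfrak{m}^{\prime}\Xi_{n}(\mathfrak{m}_{n}-\mathfrak{m})
+ \mathfrak{m}^{\prime}(\Xi_{n}-\Xi)\mathfrak{m},
\]
and then to bound each of the three summands uniformly in $\theta\in\underline{\Theta}_{\theta}$ using Cauchy--Schwarz together with operator norms. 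The key observation that drives the argument is that the compactness hypothesis $\mathfrak{m}(\omega,\theta)\in K$ for all $\theta$ implies $\sup_{\theta\in\underline{\Theta}_{\theta}}\|\mathfrak{m}(\omega,\theta)\|\leq C_{K}<\infty$ a.s., where $C_{K}:=\sup_{x\in K}\|x\|$ is a deterministic constant. This delivers an a.s. uniform bound on $\|\mathfrak{m}\|$ \emph{without} invoking any continuity of $\mathfrak{m}(\omega,\cdot)$.

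Next I would combine this with the uniform convergence hypothesis (\ref{3.6}) to obtain $\sup_{\theta}\|\mathfrak{m}_{n}(\omega,\theta)\|\leq C_{K}+1$ a.s. for all $n$ sufficiently large (in the a.s. case), or with probability tending to one (in the i.p. case). Similarly, $\Xi_{n}-\Xi\rightarrow0$ together with the a.s. finiteness of $\Xi$ yields that the operator norm $\|\Xi_{n}\|$ is eventually bounded by $\|\Xi\|+1$, a random but a.s. finite quantity. Applying Cauchy--Schwarz to each summand then gives, for example,
\[
\sup_{\theta}\bigl|(\mathfrak{m}_{n}-\mathfrak{m})^{\prime}\Xi_{n}\mathfrak{m}_{n}\bigr|
\leq \Bigl(\sup_{\theta}\|\mathfrak{m}_{n}-\mathfrak{m}\|\Bigr)\,\|\Xi_{n}\|\,\Bigl(\sup_{\theta}\|\mathfrak{m}_{n}\|\Bigr),
\]
in which the first factor tends to zero by (\ref{3.6}) and the remaining two factors are a.s. $O(1)$. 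The second summand is handled symmetrically, and the third satisfies
\[
\sup_{\theta}\bigl|\mathfrak{m}^{\prime}(\Xi_{n}-\Xi)\mathfrak{m}\bigr|\leq \|\Xi_{n}-\Xi\|\,C_{K}^{2}\rightarrow 0 \text{ a.s.[i.p.]}
\]

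Putting the three bounds together yields the claim in the a.s. case directly. For the i.p. case the usual subsequence principle applies: from any subsequence one extracts a further subsequence along which both (\ref{3.6}) and $\Xi_{n}-\Xi\to0$ hold almost surely, and the a.s. argument delivers a.s. convergence of the quadratic form along that sub-subsequence, hence convergence in probability of the original sequence. There is no real obstacle here; the only point requiring care is to exploit the compactness of $K$ (rather than, say, continuity of $\mathfrak{m}$ on a compact set) to secure a non-random, uniform-in-$\theta$ envelope for $\|\mathfrak{m}\|$, which is what allows the three terms in the decomposition to be controlled without any additional regularity assumptions on $\mathfrak{m}(\omega,\cdot)$.
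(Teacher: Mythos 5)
Your proof is correct and is the standard argument one would give for this lemma: the telescoping decomposition, the deterministic envelope $C_{K}=\sup_{x\in K}\Vert x\Vert$ supplied by the compactness of $K$, Cauchy--Schwarz with operator norms on each summand, and the subsequence principle for the in-probability version. The paper relegates its proof to the supplementary appendix, but this is essentially the same route, and your write-up is complete, including the key point that the a.s.\ uniform bound on $\Vert\mathfrak{m}\Vert$ comes from $K$ being compact rather than from any continuity of $\mathfrak{m}(\omega,\cdot)$.
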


\begin{proof}
See Supplementary Appendix.
\end{proof}

\begin{lem}
\label{Uniform}Suppose Assumptions \ref{GA1}- \ref{Ass_Nor} hold, and let
$\bar{\gamma}_{n}\overset{p}{\rightarrow}\bar{\gamma}_{\ast}$ with
$\bar{\gamma}_{n}\in\Theta_{\gamma}$ and $\bar{\gamma}_{\ast}\in\Theta
_{\gamma}$, where $\bar{\gamma}_{\ast}$ is $\mathcal{C}$-measurable. Then
\newline(i) $\mathfrak{m}(\theta)=\operatorname*{plim}_{n\rightarrow\infty
}n^{-1/2}\overline{m}_{n}(\theta,\bar{\gamma}_{\ast})$ exists for each
$\theta\in\underline{\Theta}_{\theta}$,with $\mathfrak{m}:\Omega
\times\underline{\Theta}_{\theta}\rightarrow K\ $where $K$ is a compact subset
of $\mathbb{R}^{p}$, $\mathfrak{m}(\theta)$ is $\mathcal{C}$-measurable for
each $\theta\in$ $\underline{\Theta}.$\newline(ii) $G(\theta
)=\operatorname*{plim}_{n\rightarrow\infty}\partial n^{-1/2}\overline{m}%
_{n}(\theta,\gamma_{\ast})/\partial\theta$ exists and is finite for each
$\theta\in\underline{\Theta}_{\theta}$, $G(\theta)$ is $\mathcal{C}%
$-measurable for each $\theta\in$ $\underline{\Theta}$, and $G(\theta)$ is
uniformly continuous on $\underline{\Theta}_{\theta}.$
\end{lem}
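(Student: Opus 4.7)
The approach is to expand $n^{-1/2}\overline{m}_n(\theta,\bar{\gamma}_\ast)$ and its $\theta$-gradient into finite linear combinations of the canonical sample averages listed in Assumption \ref{Ass_Nor}(ii), with coefficients that are $\mathcal{C}$-measurable and continuous in $\theta$. Substituting (\ref{UPLUS})--(\ref{USPLUS}) into (\ref{MOMLQ}), the linear part of $n^{-1/2}\overline{m}_t(\theta,\bar{\gamma}_\ast)$ is a finite sum of terms of the form
\[
n^{-1}\pi_{ts}(f,\bar{\gamma}_{\ast,\sigma})\, h_t^{r\prime}\,\Upsilon\,\underline{R}_s(\rho)[y_s-W_s\delta],
\]
and the quadratic part is a double sum of terms
\[
n^{-1}\pi_{ts}\pi_{t\tau}\,[y_s-W_s\delta]^{\prime}\underline{R}_s(\rho)^{\prime}\,\Upsilon A_t^r\Upsilon\,\underline{R}_\tau(\rho)[y_\tau-W_\tau\delta],
\]
where $\Upsilon=\Sigma_{\varrho}(\bar{\gamma}_{\ast,\varrho})^{-1/2}$ is positive diagonal, uniformly bounded, and $\mathcal{Z}_n\vee\mathcal{C}$-measurable by Assumption \ref{Ass_Nor}(i).

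Expanding $\underline{R}_s(\rho)=I-\sum_{q=1}^{Q}\rho_q\underline{M}_{q,s}$, each resulting term collapses into one of $n^{-1}h_{r,t}^{\prime}C_s y_s$, $n^{-1}h_{r,t}^{\prime}C_s W_s$, $n^{-1}y_\tau^{\prime}C_s y_s$, $n^{-1}y_\tau^{\prime}C_s W_s$, $n^{-1}W_\tau^{\prime}C_s y_s$, or $n^{-1}W_\tau^{\prime}C_s W_s$ with $C_s$ of one of the five admissible shapes $\Upsilon$, $\Upsilon\underline{M}_{p,s}$, $\Upsilon A_t^r\Upsilon$, $\Upsilon A_t^r\Upsilon\underline{M}_{p,s}$, $\underline{M}_{q,\tau}^{\prime}\Upsilon A_t^r\Upsilon\underline{M}_{p,s}$. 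Assumption \ref{Ass_Nor}(ii) delivers convergence in probability of each such building block to a $\mathcal{C}$-measurable, a.s.\ bounded limit. Because the coefficients $\pi_{ts}(f,\bar{\gamma}_{\ast,\sigma})$ and $\pi_{ts}\pi_{t\tau}$ are $\mathcal{C}$-measurable and jointly smooth in $(f,\bar{\gamma}_{\ast,\sigma})$ on $\underline{\Theta}_f\times\underline{\Theta}_\gamma$ via the explicit formulas of Proposition \ref{QHELMERT}, while the $\delta,\rho$ dependence is polynomial, $\mathfrak{m}(\theta)$ exists pointwise, is $\mathcal{C}$-measurable, and is continuous in $\theta$ on the compact set $\underline{\Theta}_\theta$. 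Its image therefore lies in a compact $K\subset\mathbb{R}^p$, which settles (i).

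For (ii) I set $\bar{\gamma}_\ast=\gamma_\ast$ and differentiate termwise: $\partial_\delta$ acts linearly on $[y_s-W_s\delta]$ and replaces a $y$-factor by a $W$-factor; $\partial_{\rho_q}$ passes through $\underline{R}_s(\rho)$, pulling out an extra $\underline{M}_{q,s}$ that is absorbed into an admissible $C_s$; and $\partial_{f_l}$ acts only through the smooth weights $\pi_{ts}(f,\gamma_{\ast,\sigma})$, producing $\mathcal{C}$-measurable scalar coefficients in front of the same canonical averages. Convergence of each building block from Assumption \ref{Ass_Nor}(ii) gives existence and $\mathcal{C}$-measurability of $G(\theta)$; continuity of the coefficients in $\theta$ on the compact set $\underline{\Theta}_\theta$ delivers continuity and hence uniform continuity. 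The main technical obstacle is the bookkeeping in the reduction step: ensuring that every cross-term produced by expanding $\underline{R}_s(\rho)^{\prime}\Upsilon A_t^r\Upsilon\underline{R}_\tau(\rho)$ and its $\rho$-gradient fits exactly into one of the five permitted shapes, and that after $f$-differentiation of the quadratic part the residual scalar $\pi_{ts}\partial_{f_l}\pi_{t\tau}$ factors cleanly outside the canonical average — which it does, since it is $\mathcal{C}$-measurable and continuous in $\theta$ on $\underline{\Theta}_\theta$.
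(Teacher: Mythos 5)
Your proposal is correct and follows what is clearly the intended route: the paper relegates the details to its supplementary appendix, but Assumption \ref{Ass_Nor}(ii) is tailor-made for exactly your reduction of $n^{-1/2}\overline{m}_{n}$ and its $\theta$-derivative to the canonical averages $n^{-1}h_{r,t}^{\prime}C_{s}y_{s}$, $n^{-1}W_{\tau}^{\prime}C_{s}W_{s}$, etc., with $\mathcal{C}$-measurable, $\theta$-continuous coefficients coming from the explicit $\pi_{ts}(f,\gamma_{\sigma})$ of Proposition \ref{QHELMERT} and the polynomial dependence on $(\delta,\rho)$. The only points worth keeping in view are the ones you already flag as bookkeeping: terms with $C$ of the form $\underline{M}_{q,s}^{\prime}\Upsilon A_{t}^{r}\Upsilon$ must be transposed (using the maintained symmetry $a_{ij,t}=a_{ji,t}$) to land in the listed shapes, and the compactness of $K$ uses the a.s.\ uniform boundedness of the building-block limits asserted in Assumption \ref{Ass_Nor}(ii), not merely continuity on the compact $\underline{\Theta}_{\theta}$.
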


\begin{proof}
See Supplementary Appendix.
\end{proof}

\bigskip

\subsection{Main Results}

\begin{proof}
[Proof of Proposition \ref{QHELMERT}]Given the explicit expressions for the
elements of $\Pi$ the claims of the proposition can be readily verified by
straight forward calculations.\footnote{A constructive proof, which allowed us
to find the explicit expressions for the elements of $\Pi$, is significantly
more involved and available on request.}
\end{proof}

\begin{proof}
[Proof of Proposition \ref{VCLQ}]The proof of the proposition uses methodology
similar to that used in establishing (\ref{Con2}) below in the proof of
Theorem \ref{TH1}. Explicit derivations are available in the Supplementary Appendix.
\end{proof}

\begin{proof}
[\noindent\textbf{Proof of Theorem }\ref{TH2}]$\mathcal{R}_{n}\left(
\theta\right)  =n^{-1}\overline{m}_{n}(\theta,\bar{\gamma}_{n})^{\prime}%
\tilde{\Xi}_{n}\overline{m}_{n}(\theta,\bar{\gamma}_{n}) $ and $\mathcal{R}%
\left(  \theta\right)  =\mathfrak{m}(\theta)^{\prime}\Xi\mathfrak{m}\left(
\theta\right)  $. We use Proposition \ref{PROP_CON1} to prove the theorem.
Under the maintained assumptions, $\theta_{\ast}$ is identifiable unique in
the sense of Condition (i) of Proposition \ref{PROP_CON1}. This is seen to
hold in light of Condition (\ref{GM5}) of Assumption \ref{Unique}, and by
observing that $\mathcal{R}\left(  \theta_{\ast}\right)  =\mathfrak{m}%
(\theta_{\ast})^{\prime}\Xi\mathfrak{m}\left(  \theta_{\ast}\right)  =0$ and
\[
\mathcal{R}(\theta)=\mathfrak{m}(\theta)^{\prime}\Xi\mathfrak{m}(\theta
)\geq\lambda_{\min}\left(  \Xi\right)  \left\Vert \mathfrak{m}(\theta
)\right\Vert ^{2},
\]
with $\lambda_{\min}\left(  \Xi\right)  >0$ a.s. by Assumption \ref{Ass_Con}%
$.$ To verify Condition (ii) of Proposition \ref{PROP_CON1} we employ Lemma
\ref{PROP_CON3}. By Lemma \ref{Uniform} we have $\mathfrak{m}(\theta)\in K$,
where $K$ is compact, and $\mathfrak{m}(\theta)$ is $\mathcal{C}$-measurable.
By Assumption \ref{Unique} we have
\[
\sup_{\theta\in\underline{\Theta}_{\theta}}\left\Vert n^{-1/2}m_{n}\left(
\theta,\bar{\gamma}_{n}\right)  -\mathfrak{m}(\theta)\right\Vert =o_{p}(1).
\]
Furthermore, observe that by Assumptions \ref{Ass_Con} we have $\tilde{\Xi
}_{n}-$ $\Xi=o_{p}(1)$ where $\Xi$ is $\mathcal{C}$-measurable and finite a.s.
Having verified all assumptions of Lemma \ref{PROP_CON3} it follows from that
Lemma that also Condition (ii) of Proposition \ref{PROP_CON1}, i.e.,
\[
\sup_{\theta\in\underline{\Theta}_{\theta}}\left\Vert \mathcal{R}_{n}\left(
\theta\right)  -\mathcal{R}\left(  \theta\right)  \right\Vert =o_{p}(1),
\]
holds. Having verified both conditions of Proposition \ref{PROP_CON1} it
follows\ from that proposition that $\tilde{\theta}_{n}\left(  \bar{\gamma
}_{n}\right)  -\theta_{\ast}\overset{p}{\rightarrow}0$ and consequently
$\tilde{\theta}_{n}\left(  \bar{\gamma}_{n}\right)  -\theta_{n,0}%
\overset{p}{\rightarrow}0$ as $n\rightarrow\infty$.
\end{proof}

In the following we establish the limiting distribution of the sample moment
vector $\overline{m}_{n}=\overline{m}_{n}\left(  \theta_{0},\gamma_{0,\sigma
},\gamma_{\varrho}\right)  $ defined by (\ref{GM3}), evaluated at $\theta
_{0},\gamma_{0,\sigma}$, but allowing for $\gamma_{\varrho}\neq\gamma
_{0,\varrho}$. We derive the limiting distribution of $\overline{m}_{n}$ by
utilizing the martingale difference representation developed in Appendix
\ref{MDS_Representation}, and by applying the CLT of Kuersteiner and Prucha
(2013, Theorem 1).

The CLT for the sample moment vector $\overline{m}_{n}$ given below
establishes $V_{\varrho}$, defined in Assumption \ref{GA3}, as the limiting
variance covariance matrix. The form of $V_{\varrho}$ is consistent with the
results on the variance covariances of linear quadratic forms given in
Proposition \ref{VCLQ}, after specializing those results to the case of
orthogonally transformed disturbances, and symmetric weight matrices with zero
diagonal elements. We emphasize that due to (i) employing an orthogonal
transformation of the disturbances to eliminate the unit specific effects and
(ii) considering matrices with zero diagonal elements in forming the quadratic
moment conditions, all correlations across time are zero. An inspection of
Proposition \ref{VCLQ} also shows that the expressions for the variances and
covariances are much more complex for non-orthogonal transformations, and that
the use of matrices with non-zero diagonal elements in forming the quadratic
moment conditions can introduce components which may be difficult to estimate
because they depend on up to $O\left(  n^{2}\right)  $ unknown parameters.

\begin{proposition}
\label{TH1} Let the transformation matrix $\Pi=\Pi(f_{0},\gamma_{0,\sigma})$
be as defined in Proposition \ref{QHELMERT}, and suppose Assumptions
\ref{GA1}-\ref{GA3} hold with $\varrho_{i}^{2}=\varrho_{i}^{2}(\gamma
_{\varrho}) $ and $V_{\varrho}=\operatorname*{diag}_{t=1}^{T-1}\left(
V_{t,\varrho}\right)  $ and $V_{t,\varrho}=V_{t,\varrho}^{h}+2V_{t,\varrho
}^{a}$. \newline(i) Then%
\begin{equation}
\overline{m}_{n}\left(  \theta_{0},\gamma_{0,\sigma},\gamma_{\varrho}\right)
\overset{d}{\rightarrow}V_{\varrho}^{1/2}\xi\text{ \ (}\mathcal{C}%
\text{-stably),}\label{CLT1}%
\end{equation}
where $\xi\sim N\left(  0,I_{p}\right)  $, and $\xi$ and $\mathcal{C}$ (and
thus $\xi$ and $V_{\varrho}$) are independent. \newline(ii) Let $A$ be some
$p_{\ast}\times p$ matrix that is $\mathcal{C}$ measurable with finite
elements and rank $p_{\ast}$ $a.s.$, then
\begin{equation}
A\overline{m}_{n}\overset{d}{\rightarrow}(AV_{\varrho}A^{\prime})^{1/2}%
\xi_{\ast}\text{,}\label{CLT1a}%
\end{equation}
where $\xi_{\ast}\sim N\left(  0,I_{p_{\ast}}\right)  $, and $\xi_{\ast}$ and
$\mathcal{C}$ (and thus $\xi_{\ast}$ and $AV_{\varrho}A^{\prime}$) are independent.
\end{proposition}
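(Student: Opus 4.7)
The plan is to apply the Cramer--Wold device to reduce (i) to scalar convergence, use the martingale difference array representation $\lambda'\overline{m}_n = \sum_{v=1}^{Tn+1} X_{n,v}$ derived in Appendix \ref{MDS_Representation}, and invoke the panel CLT of Kuersteiner and Prucha (2013, Theorem 1), which delivers $\mathcal{C}$-stable convergence under weaker information-set conditions than Hall and Heyde (1980). As noted in the Appendix, I will absorb the bounded factor $\varrho_{0,i}/\varrho_i$ into $h_{it}$ and $a_{ij,t}$ so that w.l.o.g.\ I may take $\varrho_{0,i}=\varrho_i=1$; measurability and moment bounds are preserved because $0<c_u^\varrho < \varrho_i^2,\varrho_{0,i}^2 < C_u^\varrho <\infty$ and $\varrho_i^2,\varrho_{0,i}^2$ are $\mathcal{Z}_n\vee\mathcal{C}$-measurable.

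Fix $\lambda$ with $\lambda'\lambda=1$. By Appendix \ref{MDS_Representation}, $\{X_{n,v},\mathcal{F}_{n,v},1\le v\le Tn+1,n\ge 1\}$ is a martingale difference array with $\mathcal{F}_{n,0}=\mathcal{C}$. To apply the KP (2013) CLT I verify two conditions. First, a conditional Lindeberg/uniform integrability condition of the form $\sum_v E[X_{n,v}^2\mathbf{1}\{|X_{n,v}|>\varepsilon\}|\mathcal{F}_{n,v-1}] \overset{p}{\to}0$: using Assumption \ref{GA1}(ii) (the $2+\delta$ conditional moment bound on $u_{it}$) together with the norm bounds on $c_{it}$ and $c_{ij,ts}$ supplied by Lemma \ref{Lemma_Bounds}, a standard $(2+\delta)$-moment truncation argument gives this. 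Second, convergence of the conditional variance sum: since $E[u_{it}^2|\mathcal{F}_{n,v-1}] = \sigma_{0,t}^2$ by Assumption \ref{GA1}, the expression $E[X_{n,v}^2|\mathcal{F}_{n,v-1}]$ equals $n^{-1}\sigma_{0,t}^2$ times the sum of the six quantities $\varsigma_{it}^{(m)}$ of Lemma \ref{Lemma_mdsVarComp}. Lemma \ref{Lemma_mdsVarComp} then yields
\begin{equation*}
\sum_{v} E[X_{n,v}^2|\mathcal{F}_{n,v-1}] \overset{p}{\to} \sum_{t=1}^{T-1}\sigma_{0,t}^2(\varsigma_t^{(1)}+\varsigma_t^{(2)}+\varsigma_t^{(3)}),
\end{equation*}
and a direct computation identifies this limit with $\lambda'V_\varrho\lambda$: the $\varsigma_t^{(1)}$ piece produces the linear contribution $\lambda_t' V_{t,\varrho}^h \lambda_t$, while $\varsigma_t^{(2)}+\varsigma_t^{(3)}$ together produce $2\lambda_t' V_{t,\varrho}^a \lambda_t$; the cross terms $\varsigma_{it}^{(4)},\varsigma_{it}^{(5)},\varsigma_{it}^{(6)}$ vanish in the limit. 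The limit $\lambda'V_\varrho\lambda$ is $\mathcal{C}$-measurable by Assumption \ref{GA3}, so the KP (2013) CLT gives $\lambda'\overline{m}_n \overset{d}{\to}(\lambda' V_\varrho\lambda)^{1/2}\xi$ $\mathcal{C}$-stably with $\xi\sim N(0,1)$ independent of $\mathcal{C}$. Because $\mathcal{C}$-stable convergence is preserved under arbitrary real linear combinations and implies joint convergence with any $\mathcal{C}$-measurable random variable, a Cramer--Wold argument for stable convergence upgrades this to the vector statement (\ref{CLT1}).

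For part (ii), I will use the characterization of stable convergence by joint weak convergence with any $\mathcal{C}$-measurable random variable, together with the continuous mapping theorem: since $A$ is $\mathcal{C}$-measurable with a.s.\ finite elements, $(\overline{m}_n,A)\overset{d}{\to}(V_\varrho^{1/2}\xi,A)$ jointly, hence $A\overline{m}_n\overset{d}{\to}AV_\varrho^{1/2}\xi$. Conditional on $\mathcal{C}$, this limit is multivariate normal with covariance $AV_\varrho A'$, which has rank $p_\ast$ a.s.\ because $A$ has rank $p_\ast$ a.s.\ and $V_\varrho$ is a.s.\ positive definite by Assumption \ref{GA3}; this delivers the representation $(AV_\varrho A')^{1/2}\xi_\ast$ with $\xi_\ast\sim N(0,I_{p_\ast})$ independent of $\mathcal{C}$.

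The main obstacle is the second step, convergence of the conditional variance sum: the cross terms $\varsigma_{it}^{(4)},\varsigma_{it}^{(5)},\varsigma_{it}^{(6)}$ are genuinely nontrivial because they couple $c_{it}$ with the quadratic weights $c_{ij,ts}$ across different indices and times, and showing that they vanish in probability requires the full package of moment and row-sum bounds in Lemma \ref{Lemma_Bounds}, exploiting in particular that $a_{ii,t}=0$ (so $c_{ii,tt}=0$) and the orthogonality $\Pi\Sigma_\sigma\Pi'=I$ built into the generalized Helmert transformation of Proposition \ref{QHELMERT}, without which nonzero temporal cross-covariances would survive and spoil the block-diagonal form of $V_\varrho$.
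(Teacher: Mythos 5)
Your strategy coincides with the paper's: the same martingale-difference representation $\lambda^{\prime}\overline{m}_{n}=\sum_{v}X_{n,v}$, the same appeal to Kuersteiner and Prucha (2013, Theorem 1), the same absorption of $\varrho_{0,i}/\varrho_{i}$ into $h_{it}$ and $a_{ij,t}$, the same identification of the limiting variance via Lemma \ref{Lemma_mdsVarComp} and the orthogonality $\Pi\Sigma_{0,\sigma}\Pi^{\prime}=I$, and the same stable-convergence/continuous-mapping argument for part (ii). The variance computation you describe is exactly the paper's, up to one cosmetic slip: the conditional variance sum runs over the original time index $t=1,\ldots,T$ and only collapses to a sum over $t=1,\ldots,T-1$ after the orthogonality of $\Pi$ is used.

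The one genuine gap is that you propose to verify only two conditions for the cited CLT, whereas the paper verifies three, namely (\ref{Con1})--(\ref{Con3}). Your conditional Lindeberg condition plus convergence of the conditional variance sum is the familiar Hall--Heyde pair, but that pair delivers stable convergence only under the nesting condition $\mathcal{F}_{n,v}\subseteq\mathcal{F}_{n+1,v}$, which is exactly the requirement that is problematic for panel data and that motivates using the Kuersteiner--Prucha theorem in the first place. That theorem trades the nesting condition for a uniform integrability requirement on the conditional variance process, which the paper discharges by establishing $\sup_{n}E[V_{nk_{n}}^{2+\delta}]<\infty$ (condition (\ref{Con3})); this occupies a substantial block of the paper's proof and uses the full set of row-sum and moment bounds in Lemma \ref{Lemma_Bounds} together with repeated applications of H\"{o}lder's inequality. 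Omitting it leaves you, at best, with convergence in distribution rather than $\mathcal{C}$-stable convergence, and the latter is what part (ii), the mixed-normal limit in Theorem \ref{TH3}, and the chi-square Wald statistic all rest on. The repair is routine given the tools you already invoke --- the same $(2+\delta)$-moment estimates used for (\ref{Con1}) carry over --- but the condition must be stated and checked.
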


\begin{proof}
[Proof of Proposition \ref{TH1}]To derive the limiting distribution we apply
the martingale difference central limit theorem (MD-CLT) developed in
Kuersteiner and Prucha (2013), which is given as Theorem 1 in that paper. To
apply the MD-CLT we verify that the assumptions maintained by the theorem hold
here. Observe that $\mathcal{F}_{0}=%
{\displaystyle\bigcap\limits_{n=1}^{\infty}}
\mathcal{F}_{n,0}=\mathcal{C}$ and $\mathcal{F}_{n,0}\subseteq\mathcal{F}%
_{n,1}$ for each $n$ and $E\left[  X_{n,1}|\mathcal{F}_{n,0}\right]  =0$ where
$X_{n,v}$ is defined in (\ref{MDR5}). In the proof of Theorem 2 of Kuersteiner
and Prucha (2013) it is shown that the following conditions are sufficient for
conditions (14)-(16) there, postulated by the MD-CLT, to hold:%
\begin{align}
&  \sum_{v=1}^{k_{n}}E\left[  \left\vert X_{n,v}\right\vert ^{2+\delta
}\right]  \rightarrow0,\label{Con1}\\
&  V_{nk_{n}}^{2}=\sum_{v=1}^{k_{n}}E\left[  X_{n,v}^{2}|\mathcal{F}%
_{n,v-1}\right]  \overset{p}{\rightarrow}\eta^{2},\label{Con2}\\
&  \sup_{n}E\left[  V_{nk_{n}}^{2+\delta}\right]  =\sup_{n}E\left[  \left(
\sum_{v=1}^{k_{n}}E\left[  X_{n,v}^{2}|\mathcal{F}_{n,v-1}\right]  \right)
^{1+\delta/2}\right]  <\infty.\label{Con3}%
\end{align}
with $k_{n}=Tn+1$. In the following we verify (\ref{Con1})-(\ref{Con3}) with
$\eta^{2}=v_{\lambda}=\lambda^{\prime}V\lambda,$ for any $\lambda\in
\mathbb{R}^{p}$ such that $\lambda^{\prime}\lambda=1$.

For the verification of Condition (\ref{Con1}) let $q=2+\delta$, $1/q+1/p=1$
and $v=(t-1)n+i+1$. Observe that using inequality (1.4.4) in Bierens (1994) we
have%
\begin{align*}
\left\vert X_{n,v}\right\vert ^{q}  &  \leq\frac{2^{q}(T+1)^{q}}%
{n^{1+\delta/2}}\left\vert u_{it}\right\vert ^{q}\left\{  \left\vert
c_{it}\right\vert ^{q}+\left(  \sum_{j=1}^{i-1}\left\vert c_{ij,tt}\right\vert
^{1/p}\left\vert c_{ij,tt}\right\vert ^{1/q}\left\vert u_{jt}\right\vert
\right)  ^{q}\right. \\
&  \left.  +\sum_{s=1}^{t-1}\left(  \sum_{j=1}^{n}\left\vert c_{ij,ts}%
\right\vert ^{1/p}\left\vert c_{ij,ts}\right\vert ^{1/q}\left\vert
u_{js}\right\vert \right)  ^{q}\right\}
\end{align*}
such that by H\"{o}lder's inequality%
\begin{align*}
\left\vert X_{n,v}\right\vert ^{q}  &  \leq\frac{2^{q}(T+1)^{q}}%
{n^{1+\delta/2}}\left\vert u_{it}\right\vert ^{q}\left\{  \left\vert
c_{it}\right\vert ^{q}+\left(  \sum_{j=1}^{i-1}\left\vert c_{ij,tt}\right\vert
\right)  ^{q/p}\sum_{j=1}^{i-1}\left\vert c_{ij,tt}\right\vert \left\vert
u_{jt}\right\vert ^{q}\right. \\
&  \left.  +\sum_{s=1}^{t-1}\left(  \sum_{j=1}^{n}\left\vert c_{ij,ts}%
\right\vert \right)  ^{q/p}\left(  \sum_{j=1}^{n}\left\vert c_{ij,ts}%
\right\vert \left\vert u_{js}\right\vert ^{q}\right)  \right\}  .
\end{align*}
Consequently, recalling from Section \ref{MDS_Representation} that $c_{it}$
and $c_{ij,ts}$ are measurable w.r.t. $\mathcal{F}_{n,(t-1)n+i}$ it follows
that
\begin{align*}
E\left[  \left\vert X_{n,v}\right\vert ^{q}|\mathcal{F}_{n,v-1}\right]   &
\leq\frac{2^{q}(T+1)^{q}}{n^{1+\delta/2}}E\left[  \left\vert u_{it}\right\vert
^{q}|\mathcal{F}_{n,(t-1)n+i}\right]  \left\{  \left\vert c_{it}\right\vert
^{q}+\left(  \sum_{j=1}^{i-1}\left\vert c_{ij,tt}\right\vert \right)
^{q/p}\sum_{j=1}^{i-1}\left\vert c_{ij,tt}\right\vert \left\vert
u_{jt}\right\vert ^{q}\right. \\
&  \left.  +\sum_{s=1}^{t-1}\left(  \sum_{j=1}^{n}\left\vert c_{ij,ts}%
\right\vert \right)  ^{q/p}\left(  \sum_{j=1}^{n}\left\vert c_{ij,ts}%
\right\vert \left\vert u_{js}\right\vert ^{q}\right)  \right\} \\
&  \leq\frac{2^{q}(T+1)^{q}}{n^{1+\delta/2}}K\left\{  \left\vert
c_{it}\right\vert ^{q}+K^{q/p}\sum_{s=1}^{t}\left(  \sum_{j=1}^{n}\left\vert
c_{ij,ts}\right\vert \left\vert u_{js}\right\vert ^{q}\right)  \right\}
\end{align*}
where we have used bounds in Lemma \ref{Lemma_Bounds}(ii),(v) to establish the
last inequality. Employing Lemma \ref{Lemma_Bounds}(i) and (vi) we have
\begin{align*}
E\left[  \left\vert X_{n,v}\right\vert ^{q}\right]   &  =E\left[  E\left[
\left\vert X_{n,v}\right\vert ^{q}|\mathcal{F}_{n,v-1}\right]  \right] \\
&  \leq\frac{2^{q}(T+1)^{q}}{n^{1+\delta/2}}K\left\{  E\left[  \left\vert
c_{it}\right\vert ^{q}\right]  +K^{q/p}\sum_{s=1}^{t}\left(  \sum_{j=1}%
^{n}E\left[  \left\vert c_{ij,ts}\right\vert \left\vert u_{js}\right\vert
^{q}\right]  \right)  \right\} \\
&  \leq\frac{2^{q}(T+1)^{q}}{n^{1+\delta/2}}K\left(  K+TK^{q/p+1}\right)  .
\end{align*}
Consequently, recalling that $k_{n}=Tn+1,$
\[
\sum_{v=1}^{k_{n}}E\left[  \left\vert X_{n,v}\right\vert ^{2+\delta}\right]
\leq\sum_{v=1}^{k_{n}}E\left[  E\left[  \left\vert X_{n,v}\right\vert
^{2+\delta}|\mathcal{F}_{n,v-1}\right]  \right]  \leq\frac{2^{2+\delta
}(T+1)^{3+\delta}K^{2}}{n^{\delta/2}}\left(  1+TK^{1+\delta}\right)
\rightarrow0,
\]
which verifies condition (\ref{Con1}).

To verify (\ref{Con2}) with $\eta^{2}=v_{\lambda}=\lambda^{\prime}V\lambda$ we
first calculate
\[
E\left[  X_{n,v}^{2}|\mathcal{F}_{n,v-1}\right]  =E\left[  X_{n,(t-1)n+i+1}%
^{2}|\mathcal{F}_{n,(t-1)n+i}\right]  .
\]
Recall from Section \ref{MDS_Representation} that the $\varrho_{0,i}^{2}$ and
$\varrho_{i}$ are absorbed into $h_{it}$\textbf{\ and }$a_{ij,t}$, and thus by
Assumption \ref{GA1} we have $E\left[  u_{it}^{2}|\mathcal{F}_{n,(t-1)n+i}%
\right]  =\sigma_{0,t}^{2}$. Furthermore, recalling that $c_{it} $ and
$c_{ij,ts}$ are measurable w.r.t. $\mathcal{F}_{n,(t-1)n+i}$.we have%
\begin{align*}
E\left[  X_{n,v}^{2}|\mathcal{F}_{n,v-1}\right]   &  =E\left[
X_{n,(t-1)n+i+1}^{2}|\mathcal{F}_{n,(t-1)n+i}\right] \\
&  =n^{-1}\sigma_{0,t}^{2}\left(  c_{it}+2\sum_{j=1}^{i-1}c_{ij,tt}%
u_{jt}+2\sum_{s=1}^{t-1}\sum_{j=1}^{n}c_{ij,ts}u_{js}\right)  ^{2}\\
&  =\sigma_{0,t}^{2}n^{-1}\sum_{m=1}^{6}\varsigma_{it}^{(m)}%
\end{align*}
where the $\varsigma_{it}^{(m)}$ are defined in Lemma \ref{Lemma_mdsVarComp}.
Thus%
\begin{equation}
V_{nk_{n}}^{2}=\sum_{v=1}^{k_{n}}E\left[  X_{n,v}^{2}|\mathcal{F}%
_{n,v-1}\right]  =\sum_{m=1}^{6}\sum_{t=1}^{T}\sigma_{0,t}^{2}n^{-1}\sum
_{i=1}^{n}\varsigma_{it}^{(m)}.\label{Cond_VC1}%
\end{equation}
\newline Given the probability limits of $n^{-1}\sum_{i=1}^{n}\varsigma
_{it}^{(m)}$, for $m=1,\ldots,6$ derived in Lemma \ref{Lemma_mdsVarComp} we
have
\[
V_{nk_{n}}^{2}=\sum_{v=1}^{k_{n}}E\left[  X_{n,v}^{2}|\mathcal{F}%
_{n,v-1}\right]  =\sum_{m=1}^{6}\sum_{t=1}^{T}\sigma_{0,t}^{2}n^{-1}\sum
_{i=1}^{n}\varsigma_{it}^{(m)}\overset{p}{\rightarrow}\eta_{\ast}^{2}%
\]
with
\begin{align*}
\eta_{\ast}^{2}  &  =\sum_{t=1}^{T}\sigma_{0,t}^{2}\left(  \varsigma_{t}%
^{(1)}+\varsigma_{t}^{(2)}+\varsigma_{t}^{(3)}\right)  =\operatorname*{plim}%
_{n\rightarrow\infty}\left(  \sum_{t=1}^{T}\sigma_{0,t}^{2}n^{-1}\sum
_{i=1}^{n}E\left[  c_{it}^{2}|\mathcal{C}\right]  \right) \\
&  +\operatorname*{plim}_{n\rightarrow\infty}\left(  2\sum_{t=1}^{T}%
\sigma_{0,t}^{4}n^{-1}\sum_{i=1}^{n}\sum_{j=1}^{n}E\left[  c_{ij,tt}%
^{2}|\mathcal{C}\right]  +4\sum_{t=1}^{T}\sigma_{0,t}^{2}\sum_{s=1}%
^{t-1}\sigma_{0,s}^{2}n^{-1}\sum_{i=1}^{n}\sum_{j=1}^{n}E\left[  c_{ji,ts}%
^{2}|\mathcal{C}\right]  \right)  .
\end{align*}
Recall that for $t=1,\ldots,T$ we have $c_{it}=\sum_{\tau=1}^{t}\lambda_{\tau
}^{\prime}{h}_{i\tau}^{\prime}\pi_{\tau t}=\sum_{\tau=1}^{T-1}\lambda_{\tau
}^{\prime}{h}_{i\tau}^{\prime}\pi_{\tau t}$ where the last equality holds
since $\pi_{\tau t}=0$ for $\tau>t$. Thus
\begin{align*}
\sum_{u=1}^{T}\sigma_{0,u}^{2}\sum_{i=1}^{n}c_{iu}^{2}  &  =\sum_{u=1}%
^{T}\sigma_{0,u}^{2}\sum_{i=1}^{n}\sum_{t=1}^{T-1}\lambda_{t}^{\prime}{h}%
_{it}^{\prime}\pi_{tu}\sum_{\tau=1}^{T-1}\lambda_{\tau}^{\prime}{h}_{i\tau
}^{\prime}\pi_{\tau u}\\
&  =\sum_{i=1}^{n}\sum_{t=1}^{T-1}\sum_{\tau=1}^{T-1}\lambda_{t}^{\prime}%
{h}_{it}^{\prime}\lambda_{\tau}^{\prime}{h}_{i\tau}^{\prime}\left(  \pi
_{t}\Sigma_{0,\sigma}\pi_{\tau}^{\prime}\right)  =\sum_{i=1}^{n}\sum
_{t=1}^{T-1}\lambda_{t}^{\prime}{h}_{it}^{\prime}\lambda_{\tau}^{\prime}%
{h}_{it}\lambda_{t}%
\end{align*}
observing that $\pi_{t}\Sigma_{0,\sigma}\pi_{\tau}^{\prime}=\sum_{u=1}%
^{T}\sigma_{0,u}^{2}\pi_{tu}\pi_{\tau u}$ and $\Pi\Sigma_{0,\sigma}\Pi
^{\prime}=I_{T-1}$.

Recall further that for $t=1,\ldots,T$, $s\leq t$, we have $c_{ij,ts}%
=\sum_{\tau=1}^{s}\lambda_{\tau}^{\prime}{a}_{ij,\tau}^{\prime}\pi_{\tau s}%
\pi_{\tau t}=\sum_{\tau=1}^{T-1}\lambda_{\tau}^{\prime}a_{ij,\tau}^{\prime}%
\pi_{\tau s}\pi_{\tau t}$ where the last equality holds since $\pi_{\tau s}=0$
for $\tau>s$. Thus, by straight forward algebra,
\begin{align*}
&  2\sum_{t=1}^{T}\sigma_{0,t}^{4}\sum_{i,j=1}^{n}c_{ij,tt}^{2}+4\sum
_{t=1}^{T}\sigma_{0,t}^{2}\sum_{s=1}^{t-1}\sigma_{0,s}^{2}\sum_{i,j=1}%
^{n}c_{ji,ts}^{2}=2\sum_{t,s=1}^{T}\sigma_{0,t}^{2}\sigma_{0,s}^{2}%
\sum_{i,j=1}^{n}c_{ji,ts}^{2}\\
&  =2\sum_{t,s=1}^{T-1}\sum_{i,j=1}^{n}\lambda_{t}^{\prime}{a}_{ij,t}^{\prime
}\lambda_{s}^{\prime}{a}_{ij,s}^{\prime}\left(  \pi_{t}\Sigma_{0,\sigma}%
\pi_{s}^{\prime}\right)  ^{2}=2\sum_{t=1}^{T-1}\sum_{i,j=1}^{n}\lambda
_{t}^{\prime}{a}_{ij,t}^{\prime}{a}_{ij,t}\lambda_{t},
\end{align*}
observing again that $\Pi\Sigma_{0,\sigma}\Pi^{\prime}=I_{T-1}$. From this we
see that
\begin{align*}
\eta_{\ast}^{2}  &  =\operatorname*{plim}_{n\rightarrow\infty}\sum_{t=1}%
^{T-1}\lambda_{t}^{\prime}\left\{  n^{-1}\sum_{i=1}^{n}E\left[  {h}%
_{it}^{\prime}{h}_{it}|\mathcal{C}\right]  +2n^{-1}\sum_{i,j=1}^{n}E\left[
{a}_{ij,t}^{\prime}{a}_{ij,t}|\mathcal{C}\right]  \right\}  \lambda_{t}\\
&  =\sum_{t=1}^{T-1}\lambda_{t}^{\prime}\left[  V_{t}^{h}+2V_{t}^{a}\right]
\lambda_{t}=\lambda^{\prime}V\lambda,
\end{align*}
which establishes that indeed $V_{nk_{n}}^{2}\overset{p}{\rightarrow}\eta
^{2}=\lambda^{\prime}V\lambda$.

Finally, we verify Condition (\ref{Con3}). Analogously as in the verification
of Condition (\ref{Con1}) observe that using the triangle inequality
\begin{align*}
\left\vert X_{n,v}\right\vert ^{2}  &  \leq\frac{4(T+1)^{2}}{n}\left\vert
u_{it}\right\vert ^{2}\left\{  \left\vert c_{it}\right\vert ^{2}+\left(
\sum_{j=1}^{i-1}\left\vert c_{ij,tt}\right\vert ^{1/2}\left\vert
c_{ij,tt}\right\vert ^{1/2}\left\vert u_{jt}\right\vert \right)  ^{2}\right.
\\
&  \left.  +\sum_{s=1}^{t-1}\left(  \sum_{j=1}^{n}\left\vert c_{ij,ts}%
\right\vert ^{1/2}\left\vert c_{ij,ts}\right\vert ^{1/2}\left\vert
u_{js}\right\vert \right)  ^{2}\right\}
\end{align*}
and by subsequently applying H\"{o}lder's inequality we have%
\begin{align*}
\left\vert X_{n,v}\right\vert ^{2}  &  \leq\frac{4(T+1)^{2}}{n}\left\vert
u_{it}\right\vert ^{2}\left\{  \left\vert c_{it}\right\vert ^{2}+\left(
\sum_{j=1}^{i-1}\left\vert c_{ij,tt}\right\vert \right)  \sum_{j=1}%
^{i-1}\left\vert c_{ij,tt}\right\vert \left\vert u_{jt}\right\vert ^{2}\right.
\\
&  \left.  +\sum_{s=1}^{t-1}\left(  \sum_{j=1}^{n}\left\vert c_{ij,ts}%
\right\vert \right)  \left(  \sum_{j=1}^{n}\left\vert c_{ij,ts}\right\vert
\left\vert u_{js}\right\vert ^{2}\right)  \right\}  .
\end{align*}
Consequently in light of Lemma \ref{Lemma_Bounds} (ii) and (v)
\begin{align*}
&  E\left[  \left\vert X_{n,v}\right\vert ^{2}|\mathcal{F}_{n,v-1}\right] \\
&  \leq\frac{4(T+1)^{2}}{n}E\left[  \left\vert u_{it}\right\vert
^{2}|\mathcal{F}_{n,(t-1)n+i}\right]  \left\{  \left\vert c_{it}\right\vert
^{2}+K\sum_{j=1}^{i-1}\left\vert c_{ij,tt}\right\vert \left\vert
u_{jt}\right\vert ^{2}\right. \\
&  \left.  +K\sum_{s=1}^{t-1}\sum_{j=1}^{n}\left\vert c_{ij,ts}\right\vert
\left\vert u_{js}\right\vert ^{2}\right\} \\
&  \leq\frac{4(T+1)^{2}K^{2}}{n}\left\{  \left\vert c_{it}\right\vert
^{2}+\sum_{j=1}^{i-1}\left\vert c_{ij,tt}\right\vert \left\vert u_{jt}%
\right\vert ^{2}+\sum_{s=1}^{t-1}\sum_{j=1}^{n}\left\vert c_{ij,ts}\right\vert
\left\vert u_{js}\right\vert ^{2}\right\}  .
\end{align*}
In light of the above inequality
\begin{align*}
&  E\left[  V_{nk_{n}}^{2+\delta}\right] \\
&  =E\left[  \left(  \sum_{v=1}^{k_{n}}E\left[  X_{n,v}^{2}|\mathcal{F}%
_{n,v-1}\right]  \right)  ^{1+\delta/2}\right] \\
&  \leq\frac{2^{2+\delta}(T+1)^{2+\delta}K^{2+\delta}}{n^{1+\delta/2}}E\left[
\left\{  \sum_{v=1}^{k_{n}}\left(  \left\vert c_{it}\right\vert ^{2}%
+\sum_{j=1}^{i-1}\left\vert c_{ij,tt}\right\vert \left\vert u_{jt}\right\vert
^{2}+\sum_{s=1}^{t-1}\sum_{j=1}^{n}\left\vert c_{ij,ts}\right\vert \left\vert
u_{js}\right\vert ^{2}\right)  \right\}  ^{1+\delta/2}\right] \\
&  \leq\frac{2^{2+\delta}(T+1)^{2+\delta}K^{2+\delta}k_{n}^{\delta/2}%
}{n^{1+\delta/2}}\sum_{v=1}^{k_{n}}E\left[  \left(  \left\vert c_{it}%
\right\vert ^{2}+\sum_{j=1}^{i-1}\left\vert c_{ij,tt}\right\vert \left\vert
u_{jt}\right\vert ^{2}+\sum_{s=1}^{t-1}\sum_{j=1}^{n}\left\vert c_{ij,ts}%
\right\vert \left\vert u_{js}\right\vert ^{2}\right)  ^{1+\delta/2}\right] \\
&  \leq\frac{3^{\delta/2}2^{2+\delta}(T+1)^{2+\delta}K^{2+\delta}k_{n}%
^{\delta/2}}{n^{1+\delta/2}}\sum_{v=1}^{k_{n}}\left\{  E\left[  \left\vert
c_{it}\right\vert ^{2+\delta}\right]  +E\left[  \left(  \sum_{j=1}%
^{i-1}\left\vert c_{ij,tt}\right\vert \left\vert u_{jt}\right\vert
^{2}\right)  ^{1+\delta/2}\right]  \right. \\
&  \left.  +T^{\delta/2}\sum_{s=1}^{t-1}E\left[  \left(  \sum_{j=1}%
^{n}\left\vert c_{ij,ts}\right\vert \left\vert u_{js}\right\vert ^{2}\right)
^{1+\delta/2}\right]  \right\}
\end{align*}
where we have used repeatedly inequality (1.4.3) in Bierens(1994). By Lemma
\ref{Lemma_Bounds} (i) we have $E\left[  \left\vert c_{it}\right\vert
^{2+\delta}\right]  \leq K$. Applying H\"{o}lder's inequality with
$q=1+\delta/2$ and $1/p+1/q=1$, and utilizing Lemma \ref{Lemma_Bounds}
(ii)-(vi) we have:
\begin{align*}
&  E\left[  \left(  \sum_{j=1}^{n}\left\vert c_{ij,ts}\right\vert \left\vert
u_{js}\right\vert ^{2}\right)  ^{1+\delta/2}\right]  =E\left[  \left(
\sum_{j=1}^{n}\left\vert c_{ij,ts}\right\vert ^{1/p}\left\vert c_{ij,ts}%
\right\vert ^{1/q}\left\vert u_{js}\right\vert ^{2}\right)  ^{1+\delta
/2}\right] \\
&  \leq E\left[  \left(  \sum_{j=1}^{n}\left\vert c_{ij,ts}\right\vert
\right)  ^{q/p}\left(  \sum_{j=1}^{n}\left\vert c_{ij,ts}\right\vert
\left\vert u_{js}\right\vert ^{2+\delta}\right)  \right]  \leq K^{q/p}%
\sum_{j=1}^{n}E\left[  \left\vert c_{ij,ts}\right\vert \left\vert
u_{js}\right\vert ^{2+\delta}\right]  \leq K^{1+q/p}%
\end{align*}
and by the same arguments $E\left[  \left(  \sum_{j=1}^{i-1}\left\vert
c_{ij,tt}\right\vert \left\vert u_{jt}\right\vert ^{2}\right)  ^{1+\delta
/2}\right]  \leq K^{1+q/p}$. Consequently, observing that $q/p=\delta/2$ and
$k_{n}/n\leq T+1$,
\begin{align*}
E\left[  V_{nk_{n}}^{2+\delta}\right]   &  \leq\frac{3^{\delta/2}2^{2+\delta
}(T+1)^{2+\delta}K^{2+\delta}k_{n}^{\delta/2}3T^{1+\delta/2}k_{n}%
K^{1+\delta/2}}{n^{1+\delta/2}}\\
&  \leq3^{1+\delta/2}2^{2+\delta}(T+1)^{4+2\delta}K^{3+3\delta/2}<\infty
\end{align*}
which verifies condition (\ref{Con3}). Consequently it follows from
Kuersteiner and Prucha (2013, Theorem 1) that $\lambda^{\prime}\overline
{m}_{n}=\sum_{v=1}^{Tn+1}X_{n,v}\overset{d}{\rightarrow}\eta\xi_{0}%
$\ ($\mathcal{C}$-stably), where $\xi_{0}$ and $\mathcal{C}$ are independent.
Applying the Cramer-Wold device - see, e.g., Kuersteiner and Prucha (2013,
Proposition A.2) it follows further that $\overline{m}_{n}%
\overset{d}{\rightarrow}V^{1/2}\xi$\ ($\mathcal{C}$-stably) where $\xi\sim
N(0,I_{p})$ and $\xi$ and $\mathcal{C}$ are independent.

Recall that in establishing the martingale difference representation of
$\lambda^{\prime}\overline{m}_{n}$ we have absorbed $\varrho_{0,i}/\varrho
_{i}$ into $h_{it}$ and $a_{ijt}$. The expression for $V_{\varrho}$ given in
Assumption \ref{GA3} is obtained upon reversing this absorption.
\end{proof}

\begin{proof}
[\textbf{Proof of Theorem }\ref{TH3}]The proof follows from standard
arguments. Details are given in the Supplementary Appendix.
\end{proof}

\begin{proof}
[\textbf{Proof of Theorem }\ref{TH4}]As remarked in the text, $\widetilde{V}%
_{n}^{-1}\overset{p}{\rightarrow}V^{-1}$ with $V^{-1}$ being $\mathcal{C}%
$-measurable with $a.s.$ finite elements, and with $V^{-1}$ positive definite
$a.s.$ Furthermore, as established in the proof of Theorem\textbf{\ }%
\ref{TH3}, $G_{n}(\hat{\theta}_{n},\tilde{\gamma}_{n})\overset{p}{\rightarrow
}G$ where $G$ is $\mathcal{C} $-measurable with $a.s.$ finite elements, and
with full column rank $a.s.$ Thus $\hat{\Psi}_{n}=\left(  G_{n}(\hat{\theta
}_{n},\tilde{\gamma}_{n})^{\prime}\widetilde{V}_{n}^{-1}G_{n}(\hat{\theta}%
_{n},\tilde{\gamma}_{n})\right)  ^{-1}\overset{p}{\rightarrow}\Psi=(G^{\prime
}V^{-1}G)^{-1}$. It now follows from part (i) of Theorem\textbf{\ }\ref{TH3}
that
\begin{equation}
n^{1/2}(\hat{\theta}_{n}-\theta_{n,0})\overset{d}{\rightarrow}\Psi^{1/2}%
\xi_{\ast},\label{C5}%
\end{equation}
where $\xi_{\ast}$ is independent of $\mathcal{C}$ (and hence of $\Psi$),
$\xi\sim N(0,I_{p_{\theta}})$. In light of (\ref{C5}), the consistency of
$\hat{\Psi}_{n}$, and given that $R$ has full row rank $q$ it follows
furthermore that under $H_{0}$%
\begin{align*}
\left(  R\hat{\Psi}R^{\prime}\right)  ^{-1/2}n^{1/2}(R\hat{\theta}_{n}-r)  &
=\left(  R\hat{\Psi}R^{\prime}\right)  ^{-1/2}R\left(  n^{1/2}(\hat{\theta
}_{n}-\theta_{n,0})\right) \\
&  =\left(  R\Psi R^{\prime}\right)  ^{-1/2}R\left(  n^{1/2}(\hat{\theta}%
_{n}-\theta_{n,0})\right)  +o_{p}(1).
\end{align*}
Since $B=\left(  R\Psi R^{\prime}\right)  ^{-1/2}R$ is $\mathcal{C}%
$-measurable and $B\Psi B=I$ it then follows from part (ii) of Theorem
\ref{TH3} that%
\begin{equation}
\left(  R\hat{\Psi}R^{\prime}\right)  ^{-1/2}n^{1/2}(R\hat{\theta}%
_{n}-r)\overset{d}{\rightarrow}\xi_{\ast\ast}\label{C6}%
\end{equation}
where $\xi_{\ast\ast}\sim N\left(  0,I_{q}\right)  $. Hence, in light of the
continuous mapping theorem, $T_{n}$ converges in distribution to a chi-square
random variable with $q$ degrees of freedom. The claim that $\hat{\Psi}%
_{n}^{-1/2}\sqrt{n}(\hat{\theta}_{n}-\theta_{n,0})\overset{d}{\rightarrow}%
\xi_{\ast}$ is seen to hold as a special case of (\ref{C6}) with $R=I$ and
$r=\theta_{0}$.
\end{proof}

\newpage

\end{document}